\numberwithin{equation}{section}
\newcolumntype{C}{>{$}c<{$}} 
\theoremstyle:=definition,remark,plain\do{%
    \expandafter\g@addto@macro\csname th@\theoremstyle\endcsname{%
      \addtolength\thm@preskip{.5\baselineskip plus .2\baselineskip minus .2\baselineskip}
      \addtolength\thm@postskip{.5\baselineskip plus .2\baselineskip minus .2\baselineskip}
    }%
  }
\tikzstyle{wt}=[circle, fill=black, inner sep=1.2pt, outer sep=0pt, minimum size=2pt]  
\theoremstyle{definition}
\newtheorem*{definition}{Definition}
\theoremstyle{plain}
\newtheorem{mtheorem}{Main Theorem}
\newtheorem{theorem}{Theorem}[section]
\newtheorem{lemma}[theorem]{Lemma}
\newtheorem{proposition}[theorem]{Proposition}
\newtheorem*{remark}{Remark}
\newtheorem*{example}{Example}
\renewcommand{\ge}{\geqslant} 
\renewcommand{\le}{\leqslant} 
\newcommand{\cc}{\mathsf{c}}   
\newcommand{\ii}{\mathfrak{i}} 
\newcommand{\kk}{\mathsf{k}}   
\newcommand{\qq}{\mathsf{q}}   
\newcommand{\zz}{\mathsf{z}}   
\DeclareMathOperator{\res}{Res}
\DeclarePairedDelimiter{\brac}{\lparen}{\rparen}   
\DeclarePairedDelimiter{\sqbrac}{\lbrack}{\rbrack} 
\DeclarePairedDelimiter{\set}{\lbrace}{\rbrace}
\newcommand{\st}{\mspace{5mu} {:} \mspace{5mu}}    
\DeclarePairedDelimiter{\abs}{\lvert}{\rvert}
\DeclarePairedDelimiter{\norm}{\lVert}{\rVert}
\DeclarePairedDelimiter{\ang}{\langle}{\rangle}    
\DeclarePairedDelimiterX{\comm}[2]{\lbrack}{\rbrack}{#1 , #2}  
\DeclarePairedDelimiterX{\acomm}[2]{\lbrace}{\rbrace}{#1 , #2} 
\DeclarePairedDelimiterX{\inner}[2]{\langle}{\rangle}{#1 , #2} 
\DeclarePairedDelimiterX{\super}[2]{\lparen}{\rparen}{#1 \delimsize\vert \mathopen{} #2} 
\newcommand{\Ra}{\Rightarrow}
\newcommand{\lra}{\longrightarrow}
\newcommand{\ira}{\hookrightarrow}    
\newcommand{\lira}{\ensuremath{\lhook\joinrel\relbar\joinrel\rightarrow}} 
\newcommand{\dses}[3]{0 \lra #1 \lra #2 \lra #3 \lra 0} 
\newcommand{\categ}[1]{\mathscr{#1}} 
\newcommand{\fld}[1]{\mathbb{#1}}    
\newcommand{\ZZ}{\fld{Z}}
\newcommand{\RR}{\fld{R}}
\newcommand{\CC}{\fld{C}}
\newcommand{\grp}[1]{\mathsf{#1}}              
\newcommand{\SLG}[2]{\grp{#1}_{#2}}            
\newcommand{\alg}[1]{\mathfrak{#1}}                      
\newcommand{\finite}[1]{#1}                              
\newcommand{\affine}[1]{\widehat{#1}}
\newcommand{\SLA}[2]{\finite{\alg{#1}}{}_{#2}}           
\newcommand{\SLSA}[3]{\finite{\alg{#1}} \super{#2}{#3}}  
\newcommand{\AKMA}[2]{\affine{\alg{#1}}{}_{#2}}          
\newcommand{\AKMSA}[3]{\affine{\alg{#1}} \super{#2}{#3}} 
\newcommand{\envalg}[1]{\mathsf{U}\brac{#1}}                
\newcommand{\envalgk}[1]{\mathsf{U}_{\kk} \brac{#1}}        
\newcommand{\ideal}[1]{\ang*{#1}}
\newcommand{\wun}{\vvmathbb{1}}                             
\newcommand{\g}{\finite{\alg{g}}} 
\newcommand{\h}{\finite{\alg{h}}}                                 
\newcommand{\ag}{\affine{\g}}                                     
\newcommand{\ah}{\affine{\h}}                                     
\newcommand{\sltwo}{\SLA{sl}{2}}
\newcommand{\asltwo}{\AKMA{sl}{2}}
\newcommand{\osp}{\SLSA{osp}{1}{2}}
\newcommand{\aosp}{\AKMSA{osp}{1}{2}}
\newcommand{\raosp}{{}^{\R} \AKMSA{osp}{1}{2}}
\DeclareMathOperator{\ad}{ad}                   
\newcommand{\cas}{Q}                            
\newcommand{\scas}{\Sigma}                      
\newcommand{\wlat}{\grp{P}}                     
\newcommand{\rlat}{\grp{Q}}                     
\newcommand{\sroot}{\alpha}                     
\newcommand{\fwt}{\omega}                       
\newcommand{\wvec}{\rho}                        
\newcommand{\killing}[2]{\kappa \brac{#1 , #2}} 
\newcommand{\bilin}[2]{\brac{#1 , #2}}          
\newcommand{\cox}{\mathsf{h}}                   
\newcommand{\dcox}{\cox^{\vee}}                 
\newcommand{\Mod}[1]{\mathcal{#1}}   
\newcommand{\aMod}[1]{\affine{\mathcal{#1}}}
\newcommand{\conj}{\mathsf{w}}       
\newcommand{\rel}[1]{\affine{\RelVerSymb}_{#1}}       
\newcommand{\nsimpset}[1]{\Lambda(#1)}                
\newcommand{\nsimpseto}[1]{\Lambda'(#1)}              
\newcommand{\VerSymb}{\Mod{V}}    
\newcommand{\IrrSymb}{\Mod{L}}    
\newcommand{\FinSymb}{\Mod{V}}    
\newcommand{\RelVerSymb}{\Mod{R}} 
\newcommand{\RelSymb}{\Mod{E}}    
\newcommand{\MaxSymb}{\Mod{J}}    
\newcommand{\NoIntSymb}{\Mod{I}}  
\newcommand{\fver}[1]{\finite{\VerSymb}_{#1}}         
\newcommand{\ffin}[1]{\finite{\FinSymb}_{#1}}         
\newcommand{\fden}[2]{\finite{\RelVerSymb}_{#1; #2}}  
\newcommand{\fcoh}[1]{\finite{\RelVerSymb}_{#1}}      
\newcommand{\aver}[1]{\affine{\VerSymb}_{#1}}         
\newcommand{\afin}[1]{\affine{\FinSymb}_{#1}}         
\newcommand{\airr}[1]{\affine{\IrrSymb}_{#1}}         
\newcommand{\arver}[2]{\affine{\RelVerSymb}_{#1; #2}} 
\newcommand{\arel}[2]{\affine{\RelSymb}_{#1; #2}}     
\newcommand{\armax}[2]{\affine{\MaxSymb}_{#1; #2}}    
\newcommand{\arint}[2]{\affine{\NoIntSymb}_{#1; #2}}  
\newcommand{\avcoh}[1]{\affine{\RelVerSymb}_{#1}}     
\newcommand{\acoh}[1]{\affine{\RelSymb}_{#1}}         
\newcommand{\parity}[1]{\overline{#1}}                 
\newcommand{\parrev}{\Pi}                              
\newcommand{\fvero}[1]{\finite{\VerSymb}_{#1}}         
\newcommand{\ffino}[1]{\finite{\FinSymb}_{#1}}         
\newcommand{\fdeno}[2]{\finite{\RelVerSymb}_{#1; #2}}  
\newcommand{\NS}{\textup{\textsc{ns}}}
\newcommand{\averns}[1]{{}^{\NS} \affine{\VerSymb}_{#1}}         
\newcommand{\afinns}[1]{{}^{\NS} \affine{\FinSymb}_{#1}}         
\newcommand{\airrns}[1]{{}^{\NS} \affine{\IrrSymb}_{#1}}         
\newcommand{\arverns}[2]{{}^{\NS} \affine{\RelVerSymb}_{#1; #2}} 
\newcommand{\arelns}[2]{{}^{\NS} \affine{\RelSymb}_{#1; #2}}     
\newcommand{\armaxns}[2]{{}^{\NS} \affine{\MaxSymb}_{#1; #2}}    
\newcommand{\arintns}[2]{{}^{\NS} \affine{\NoIntSymb}_{#1; #2}}  
\newcommand{\avcohns}[1]{{}^{\NS} \affine{\RelVerSymb}_{#1}}     
\newcommand{\acohns}[1]{{}^{\NS} \affine{\RelSymb}_{#1}}         
\newcommand{\R}{\textup{\textsc{r}}}
\newcommand{\averr}[1]{{}^{\R} \affine{\VerSymb}_{#1}}         
\newcommand{\afinr}[1]{{}^{\R} \affine{\FinSymb}_{#1}}         
\newcommand{\airrr}[1]{{}^{\R} \affine{\IrrSymb}_{#1}}         
\newcommand{\arverr}[2]{{}^{\R} \affine{\RelVerSymb}_{#1; #2}} 
\newcommand{\arelr}[2]{{}^{\R} \affine{\RelSymb}_{#1; #2}}     
\newcommand{\armaxr}[2]{{}^{\R} \affine{\MaxSymb}_{#1; #2}}    
\newcommand{\arintr}[2]{{}^{\R} \affine{\NoIntSymb}_{#1; #2}}  
\newcommand{\avcohr}[1]{{}^{\R} \affine{\RelVerSymb}_{#1}}     
\newcommand{\acohr}[1]{{}^{\R} \affine{\RelSymb}_{#1}}         
\DeclareMathOperator{\tr}{tr}
\newcommand{\Gr}[1]{\sqbrac[\big]{#1}}                 
\newcommand{\tGr}[1]{\sqbrac{#1}}                      
\newcommand{\traceover}[1]{\tr_{\raisebox{-2pt}{$\scriptstyle #1$}}} 
\DeclareMathOperator{\chmap}{ch}
\DeclareMathOperator{\sfnmap}{s}
\newcommand{\ch}[1]{\chmap \Gr{#1}}                    
\newcommand{\fch}[2]{\ch{#1} \brac{#2}}                
\newcommand{\sfn}[2]{\sfnmap_{#1} \Gr{#2}}             
\newcommand{\fsfn}[3]{\sfn{#1}{#2} \brac{#3}}
\newcommand{\tsfn}[2]{\sfnmap_{#1} \tGr{#2}}
\newcommand{\tfsfn}[3]{\tsfn{#1}{#2} \brac{#3}}
\newcommand{\rsfn}[3]{\sfnmap_{#1}^{#2} \Gr{#3}}             
\newcommand{\frsfn}[4]{\rsfn{#1}{#2}{#3} \brac{#4}}
\newcommand{\trsfn}[3]{\sfnmap_{#1}^{#2} \tGr{#3}}
\newcommand{\evsfn}[2]{\rsfn{#1}{+}{#2}}                     
\newcommand{\fevsfn}[3]{\evsfn{#1}{#2} \brac{#3}}
\newcommand{\odsfn}[2]{\rsfn{#1}{-}{#2}}                     
\newcommand{\fodsfn}[3]{\odsfn{#1}{#2} \brac{#3}}
\newcommand{\jth}[1]{\vartheta_{#1}}                   
\newcommand{\fjth}[2]{\jth{#1} \brac{#2}}              
\newcommand{\VOA}[1]{\mathsf{#1}} 
\newcommand{\slnvoa}[2]{\VOA{L}_{#1}(\SLA{sl}{#2})}         
\newcommand{\slvoa}[1]{\slnvoa{#1}{2}}
\newcommand{\ospmnvoa}[3]{\VOA{L}_{#1}(\SLSA{osp}{#2}{#3})} 
\newcommand{\ospvoa}[1]{\ospmnvoa{#1}{1}{2}}
\newcommand{\cft}{conformal field theory}
\newcommand{\cfts}{conformal field theories}
\newcommand{\lcfts}{logarithmic conformal field theories}
\newcommand{\voa}{vertex operator algebra}
\newcommand{\voas}{vertex operator algebras}
\newcommand{\svoa}{vertex operator superalgebra}
\newcommand{\svoas}{vertex operator superalgebras}
\newcommand{\uea}{universal enveloping algebra}
\newcommand{\pbw}{Poincar\'{e}{-}Birkhoff{-}Witt}
\newcommand{\bgg}{Bern\v{s}te\u{\i}n{-}Gel'fand{-}Gel'fand}
\newcommand{\wzw}{Wess-Zumino-Witten}
\newcommand{\lw}{lowest-weight}
\newcommand{\lwv}{\lw{} vector}
\newcommand{\lwvs}{\lw{} vectors}
\newcommand{\hw}{highest-weight}
\newcommand{\hwv}{\hw{} vector}
\newcommand{\hwvs}{\hw{} vectors}
\newcommand{\hwm}{\hw{} module}
\newcommand{\hwms}{\hw{} modules}
\newcommand{\lhss}{left-hand sides}
\newcommand{\rhs}{right-hand side}
\newcommand{\ns}{Neveu-Schwarz}
\title{Relaxed highest-weight modules I: rank $1$ cases}
\author[K~Kawasetsu]{Kazuya Kawasetsu}
\address[Kazuya Kawasetsu]{
School of Mathematics and Statistics \\
University of Melbourne \\
Parkville, Australia, 3010.
}
\email{kazuya.kawasetsu@unimelb.edu.au}
\author[D~Ridout]{David Ridout}
\address[David Ridout]{
School of Mathematics and Statistics \\
University of Melbourne \\
Parkville, Australia, 3010.
}
\email{david.ridout@unimelb.edu.au}
\begin{document}

\begin{abstract}
	Relaxed highest-weight modules play a central role in the study of many important vertex operator (super)algebras and their associated (logarithmic) conformal field theories, including the admissible-level affine models.  Indeed, their structure and their (super)characters together form the crucial input data for the standard module formalism that describes the modular transformations and Grothendieck fusion rules of such theories.  In this article, character formulae are proved for relaxed \hwms{} over the simple admissible-level affine vertex operator superalgebras associated to $\sltwo$ and $\osp$.  Moreover, the structures of these modules are specified completely.  This proves several conjectural statements in the literature for $\sltwo$, at arbitrary admissible levels, and for $\osp$ at level $-\frac{5}{4}$.  For other admissible levels, the $\osp$ results are believed to be new.
\end{abstract}

\maketitle

\onehalfspacing

\section{Introduction} \label{sec:intro}

Relaxed \hwms{} are a generalisation of the usual \hwms{} that are playing an increasingly important role in the representation theory of \svoas{} and their associated \cfts{}.  The name comes from the work of Feigin, Semikhatov and Tipunin \cite{FeiEqu98,FeiRes98} on the implications of the well known coset construction of the $N=2$ superconformal algebras for the representation theory of the affine Kac-Moody algebra $\asltwo$ (see \cite{SatEqu16,SatMod17,CreN=218} for recent progress on this).  In this work, they \emph{relax} the definition of a \hwv{} so that it need not be annihilated by the positive root vector of the horizontal subalgebra.  The notion of a relaxed \hwm{} has since been generalised \cite{RidRel15} to infinite-dimensional Lie superalgebras admitting a conformal grading.

A relaxed \hwm{} may therefore be described as a generalised \hwm{} obtained by inducing a weight module over the horizontal subalgebra.  The notion is similar to, but more general than, a parabolic \hwm{} because the space of ground states (equivalently, the module that one induces from) is not required to be finite-dimensional nor simple.  It seems that such modules were first considered in the vertex algebra literature in \cite{AdaVer95}, where the simple ones were classified for the admissible-level affine \voas{} $\slvoa{\kk}$.  They have also appeared in the physics literature as integral components of the $\SLG{SL}{2}(\RR)$ \wzw{} model \cite{MalStr01} and through requiring closure under fusion and cosets in $\slvoa{\kk}$ \cfts{} \cite{GabFus01,RidSL210,RidFus10,CreCos13}.  More recently, relaxed \hwms{} over $\slnvoa{\kk}{3}$ at admissible levels have also begun to receive attention \cite{AdaRea16,AraWei16}.

There are two observations relating to relaxed \hwms{} which we find compelling as arguments for their continued study.  First, they provide the most natural setting in which to study weight modules over \voas{} using Zhu algebra technology \cite{ZhuMod96}.  Second, they are an essential ingredient in many applications of the standard module formalism \cite{CreLog13,RidVer14} to the modular properties of \lcfts{}.  This formalism, which originated in \cite{CreRel11,CreWAl11}, identifies a set of standard modules, which need not be simple, from which all simple modules may be constructed using resolutions and all Grothendieck fusion rules may be computed using a variant of the celebrated Verlinde formulae of rational \cft{} \cite{VerFus88,HuaVer08}.  These standard modules turn out to be relaxed \hwms{} for admissible-level $\slvoa{\kk}$ \cite{CreMod12,CreMod13}, admissible-level $\ospvoa{\kk}$ \cite{RidAdm17,CreCos18}, and the bosonic ghost system \cite{RidBos14}.  We expect that this observation will generalise appropriately to higher-rank affine \voas{}.

One of the main inputs of the standard module formalism is a character formula for the standard modules.  For admissible-level $\slvoa{\kk}$ and $\ospvoa{\kk}$, this means determining the characters of the relaxed \hwms{}.  The characters of the reducible relaxed $\slvoa{\kk}$-modules were first computed in \cite{CreMod12,CreMod13}, but the corresponding simple $\slvoa{\kk}$-characters were only noted to follow from some unproven assertions in \cite{SemEmb97,FeiEqu98}.  Similarly, the simple $\ospvoa{-5/4}$-characters were only conjectured in \cite{RidAdm17}.

This unsatisfactory state of affairs has recently been partially rectified by Adamovi\'{c} in \cite{AdaRea17}.  There, he explicitly constructs the relaxed \hw{} $\slvoa{\kk}$- and \ns{} $\ospvoa{-5/4}$-modules using a clever free field realisation that effectively inverts the quantum hamiltonian reduction, see also \cite{SemInv94}.  While this construction leads to straightforward determinations of the characters, it is not obvious that the resulting modules are generically simple.  The simple characters therefore only follow when there are no coincidences of conformal weights, modulo $1$.  Note that a similar character formula had been previously proven for certain \emph{critical-level} relaxed \hw{} $\asltwo$-modules in \cite{AdaLie07}.


A second main input to the standard module formalism, and more widely to constructing projective covers for the \hw{} simples, is the determination of the structure of the non-simple standard modules.  This structure is needed to construct the resolutions that relate the non-standard simples to standards and thereby enable the study of the modularity of the simple modules of the theory.  Again, these structures were stated without proof and used extensively in \cite{CreMod12,CreMod13,RidAdm17}.

Our aim in this work is to rigorously prove the character formulae and structural results of \cite{CreMod12,CreMod13,RidAdm17} for all admissible levels.  Instead of an explicit construction, we develop the structure theory of ``relaxed Verma modules'' and their simple quotients over both $\asltwo$ and $\aosp$, the latter in both its \ns{} and Ramond incarnations.  The first main result (see below) is a means to compute the character of an arbitrary simple relaxed \hwm{} from that of an associated simple (usual) \hwm{}.  When the latter character is known, for example through the Kac-Wakimoto formula for admissible-level \hwms{} \cite{KacMod88,KacMod88b}, we can thereby deduce the required relaxed characters.  This is our second main result.  The third settles the structures of the non-simple relaxed modules in terms of non-split short exact sequences.  The key technical tools we use to prove these results are a generalisation of Mathieu's coherent families \cite{MatCla00} to a relaxed affine setting and a study of a Shapovalov-like form on the resulting relaxed coherent families.  

\subsection{Main results} \label{sec:resultsummary}

We divide our conclusions into three main results.  The first applies to general simple relaxed \hw{} $\asltwo$- and $\aosp$-modules of fixed level $\kk$.  These $\asltwo$-modules are denoted by $\arel{\lambda}{q}$, where $\lambda$ is a coset in the quotient of the weight space of $\sltwo$ by its root lattice and $q$ is the eigenvalue of the quadratic Casimir of $\sltwo$ on the ground states (see \cref{sec:rhwsl2}).  The $\aosp$-modules fall into \ns{} and Ramond sectors and are denoted by $\arelns{\lambda}{\sigma}$ and $\arelr{\lambda}{q}$, respectively.  Here, $\lambda$ is a coset in the quotient of the weight space of $\osp$ by its even root lattice and $\sigma$ is the eigenvalue of the super-Casimir of $\osp$ on the even ground states (see \cref{sec:rhwosp}).  In the Ramond sector, $q$ continues to refer to the $\sltwo$-Casimir eigenvalue, now understood with respect to the usual embedding $\sltwo \ira \osp$.

We say that a weight $\asltwo$- or $\aosp$-module $\aMod{M}$ is \emph{stringy} if its (non-zero) string functions $\tsfn{\nu}{\aMod{M}}$ are independent of the $\sltwo$- or $\osp$-weight $\nu$, respectively.  An $\aosp$-module is \emph{\R-stringy} if its string functions only depend on whether the corresponding $\osp$-weight is even or odd.  Finally, let $\airr{\mu}^+$, $\airrns{\mu}^+$ and $\airrr{\mu}^+$ denote the level-$\kk$ simple \hw{} $\asltwo$-, \ns{} $\aosp$- and Ramond $\aosp$-module whose \hwv{} is even with $\sltwo$- and $\osp$-weight $\mu$, respectively.  We can now state our first main result, combining \cref{thm:simplestringy,thm:simpstrfns,thm:nsimpstrfns} for $\asltwo$ with \cref{prop:simplestringyosp,thm:simpstrfnsosp,thm:nsimpstrfnsosp} for $\aosp$.
\begin{mtheorem}
	\leavevmode
	\begin{itemize}
		\item The relaxed \hw{} $\asltwo$-module $\arel{\lambda}{q}$ is stringy and its string functions are given by
		\begin{equation}
			\sfn{\nu}{\arel{\lambda}{q}} = \lim_{m \to \infty} \sfn{-\mu-m\sroot}{\airr{-\mu-\sroot}^+}, \qquad \text{for all}\ \nu \in \lambda,
		\end{equation}
		where $\sroot$ is the simple root of $\sltwo$ and $\mu$ denotes any solution of $\bilin{\mu}{\mu+\sroot} = q$, if $\sqrt{1+2q} \notin \ZZ$, and the maximal such solution (with respect to the real part of its Dynkin label), if $\sqrt{1+2q} \in \ZZ$.
		\item The \ns{} relaxed \hw{} $\aosp$-module $\arelns{\lambda}{\sigma}$ is stringy.  For $\sigma \notin \ZZ + \frac{1}{2}$, its string functions are given by
		\begin{equation}
			\sfn{\nu}{\arelns{\lambda}{\sigma}} = \lim_{m \to \infty} \sfn{-\mu-m\fwt}{\airrns{-\mu-\fwt}^+}, \qquad \text{for all}\ \nu \in \lambda \cup (\lambda+\fwt),
		\end{equation}
		where $\fwt$ is the (odd) simple root of $\osp$ and $\mu = \brac{\sigma - \frac{1}{2}} \fwt$.  This identity also holds for $\sigma \in \ZZ + \frac{1}{2}$ when $\sigma>0$.  However, when $\sigma<0$, we must replace the string function on the \rhs{} by $\tsfn{\mu-m\fwt}{\airrns{\mu}^+}$.
		\item The Ramond relaxed \hw{} $\aosp$-module $\arelr{\lambda}{q}$ is \R-stringy and its string functions are given by
		\begin{equation}
			\fsfn{\nu}{\arelr{\lambda}{q}}{\qq} =
			\begin{cases*}
				\lim_{m \to \infty} \sfn{-\mu-2m\fwt}{\airrr{-\mu-2\fwt}^+}, & for all $\nu \in \lambda$, \\
				\lim_{m \to \infty} \sfn{-\mu-(2m+1)\fwt}{\airrr{-\mu-2\fwt}^+}, & for all $\nu \in \lambda+\fwt$,
			\end{cases*}
		\end{equation}
		where $\mu$ now denotes any solution of $\bilin{\mu}{\mu+2\fwt} = q$, if $\sqrt{1+2q} \notin \ZZ$, and the maximal such solution, if $\sqrt{1+2q} \in \ZZ$.
	\end{itemize}
\end{mtheorem}

Our second main result concerns the specialisation of the first to modules over the simple admissible-level \svoas{} $\slvoa{\kk}$ and $\ospvoa{\kk}$.  For $\sltwo$, the level $\kk$ is said to be admissible if $\kk+2 = \frac{u}{v}$, where $u \in \ZZ_{\ge 2}$, $v \in \ZZ_{\ge 1}$ and $\gcd \set{u,v} = 1$.  Only the $\arel{\lambda}{q}$ with
\begin{equation}
	q = q_{r,s} = \frac{(vr-us)^2 - v^2}{2v^2}, \qquad r=1,\dots,u-1 \quad \text{and} \quad s=1,\dots,v-1
\end{equation}
define $\slvoa{\kk}$-modules \cite{AdaVer95,RidRel15}.  For $\osp$, $\kk$ is admissible if $\kk+\frac{3}{2} = \frac{u}{2v}$, where $u \in \ZZ_{\ge 2}$, $v \in \ZZ_{\ge 1}$, $\frac{1}{2}(u-v) \in \ZZ$ and $\gcd \set{\frac{1}{2}(u-v),v} = 1$.  Moreover, the $\arelns{\lambda}{\sigma}$ and $\arelr{\lambda}{q}$ are only $\ospvoa{\kk}$-modules if
\begin{equation}
	\begin{aligned}
		\sigma = \sigma_{r,s} &= \frac{vr-us}{2v}, &&& r&=1,\dots,u-1, & s&=1,\dots,v-1, & &\text{and} & r-s &\in 2\ZZ+1, \\
		\text{and} \qquad q = q_{r,s} &= \frac{(vr-us)^2 - 4v^2}{8v^2}, &&& r&=1,\dots,u-1, & s&=1,\dots,v-1, & &\text{and} & r-s &\in 2\ZZ,
	\end{aligned}
\end{equation}
respectively.  In both cases, $\slvoa{\kk}$ and $\ospvoa{\kk}$, the set of these relaxed modules is empty if $v=1$ ($\kk \in \ZZ_{\ge 0}$).

\cref{thm:chsl2,thm:NSchosp,thm:Rchosp} now give the characters of these relaxed $\slvoa{\kk}$- and $\ospvoa{\kk}$-modules, proving the conjectural formulae of \cite{CreMod12,CreMod13,RidAdm17}.  As far as we know, the formulae for $\ospvoa{\kk}$ with $\kk$ admissible and not equal to $-\frac{5}{4}$ are new.
\begin{mtheorem}
	We have the following character formulae:
	\begin{subequations}
		\begin{align}
			\fch{\arel{\lambda}{q_{r,s}}}{\zz;\qq} &= \zz^{\lambda} \frac{\chi^{\textup{Vir}}_{r,s}(\qq)}{\eta(\qq)^2} \sum_{n \in \ZZ} (\zz^{\sroot})^n, \\
			\fch{\arelns{\lambda}{\sigma_{r,s}}}{\zz;\qq} &= \zz^{\lambda} \frac{\chi^{N=1}_{r,s}(\qq)}{\eta(\qq)^2} \sqrt{\frac{\fjth{2}{1;\qq}}{2 \eta(\qq)}} \sum_{n \in \ZZ} (\zz^{\fwt})^n, \\
			\fch{\arelr{\lambda}{q_{r,s}}}{\zz;\qq} &= \zz^{\lambda} \sqbrac*{\frac{\chi^{N=1}_{r,s}(\qq)}{2 \eta(\qq)^2} \sqrt{\frac{\fjth{3}{1;\qq}}{\eta(\qq)}} \sum_{n \in \ZZ} (\zz^{\fwt})^n + \frac{\overline{\chi}^{N=1}_{r,s}(\qq)}{2 \eta(\qq)^2} \sqrt{\frac{\fjth{4}{1;\qq}}{\eta(\qq)}} \sum_{n \in \ZZ} (-\zz^{\fwt})^n}.
		\end{align}
	\end{subequations}
	Here, $\chi^{\textup{Vir}}_{r,s}$, $\chi^{N=1}_{r,s}$ and $\overline{\chi}^{N=1}_{r,s}$ denote the Virasoro minimal model character \eqref{eq:virch}, the $N=1$ superconformal minimal model character \eqref{eq:RchN=1} or \eqref{eq:NSchN=1}, and the $N=1$ superconformal minimal model supercharacter \eqref{eq:NSschN=1}, respectively.
\end{mtheorem}

The final main result concerns the structure of the non-simple relaxed $\slvoa{\kk}$- and $\ospvoa{\kk}$-modules.  Up to isomorphism, these are the $\arel{\lambda}{q_{r,s}}$, $\arelns{\lambda}{\sigma_{r,s}}$ and $\arelr{\lambda}{q_{r,s}}$ whose coset $\lambda$ contains $\mu_{r,s}$, where
\begin{equation}
	\mu_{r,s} = \frac{1}{2} \brac*{r-1 - \frac{u}{v} s} \sroot, \qquad
	\mu_{r,s} = \frac{1}{2} \brac*{r-1 - \frac{u}{v} s} \fwt \qquad \text{and} \qquad
	\mu_{r,s} = \frac{1}{2} \brac*{r-2 - \frac{u}{v} s} \fwt,
\end{equation}
respectively.  With this, the structures are characterised by \cref{thm:esvoa,thm:esvoaosp}.
\begin{mtheorem}
	We have the following non-split short exact sequences:
	\begin{subequations}
		\begin{gather}
			\dses{\airr{\mu_{r,s}}^+}{\arel{\mu_{r,s}}{q_{r,s}}}{\conj \airr{\mu_{u-r,v-s}}^+}, \\
			\dses{\airrns{\mu_{r,s}}^+}{\arelns{\mu_{r,s}}{\sigma_{r,s}}}{\parrev \conj \airrns{\mu_{u-r,v-s}}^+}, \\
			\dses{\airrr{\mu_{r,s}}^+}{\arelr{\mu_{r,s}}{q_{r,s}}}{\conj \airrr{\mu_{u-r,v-s}}^+}.
		\end{gather}
	\end{subequations}
	Here, $\conj$ and $\parrev$ denote the conjugation and parity-reversal functors, respectively (see \cref{sec:rhwsl2,sec:swosp,sec:rhwosp}).
\end{mtheorem}

\subsection{Outline} \label{sec:outline}

We begin, in \cref{sec:relaxed}, by recalling the definition of relaxed \hwms{} over an affine Kac-Moody superalgebra $\ag$ and introducing the module category in which we shall work.  We then specialise (\cref{sec:relaxedsl2}) to $\ag = \asltwo$, discussing the simple and certain carefully chosen non-simple weight $\sltwo$-modules, before inducing to obtain the relaxed $\asltwo$-modules of interest.

The study of the characters of these modules commences in \cref{sec:charsl2}.  First, the notion of a string function is recalled.  We then introduce relaxed coherent families and define a variant of the Shapovalov form on them.  We prove a key result about such forms (\cref{thm:limstrfnsexist}) which then allows us to compute the string functions of each relaxed $\asltwo$-module in \cref{sec:simpstrsl2}.  The structure of the non-simple relaxed modules is also discussed in \cref{sec:nsimpstrsl2} where we present an extended example to illustrate that this question is decidedly non-trivial in general.  \cref{sec:voasl2} then determines structures and computes characters explicitly when the relaxed $\asltwo$-module defines a module over the simple \voa{} $\slvoa{\kk}$, for general admissible levels $\kk$.

The remainder of the article studies the case $\ag = \aosp$.  There are many similarities with the $\asltwo$ case, with the main difference being the need to study a twisted (Ramond) sector in addition to the usual (\ns{}) sector.  \cref{sec:relaxedosp12} deals with the simple and non-simple $\osp$-modules and their inductions to relaxed $\aosp$-modules (\ns{} and Ramond), while \cref{sec:charosp} outlines the minor differences required to compute the string functions of the relaxed $\aosp$-modules.  The application to module characters and structures for the simple admissible-level \svoa{} $\ospvoa{\kk}$ appears in \cref{sec:voaosp}.  We conclude with \cref{app:vermasfn} in which string functions are studied for Verma modules over $\asltwo$ and $\aosp$ in order to simplify the character calculations in \cref{sec:voasl2,sec:voaosp}.

\section*{Acknowledgements}

We thank Dra\v{z}en Adamovi\'{c}, Tomoyuki Arakawa, Thomas Creutzig, Tianshu Liu and Simon Wood for useful discussions as well as their encouragement.  We also thank Will Stewart for pointing out a small error in a previous version and Ryo Sato for clarifying for us the relation between his work and relaxed $\AKMA{sl}{2}$-characters.
KK's research is supported by the Australian Research Council Discovery Project DP160101520.
DR's research is supported by the Australian Research Council Discovery Project DP160101520 and the Australian Research Council Centre of Excellence for Mathematical and Statistical Frontiers CE140100049.

\section{Relaxed \hwms{}} \label{sec:relaxed}

We recall here the relaxed \hwms{} introduced in \cite{FeiEqu98}, for $\asltwo$, and in \cite{RidRel15} for untwisted affine Kac-Moody algebras (actually, the setting in the latter paper covers relaxed modules for general conformally graded Lie superalgebras).  Given a simple Lie algebra $\g$ with a fixed choice of Cartan subalgebra $\h$, form the associated untwisted affine Kac-Moody algebra
\begin{equation}
	\ag = \g \otimes \CC[t,t^{-1}] \oplus \CC K \oplus \CC L_0,
\end{equation}
where $K$ is central and $L_0$ acts on $x_n \equiv x \otimes t^n$, $x \in \g$ and $n \in \ZZ$, as a derivation:  $\comm{L_0}{x_n} = -n x_n$.  Let $\ah = \h \oplus \CC K \oplus \CC L_0$.  We make the following definitions.
\begin{definition}
	\leavevmode
	\begin{itemize}
		\item The \emph{relaxed triangular decomposition} of an untwisted affine Kac-Moody algebra $\ag$ is
		\begin{equation}
			\ag = \ag^< \oplus \ag^0 \oplus \ag^> = \ag^< \oplus \ag^{\ge},
		\end{equation}
		where $\ag^{\ge} = \ag^0 \oplus \ag^>$, $\ag^<$ ($\ag^>$) is the subalgebra of $\ag$ consisting of the $x_n$ with $x \in \g$ and $n<0$ ($n>0$), and $\ag^0$ is the subalgebra spanned by $K$, $L_0$ and the $x_0$ with $x \in \g$.
		\item A \emph{relaxed \hwv{}} of $\ag$ is a simultaneous eigenvector of $\ah$ that is annihilated by $\ag^>$.
		\item A \emph{relaxed \hwm{}} of $\ag$ is a $\ag$-module that is generated by a single relaxed \hwv{}.
		\item A \emph{relaxed Verma module} of $\ag$ is a $\ag$-module isomorphic to $\rel{\Mod{M}} = \envalg{\ag} \otimes_{\ag^{\ge}} \Mod{M}$, where $\Mod{M}$ is some weight $\ag^0$-module on which $K$ and $L_0$ act as multiplication by some $\kk$ and $\Delta$ in $\CC$, respectively, extended to a $\ag^{\ge}$-module by letting $\ag^>$ act as $0$.
		\item A \emph{ground state} of a $\ag$-module $\aMod{M}$ is a generalised $L_0$-eigenvector whose $L_0$-eigenvalue is minimal among those of $\aMod{M}$.
	\end{itemize}
\end{definition}
\noindent Here, $\envalg{\ag^{\bullet}}$ denotes the \uea{} of $\ag^{\bullet}$, where $\bullet$ may stand for $>$, $\ge$, $0$, $\le$, $<$ or nothing.  If $\bullet$ is $\ge$, $0$, $\le$ or nothing, then it will be convenient in what follows to also consider
\begin{equation} \label{eq:defUk}
	\envalgk{\ag^{\bullet}} = \frac{\envalg{\comm{\ag}{\ag}} \cap \envalg{\ag^{\bullet}}}{\ideal{K - \kk \, \wun}}.
\end{equation}
This construction serves to remove $L_0$ as a generator and identify $K$ with a scalar multiple of the unit $\wun$ of $\envalg{\ag}$.

As usual, every relaxed \hwm{} may be realised as a quotient of some relaxed Verma module.  However, the relaxed Verma module $\rel{\Mod{M}}$ need not be a relaxed \hwm{} in general.  It will be, of course, if $\Mod{M}$ is a simple $\ag^0$-module.  Obviously, a relaxed \hwv{} of minimal conformal weight is a ground state, but the converse is not true in general.

Just as \hwms{} are typically analysed in the context of the \bgg{} category $\categ{O}$, it is useful to discuss relaxed \hwms{} as objects in a larger category.
\begin{definition}
	For an untwisted affine Kac-Moody algebra $\ag$, the associated \emph{relaxed category} $\categ{R}$ has, for objects, the $\ag$-modules $\aMod{M}$ satisfying the following conditions:
	\begin{itemize}
		\item $\aMod{M}$ is finitely generated.
		\item The action of $\h \oplus \CC K \subset \ah \subset \ag^0$ on $\aMod{M}$ is semisimple and the generalised simultaneous eigenspaces of the action of $\ah$ (its \emph{weight spaces}) are all finite-dimensional.
		\item The action of $\ag^>$ on $\aMod{M}$ is locally nilpotent: $\dim \brac*{\envalg{\ag^>} \cdot v} < \infty$ for all $v \in \aMod{M}$.
	\end{itemize}
	The morphisms are $\ag$-module homomorphisms, as usual.
\end{definition}
A relaxed \hwm{} belongs to $\categ{R}$ if and only if it has finite-dimensional weight spaces.  The same is true for a relaxed Verma module $\rel{\Mod{M}}$ and for this, it is sufficient that $\Mod{M}$ has finite-dimensional weight spaces (with respect to $\h$).  Moreover, every non-zero module in $\categ{R}$ has a relaxed \hwv{}.  It follows that the simple objects of $\categ{R}$ are relaxed \hwms{}.

\begin{remark}
	\leavevmode
	\begin{itemize}
		\item All this generalises to the affine Kac-Moody superalgebras corresponding to $\g$ being simple, basic and classical, as long as one respects the $\ZZ_2$-grading by parity throughout.
		\item For convenience, we shall understand throughout that the definition of \emph{weight module} always includes the requirement that its weight spaces are finite-dimensional.  When $\g$ is a Lie superalgebra, we shall also insist that weight modules are $\ZZ_2$-graded by parity.
		\item We do not insist that $L_0$ acts semisimply on modules in $\categ{R}$ because we would like to be able to accommodate non-semisimple actions when $\g$ is a Lie superalgebra like $\SLSA{sl}{2}{1}$.
	\end{itemize}
\end{remark}

\section{Relaxed \hw{} $\asltwo$-modules} \label{sec:relaxedsl2}

This \lcnamecref{sec:relaxedsl2} introduces the relaxed \hwms{} over $\asltwo$ that we are interested in.  We first recall the classification of simple weight modules over $\sltwo$, discussing the less familiar, but far more numerous, \emph{dense} modules in detail.  Certain non-simple dense $\sltwo$-modules are also introduced for later use.  Finally, we induce to obtain relaxed Verma $\asltwo$-modules and their (generically) simple quotients.

\subsection{Simple weight $\sltwo$-modules} \label{sec:swsl2}

We recall the classification of simple weight $\sltwo$-modules, recalling that we assume that weight modules have finite-dimensional weight spaces.  For this, we fix a basis $\set{e,h,f}$ such that
\begin{equation} \label{eq:commsl2}
	\comm{h}{e} = 2e, \quad \comm{e}{f} = h, \quad \comm{h}{f} = -2f,
\end{equation}
choose the Cartan subalgebra to be $\h = \CC h$, and normalise the quadratic Casimir in $\envalg{\sltwo}$ to be
\begin{equation}
	\cas = \frac{1}{2} h^2 + ef + fe.
\end{equation}
In this basis, the (rescaled) Killing form has non-zero entries
\begin{equation} \label{eq:killingsl2}
	\killing{h}{h} = 2, \quad \killing{e}{f} = \killing{f}{e} = 1.
\end{equation}
The bilinear form induced from the Killing form on $\h^*$ will be denoted by $\bilin{\cdot}{\cdot}$.  The rescaling normalises this form so that $\norm{\sroot}^2 = \bilin{\sroot}{\sroot} = 2$.

Let $\fwt \in \h^*$, $\sroot = 2 \fwt$ and $\wvec = \fwt$ denote the fundamental weight, the simple root and the Weyl vector of $\sltwo$, respectively.  Let $\wlat = \ZZ \fwt$ and $\rlat = \ZZ \sroot$ denote the weight and root lattices of $\sltwo$, respectively, while $\wlat_{\ge} = \ZZ_{\ge 0} \fwt$ denotes the dominant integral weights.  Finally, we introduce the following useful family of subsets of $\h^* / \rlat$, parametrised by $q \in \CC$:
\begin{equation} \label{eq:defLambda}
	\nsimpset{q} = \set*{[\lambda] \in \h^* / \rlat \st \bilin{\mu}{\mu + 2 \wvec} = q\ \text{for some}\ \mu \in [\lambda]}.
\end{equation}
The classification of simple weight $\sltwo$-modules is now succinctly stated as follows.

\begin{proposition}[see \protect{\cite[Thm.~3.32]{MazLec10}}] \label{prop:sl2simples}
	Every simple weight $\sltwo$-module (with finite-dimensional weight spaces) is isomorphic to precisely one member of one of the following families:
	\begin{enumerate}
		\item The finite-dimensional modules $\ffin{\mu}$ with highest weight $\mu \in \wlat_{\ge}$ and lowest weight $-\mu$.
		\item The \hw{} Verma modules $\fver{\mu}^+$ with highest weight $\mu \notin \wlat_{\ge}$.
		\item The \lw{} Verma modules $\fver{\mu}^-$ with lowest weight $\mu \notin -\wlat_{\ge}$.
		\item The dense modules $\fden{[\lambda]}{q}$ with weight support $[\lambda] \in \h^* / \rlat$ and $\cas$-eigenvalue $q \in \CC$ satisfying $[\lambda] \notin \nsimpset{q}$.
	\end{enumerate}
	All of these modules have one-dimensional weight spaces.
\end{proposition}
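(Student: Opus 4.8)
The statement is classical---it is \cite[Thm.~3.32]{MazLec10}---so here I only outline the argument I would give. The plan is to exploit the weight-space decomposition together with the fact that the Casimir $\cas$ acts as a scalar on any simple weight module. First I would note that since $h$ acts semisimply, a simple weight $\sltwo$-module $\Mod{M}$ decomposes as a direct sum of finite-dimensional $h$-eigenspaces $\Mod{M}_{\nu}$, with $e$ and $f$ raising and lowering the $h$-eigenvalue by $2$; hence the support lies in a single coset $[\lambda] \in \h^{*}/\rlat$. As $\Mod{M}$ is cyclic it has at most countable dimension, so Schur's lemma forces the central element $\cas$ to act as a single scalar $q \in \CC$.

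Next I would record the key computation. On each weight space the relations $\comm{e}{f} = h$ and $\cas = \frac12 h^2 + ef + fe$ pin down $ef$ and $fe$ as the scalars $ef\big|_{\Mod{M}_{\nu}} = \frac12 \brac{q - \frac12 \nu^2 + \nu}$ and $fe\big|_{\Mod{M}_{\nu}} = \frac12 \brac{q - \frac12 \nu^2 - \nu}$. A one-line check gives the identity $fe\big|_{\Mod{M}_{\nu}} = ef\big|_{\Mod{M}_{\nu+2}}$, so a single quadratic $P(\nu) = \frac12 \brac{q - \frac12 \nu^2 - \nu}$ governs the link $\Mod{M}_{\nu} \leftrightarrow \Mod{M}_{\nu+2}$. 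A highest-weight vector at $h$-eigenvalue $\nu$ forces $q = \frac12\nu^2 + \nu = \bilin{\mu}{\mu+2\wvec}$ (with $\mu$ the weight of $h$-eigenvalue $\nu$), and a lowest-weight vector forces $q = \frac12\nu^2 - \nu$, which one checks is solved by a weight in the same coset; both conditions are therefore equivalent to $[\lambda] \in \nsimpset{q}$. This identifies the ``breaking points'' of $\Mod{M}$ with the zeros of $P$ and ties them directly to the set $\nsimpset{q}$ of \eqref{eq:defLambda}; since $P$ is quadratic, these zeros are $\nu = -1 \pm \sqrt{1+2q}$, and both lie in the arithmetic progression $[\lambda]$ exactly when $\sqrt{1+2q} \in \ZZ$.

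The third step is to propagate along the coset using simplicity. Where $P(\nu) \ne 0$ the maps $e$ and $f$ restrict to isomorphisms $\Mod{M}_{\nu} \cong \Mod{M}_{\nu+2}$; where $P(\nu) = 0$ the link is severed, producing a highest-weight vector at $\nu$ and a lowest-weight vector at $\nu+2$. The four ways in which the (at most two) zeros of $P$ can meet $[\lambda]$ yield exactly the four families: two zeros in $[\lambda]$ cap the support on both sides, giving the finite-dimensional $\ffin{\mu}$ with $\mu \in \wlat_{\ge}$; a single surviving cap from above gives a \hw{} Verma $\fver{\mu}^{+}$ with $\mu \notin \wlat_{\ge}$; a single cap from below gives a \lw{} Verma $\fver{\mu}^{-}$; and no zero in $[\lambda]$ leaves a module supported on the entire coset, the dense module $\fden{[\lambda]}{q}$, which is simple precisely when $[\lambda] \notin \nsimpset{q}$. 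One-dimensionality of the weight spaces then follows from the scalar structure: for $0 \ne v \in \Mod{M}_{\nu}$ the submodule $\envalg{\sltwo} v$ is spanned by the $f^a e^c v$, and repeatedly collapsing $fe$ and $ef$ to scalars reduces every such monomial to a scalar multiple of a single power $e^m v$ or $f^{m} v$; hence each weight space of $\envalg{\sltwo} v = \Mod{M}$ is at most one-dimensional.

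I expect the genuine obstacle to be the simple-versus-reducible dichotomy for the dense case, rather than the highest/lowest-weight or finite-dimensional families, which are standard. Getting this right means verifying both directions carefully: that when $[\lambda] \in \nsimpset{q}$ the quadratic $P$ has a zero in $[\lambda]$, so every module on this coset breaks and no simple dense module exists; and conversely that when $[\lambda] \notin \nsimpset{q}$ all links are isomorphisms, whence any nonzero submodule---which necessarily contains a weight vector---must contain every weight space and so equal $\Mod{M}$. It is precisely this matching of the arithmetic of $[\lambda]$ against the zero-locus of $P$ with the definition \eqref{eq:defLambda} that carries the $\sltwo$-specific content, together with the final bookkeeping that the listed families are pairwise non-isomorphic and exhaust all cases.
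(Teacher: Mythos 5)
The paper does not prove this proposition---it is imported wholesale from \cite[Thm.~3.32]{MazLec10}---so the only thing to measure your sketch against is the standard argument, and your outline is a correct rendition of it: support confined to a single coset of $\rlat$, the countable-dimension Schur lemma forcing $\cas$ to act as a scalar $q$, the resulting scalars $ef\rvert_{\Mod{M}_{\nu}}$ and $fe\rvert_{\Mod{M}_{\nu}}$, one-dimensionality of weight spaces via $f^{m}e^{m}v \in \CC v$ (equivalently, each weight space is a simple module over the commutative centraliser $\CC[h,\cas]$, which is exactly how the paper builds the dense modules in \cref{sec:densesl2}), and the identification of the support of a simple module as an interval in the progression whose endpoints sit at zeros of $P$. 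The computations you record all check out: $fe\rvert_{\Mod{M}_{\nu}} = ef\rvert_{\Mod{M}_{\nu+2}}$, the highest-weight condition $q = \bilin{\mu}{\mu+2\wvec}$ matching \eqref{eq:defLambda}, and the zeros $\nu = -1 \pm \sqrt{1+2q}$ lying in a common coset precisely when $\sqrt{1+2q} \in \ZZ$. The one place I would tighten the wording is the ``four ways'' case split: the dichotomy is really on where a \emph{given} simple module's support interval sits relative to the zero locus, not on the pair $(q,[\lambda])$ alone---when both zeros lie in $[\lambda]$ that single pair carries three pairwise non-isomorphic simples (a finite-dimensional $\ffin{\mu}$ flanked by $\fver{-\mu-\sroot}^{+}$ and $\fver{\mu+\sroot}^{-}$, exactly the composition factors the paper lists for the non-simple $\fden{\lambda}{q}$), and when exactly one zero lies in $[\lambda]$ it carries two. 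Read that way your case analysis is right, and the remaining bookkeeping you flag (exhaustion and pairwise non-isomorphism) is indeed routine.
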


We recall that a dense module is one whose weight support is a translation of $\rlat$.  Whenever it will not cause confusion, we shall drop the brackets distinguishing $\lambda \in \h^*$ from its coset $[\lambda] \in \h^* / \rlat$, especially with regard to notation for dense modules: thus, $\fden{[\lambda]}{q} \equiv \fden{\lambda}{q}$.  Note that $\ffin{\mu}$, $\mu \in \wlat_{\ge}$, is left invariant by the functor induced from the Weyl reflection of $\sltwo$, while it exchanges $\fver{\mu}^+$ with $\fver{-\mu}^-$ and $\fden{\lambda}{q}$ with $\fden{-\lambda}{q}$, for $\lambda \notin \nsimpset{q}$ and $\mu \notin \wlat_{\ge}$.

\subsection{Non-simple dense $\sltwo$-modules} \label{sec:densesl2}

Fix $q \in \CC$ and consider the family of simple dense $\sltwo$-modules $\fden{\lambda}{q}$, $\lambda \notin \nsimpset{q}$, given in \cref{prop:sl2simples}.  It is clear that $f \in \sltwo$ acts injectively on each of these modules, as does $e$.  It follows that we may choose basis vectors $v_{\mu}$, $\mu \in \lambda$, of $\fden{\lambda}{q}$ so that the $\sltwo$-action on $\fden{\lambda}{q}$ is given by
\begin{equation} \label{eq:polyaction}
	e v_{\mu} = \gamma_{\mu} v_{\mu + \sroot}, \quad
	h v_{\mu} = \bilin{\mu}{\sroot} v_{\mu}, \quad
	f v_{\mu} = v_{\mu - \sroot}, \qquad
	\gamma_{\mu} = \frac{1}{2} \sqbrac[\big]{q - \bilin{\mu}{\mu + 2 \wvec}}.
\end{equation}
The key observation is that this action is polynomial in $\mu \in \h^*$.  To complete this family of dense $\sltwo$-modules, we shall choose a non-simple dense $\sltwo$-module, also denoted by $\fden{\lambda}{q}$, to fill each ``gap'' corresponding to the $\lambda \in \nsimpset{q}$.  This will be done by requiring that $f$ continues to act injectively.  It then follows that \eqref{eq:polyaction} will also hold for the non-simple $\fden{\lambda}{q}$.

To construct these non-simple modules, we recall that dense $\sltwo$-modules are easily obtained by inducing the simple modules of the centraliser of $\h$ in $\envalg{\sltwo}$.  Using the \pbw{} theorem, it is easy to see that this centraliser is $\CC[h,\cas]$.  Let $v$ denote a spanning vector of a (necessarily one-dimensional) simple $\CC[h,\cas]$-module, so that $hv = \lambda(h) v$ and $\cas v = qv$, for some $\lambda \in \h^*$ and some $q \in \CC$.  Then, a basis of the (obviously dense) induced $\sltwo$-module is $\set{v, e^n v, f^n v \st n \in \ZZ_{>0}}$.  Moreover, this module will be simple if and only if no $e^n v$ is a \lwv{} and no $f^n v$ is a \hwv{}, leading to the condition $[\lambda] \notin \nsimpset{q}$ stated in \cref{prop:sl2simples}.

If, however, we choose $\lambda \in \h^*$ such that $[\lambda] \in \nsimpset{q}$, then the induced $\sltwo$-module will be dense and indecomposable, but not simple.  The solutions in $\h^*$ of $\bilin{\mu}{\mu + 2 \wvec} = q$ have the form
\begin{equation} \label{eq:sols}
	\mu = -\wvec \pm \sqrt{1+2q} \, \fwt
\end{equation}
and are therefore distinct unless $q = -\norm{\wvec}^2 = -\frac{1}{2}$.  If there is precisely one such solution $\mu$ in $[\lambda] \in \h^* / \rlat$, meaning that $\sqrt{1+2q} \notin \ZZ \setminus \set{0}$, then the structure of the induced module depends only on whether $\lambda \le \mu$ or $\lambda > \mu$ (where the ordering is by the real part of the Dynkin label).  We choose $\fden{\lambda}{q} = \fden{[\lambda]}{q}$ to be the induced module obtained when $\lambda > \mu$.  Then, $\fden{\lambda}{q}$ has no \lwvs{} and so $f$ acts injectively, as desired, although $e$ does not.

If there are instead two (distinct) solutions \eqref{eq:sols} in $[\lambda]$, which requires that $\sqrt{1+2q} \in \ZZ \setminus \set{0}$, then let $\mu$ denote the maximal one (with respect to the ordering used above).  We have, therefore, $\mu \in \wlat_{\ge}$.  There are now three different possible structures for the induced $\sltwo$-modules according as to whether $\lambda > \mu$, $\lambda < -\mu$ or $-\mu \le \lambda \le \mu$.  We again choose $\fden{\lambda}{q} = \fden{[\lambda]}{q}$ to be the induced module obtained when $\lambda > \mu$ so that $f$ acts injectively.

For fixed $q \in \CC$, the number $\abs*{\nsimpset{q}}$ of (isomorphism classes of) non-simple $\fden{\lambda}{q}$ is therefore $1$ if $\sqrt{1+2q} \in \ZZ$ and is $2$ otherwise.  We can characterise each of these non-simples through its unique composition series.  If $\sqrt{1+2q} \in \ZZ \setminus \set{0}$ and $\mu \in \wlat_{\ge}$ is the maximal solution of $\bilin{\mu}{\mu + 2 \wvec} = q$, so that $\lambda = [\mu]$, then the composition series is
\begin{equation}
	0 \subset \fver{-\mu-\sroot}^+ \subset \fver{\mu}^+ \subset \fden{\lambda}{q}
\end{equation}
and its composition factors are $\fver{-\mu-\sroot}^+$, $\ffin{\mu}$ and $\fver{\mu+\sroot}^-$.  If $\sqrt{1+2q} \notin \ZZ \setminus \set{0}$ and $\mu$ is any solution of $\bilin{\mu}{\mu + 2 \wvec} = q$, then the composition series for $\lambda = [\mu]$ is instead
\begin{equation}
	0 \subset \fver{\mu}^+ \subset \fden{\lambda}{q}
\end{equation}
and the composition factors are $\fver{\mu}^+$ and $\fver{\mu+\sroot}^-$.

\begin{example}[$\sqrt{1+2q} \notin \ZZ$]
	Suppose we choose $q = -\frac{3}{8}$.  Then, $\bilin{\mu}{\mu + 2 \wvec} = q$ if and only if $\mu = -\frac{1}{2} \fwt$ or $-\frac{3}{2} \fwt$.  As the difference of these solutions is not in $\rlat$, it follows that $\fden{\lambda}{-3/8}$ is simple for all but two cosets $\lambda \in \h^* / \rlat$, one for each solution.  In other words, $\nsimpset{-\frac{3}{8}} = \set*{[-\frac{1}{2} \fwt], [-\frac{3}{2} \fwt]}$.  The corresponding non-simple dense $\sltwo$-modules are indecomposable with two composition factors each.  Moreover, they are completely characterised by the following short exact sequences:
	\begin{equation}
		\dses{\fver{-\fwt/2}^+}{\fden{-\fwt/2}{-3/8}}{\fver{3\fwt/2}^-}, \qquad
		\dses{\fver{-3\fwt/2}^+}{\fden{\fwt/2}{-3/8}}{\fver{\fwt/2}^-}.
	\end{equation}
\end{example}

\begin{example}[$\sqrt{1+2q} \in \ZZ \setminus \set{0}$]
	By way of contrast, taking $q=0$ yields $\mu = 0$ and $-2 \fwt$ as the solutions of $\bilin{\mu}{\mu + 2 \wvec} = q$.  The difference of these solutions does lie in $\rlat$, hence $\fden{\lambda}{0}$ is simple for all cosets except $\lambda \in \nsimpset{0} = \set*{[0]}$.  This exception is indecomposable, with three composition factors, and is characterised by the following short exact sequence:
	\begin{equation}
		\dses{\fver{0}^+}{\fden{0}{0}}{\fver{2 \fwt}^-}.
	\end{equation}
	Note that the Verma module $\fver{0}^+$ is not simple, having $\fver{-2 \fwt}^+$ as a simple proper submodule.
\end{example}

\begin{example}[$\sqrt{1+2q}=0$]
	The last type of example corresponds to $q=-\frac{1}{2}$, for which the only solution of $\bilin{\mu}{\mu + 2 \wvec} = q$ is $\mu = -\wvec$.  $\fden{\lambda}{-1/2}$ is therefore simple unless $\lambda \in \nsimpset{-\frac{1}{2}} = \set*{[\wvec]}$.  The non-simple dense module has two composition factors and is characterised by the following short exact sequence:
	\begin{equation}
		\dses{\fver{-\wvec}^+}{\fden{\wvec}{-1/2}}{\fver{\wvec}^-}.
	\end{equation}
\end{example}

\subsection{Relaxed \hw{} $\asltwo$-modules} \label{sec:rhwsl2}

Each of the simple $\sltwo$-modules $\Mod{M}$ of \cref{prop:sl2simples}, and more generally any indecomposable weight $\sltwo$-module, may be induced to a unique relaxed Verma module $\rel{\Mod{M}}$ of $\asltwo$, once we fix the eigenvalue $\kk$ of $K$, called the \emph{level}, and the eigenvalue $\Delta$ of $L_0$, called the \emph{conformal weight}, on $\Mod{M}$.  It is clear that $\rel{\Mod{M}}$ is in category $\categ{R}$ and that its space of ground states is naturally isomorphic to $\Mod{M}$ as an $\sltwo$-module.  We shall not specify the level $\kk$ or conformal weight $\Delta$ explicitly in our module notation, assuming that it is understood in the given context.

If we take $\Mod{M}$ to be one of the $\fver{\mu}^+$, then induction results in a Verma module (with respect to the standard Borel subalgebra of $\asltwo$).  Starting with $\Mod{M} = \fver{\mu}^-$, we instead obtain Verma modules with respect to the Borel obtained from the standard one by applying the Weyl reflection of $\sltwo$.  We denote the results by $\aver{\mu}^+$ and $\aver{\mu}^-$, respectively.  Their respective simple quotients will be denoted by $\airr{\mu}^+$ and $\airr{\mu}^-$.  The functor (on $\sltwo$-modules) induced from the Weyl reflection lifts to a functor on $\asltwo$-modules called \emph{conjugation}.  We shall denote this conjugation functor by $\conj$ so that $\conj \aver{\mu}^+ \cong \aver{-\mu}^-$ and $\conj \airr{\mu}^+ \cong \airr{-\mu}^-$.

If we instead take $\Mod{M} = \ffin{\mu}$, so $\mu \in \wlat_{\ge}$, then we arrive at a proper quotient of both $\aver{\mu}^+$ and $\aver{-\mu}^-$ which we shall denote by $\afin{\mu}$.  This is actually a parabolic Verma module (with respect to the parabolic subalgebra $\asltwo^{\ge}$) and its simple quotient will be denoted by $\airr{\mu}$.  Both $\aver{\mu}$ and $\airr{\mu}$ are self-conjugate.  We note that all of the relaxed Verma modules $\aver{\mu}^+$, $\aver{\mu}^-$ and $\afin{\mu}$, as well as their simple quotients $\airr{\mu}^+$, $\airr{\mu}^-$ and $\airr{\mu}$, are \hwms{} with respect to the standard or the Weyl-reflected Borel subalgebra of $\asltwo$.

The most interesting case is thus that of the relaxed Verma modules $\arver{\lambda}{q}$ that are induced from the dense $\sltwo$-modules $\fden{\lambda}{q}$.  These are not \hw{} with respect to any Borel.  Let $\arint{\lambda}{q}$ denote the sum of the submodules of $\arver{\lambda}{q}$ that have zero intersection with the space of ground states and let $\arel{\lambda}{q} = \arver{\lambda}{q} \big/ \arint{\lambda}{q}$.  The $\arel{\lambda}{q}$ are likewise not \hw{} with respect to any Borel.  However, they are simple for all $\lambda \notin \nsimpset{q}$ as $\arint{\lambda}{q}$ then coincides with the maximal proper submodule $\armax{\lambda}{q}$ of $\arver{\lambda}{q}$ (which is unique because $\arver{\lambda}{q}$ is cyclic).  We shall identify the space of ground states of both $\arver{\lambda}{q}$ and $\arel{\lambda}{q}$ with $\fden{\lambda}{q} = \bigoplus_{\mu \in \lambda} \CC v_{\mu}$, so that the action of the zero modes $e_0$, $h_0$ and $f_0$ on the ground states is given by \eqref{eq:polyaction}.  We remark that $\conj \arver{\lambda}{q} \cong \arver{-\lambda}{q}$ and $\conj \arel{\lambda}{q} \cong \arel{-\lambda}{q}$, when $\lambda \notin \nsimpset{q}$, but that these isomorphisms fail for $\lambda \in \nsimpset{q}$.

Our aim in this paper is to rigorously determine the characters of the simple $\arel{\lambda}{q}$.  The key to this computation is to consider the result when $\lambda \in \nsimpset{q}$, that is when these relaxed \hwms{} are not simple.

\section{Relaxed $\asltwo$-modules and their string functions} \label{sec:charsl2}

In this \lcnamecref{sec:charsl2}, we study the string functions of the relaxed \hw{} $\asltwo$-modules $\arel{\lambda}{q}$.  The aim is to compute them in terms of the ``limiting'' string functions of certain associated simple \hwms{}.  This will be achieved by introducing affine versions of Mathieu's coherent families \cite{MatCla00} and studying analogues of Shapovalov forms on them.

\subsection{String functions} \label{sec:stringsl2}

Recall that the \emph{character} of a level-$\kk$ weight module $\aMod{M}$ over $\asltwo$ is given by
\begin{equation} \label{eq:defchar}
	\fch{\aMod{M}}{\zz;\qq} = \traceover{\aMod{M}} \zz^{h_0} \qq^{L_0} = \sum_{\mu \in \h^*, n \in \CC} \dim \aMod{M}(\mu,n) \, \zz^{\mu} \qq^n,
\end{equation}
where $\qq$ and $\zz$ are indeterminates and $\aMod{M}(\mu,n)$ denotes the weight space of $\aMod{M}$ with $\sltwo$-weight $\mu \in \h^*$ and conformal weight $n$.  The \emph{string function} $\tsfn{\mu}{\aMod{M}}$, $\mu \in \h^*$, of $\aMod{M}$ is then the coefficient of $\zz^{\mu}$ in the character:
\begin{equation} \label{eq:defsfn}
	\fsfn{\mu}{\aMod{M}}{\qq} = \sum_{n \in \CC} \dim \aMod{M}(\mu,n) \, \qq^n.
\end{equation}
We make the following definition.
\begin{definition}
	A level-$\kk$ weight module $\aMod{M}$ is said to be \emph{stringy} if its non-zero string functions $\tsfn{\mu}{\aMod{M}}$ all coincide.
\end{definition}
\noindent This means, in particular, that the multiplicities $\dim \aMod{M}(\mu,n)$ of the weights of $\aMod{M}$ are independent of $\mu$, provided only that $\mu$ is in the weight support of $\aMod{M}$.

\begin{example}
	Straightforward examples of stringy $\asltwo$-modules are provided by the level-$\kk$ relaxed Verma modules $\arver{\lambda}{q}$, where $\lambda \in \h^* / \rlat$ and $q \in \CC$ (see \cref{sec:rhwsl2}).  Indeed, their characters are easily computed:
	\begin{equation}
		\fch{\arver{\lambda}{q}}{\zz;\qq} = \frac{\qq^{\Delta + 1/8}}{\eta(\qq)^3} \sum_{\mu \in \lambda} \zz^{\mu} \qquad \implies \qquad
		\fsfn{\mu}{\arver{\lambda}{q}}{\qq} =
		\begin{dcases*}
			\frac{\qq^{\Delta + 1/8}}{\eta(\qq)^3}, & if $\mu \in \lambda$, \\
			0, & otherwise.
		\end{dcases*}
	\end{equation}
	Here, $\eta(\qq) = \qq^{1/24} \prod_{i=1}^{\infty} (1-\qq^i)$ is Dedekind's eta function.
\end{example}

\begin{remark}
	In applications to \voas{} and \cft{}, it is common to normalise characters (and thus string functions) by multiplying by $\qq^{-\cc / 24}$, where $\cc = \frac{3\kk}{\kk+\dcox}$ is the central charge of the theory (and $\kk \neq -\dcox = -2$).  Moreover, in this case, the Sugawara construction also fixes $\Delta$ as a function of $q$ and $\kk$.  We shall make this adjustment when applying our results to relaxed modules over the affine \voa{} $\slvoa{\kk}$ in \cref{sec:voasl2} below.
\end{remark}

We refer to series like string functions as generalised formal power series.  There is a useful partial ordering on generalised formal power series in $\qq$ defined by
\begin{equation} \label{eq:gfps<}
	\sum_{n \in \CC} a_n \qq^n \le \sum_{n \in \CC} b_n \qq^n \qquad \text{if} \qquad a_n \le b_n \qquad \text{for each $n \in \CC$.}
\end{equation}
If $\brac[\big]{S_m(\qq)}_{m \in \ZZ}$ is a sequence of generalised formal power series in $\qq$, then we say that this sequence converges to another generalised formal power series $S(\qq)$ if the coefficients in their expansions do.  More precisely, if we have
\begin{equation}
	S_m(\qq) = \sum_{n \in \CC} a_{m,n} \qq^n \qquad \text{and} \qquad S(\qq) = \sum_{n \in \CC} a_n \qq^n,
\end{equation}
then we shall write
\begin{equation} \label{eq:gfpslim}
	\lim_{m \to \pm \infty} S_m(\qq) = S(\qq) \qquad \text{if} \qquad
	\lim_{m \to \pm \infty} a_{m,n} = a_n \qquad \text{for each $n \in \CC$.}
\end{equation}
In what follows, we shall find it convenient to denote these limiting generalised formal power series by $S_{\pm\infty}(\qq)$.  In particular, when $\aMod{M}$ is indecomposable, so its weight support is a single coset $[\mu] \in \h^* / \rlat$, we shall define limiting string functions by
\begin{equation}
	\fsfn{\pm \infty}{\aMod{M}}{\qq} = \lim_{m \to \pm \infty} \fsfn{\mu + m \sroot}{\aMod{M}}{\qq},
\end{equation}
whenever the \rhs{} exists.

\subsection{Coherent families and Shapovalov forms} \label{sec:moretools}

Our first aim is to prove that the relaxed \hw{} $\asltwo$-modules $\arel{\lambda}{q}$ are stringy.  For this, we shall employ two key tools.  The first is Mathieu's notion of a coherent family \cite{MatCla00}.  This is a (highly reducible) module that is parametrised by its central character: for $\sltwo$, this is just the eigenvalue $q$ of the quadratic Casimir.  Although there is always more than one coherent family for each central character, the conventions introduced above (to facilitate the present application) pick one out uniquely.  We shall lift these preferred coherent families to relaxed coherent families over $\asltwo$.  These $\asltwo$-modules will be crucial for establishing the stringiness of the $\arel{\lambda}{q}$.

The coherent families that we shall use for $\sltwo$ are the direct sums
\begin{equation} \label{eq:cohfamsl2}
	\fcoh{q} = \bigoplus_{\lambda \in \h^* / \rlat} \fden{\lambda}{q}, \quad q \in \CC.
\end{equation}
Each of these has a one-dimensional weight space for every weight $\mu \in \h^*$.  Recall that we chose the $\fden{\lambda}{q}$ in \cref{sec:densesl2} so that the action of $f$ on each $\fden{\lambda}{q}$ would be injective.  The vectors $v_{\mu}$, now with $\mu \in \h^*$, therefore define a basis of $\fcoh{q}$ on which the $\sltwo$-action is again given by \eqref{eq:polyaction}.  We emphasise that this action is manifestly polynomial in $\mu$.

We introduce two affine versions of the $\sltwo$ coherent families of \eqref{eq:cohfamsl2}.  These \emph{relaxed coherent families} are $\asltwo$-modules and we have one version that decomposes into relaxed Verma modules and one into their generically simple quotients:
\begin{equation} \label{eq:defaffcohfams}
	\avcoh{q} = \bigoplus_{\lambda \in \h^* / \rlat} \arver{\lambda}{q}, \qquad
	\acoh{q} = \bigoplus_{\lambda \in \h^* / \rlat} \arel{\lambda}{q}.
\end{equation}
These modules do not share the property of having one-dimensional weight spaces (with respect to the Cartan subalgebra $\ah$ of $\asltwo$).  However, they do admit a polynomial action of $\asltwo$ and so provide a useful setting for comparing the properties of their summands.

The second tool that we shall need is an analogue of the Shapovalov form on the relaxed coherent families $\avcoh{q}$.  To construct this, we first construct such forms on the relaxed Verma modules $\arver{\lambda}{q}$.  Our definition depends on two choices: a cyclic generator of $\arver{\lambda}{q}$ and an adjoint (linear involutive antiautomorphism) of $\envalg{\asltwo}$.  For the generator, we shall choose a ground state $v_{\nu}$, $\nu \in \lambda$.  This may be chosen arbitrarily when $\lambda \notin \nsimpset{q}$.  When $\lambda \in \nsimpset{q}$, we must choose a $v_{\nu}$ with $\nu > \mu$, where $\mu$ is the maximal solution in $\lambda$ of $\bilin{\mu}{\mu+2\wvec} = q$.  For the adjoint, we take the extension to $\envalg{\asltwo}$ of the compact adjoint of $\asltwo$:
\begin{equation}
	e_n^{\dag} = f_{-n}, \qquad h_n^{\dag} = h_{-n}, \qquad f_n^{\dag} = e_{-n}, \qquad K^{\dag} = K, \qquad L_0^{\dag} = L_0.
\end{equation}
Given these choices, recalling \cref{eq:defUk} and noting that $v_{\nu}$ is a simultaneous eigenvector of $K$ and $L_0$, we define a contravariant bilinear form $\inner{\cdot}{\cdot}_{\nu}$ on $\arver{\lambda}{q}$ by
\begin{equation} \label{eq:defshap}
	\inner{v_{\nu}}{v_{\nu}}_{\nu} = 1 \quad \text{and} \quad \inner{Uv_{\nu}}{Vv_{\nu}}_{\nu} = \inner{v_{\nu}}{U^{\dag}Vv_{\nu}}_{\nu}, \qquad \text{for all}\ U,V \in \envalgk{\asltwo}.
\end{equation}
We call it a \emph{Shapovalov form} on $\arver{\lambda}{q}$.  Note that the kernel of such a Shapovalov form on $\arver{\lambda}{q}$ coincides with the maximal proper submodule $\armax{\lambda}{q}$ and that this does not depend on the choices made during the construction.

To check that this form is well defined, note that as $h_0$ and $L_0$ are both self-adjoint, their simultaneous eigenspaces are orthogonal with respect to $\inner{\cdot}{\cdot}_{\nu}$.  Taking a \pbw{} ordering such that mode indices increase to the right, we see that $\inner{Uv_{\nu}}{Vv_{\nu}}_{\nu}$ vanishes if $U^{\dag}V$ belongs to the span $Z$ of the ordered monomials that either involve a non-zero mode index or have a non-zero $\sltwo$-weight.  It follows that the value of the form \eqref{eq:defshap} is entirely determined by the projection $\beta \colon \envalgk{\asltwo} \to \CC[h,\cas]$ whose kernel is $Z$:
\begin{equation} \label{eq:defshap2}
	\inner{Uv_{\nu}}{Vv_{\nu}}_{\nu} = \left. \beta(U^{\dag}V) \right\rvert_{h \mapsto \nu(h), Q \mapsto q}.
\end{equation}
Here, we have identified the image of $\beta$ with the centraliser of $\h$ in $\envalg{\sltwo}$ (\cref{sec:densesl2}).

Fix now $q \in \CC$.  For each $\lambda \in \h^* / \rlat$, choose a $\nu \in \lambda$ that defines a Shapovalov form $\inner{\cdot}{\cdot}_{\nu}$ on $\arver{\lambda}{q}$.  The direct sum
\begin{equation}
	\bigoplus_{\lambda \in \h^* / \rlat} \inner{\cdot}{\cdot}_{\nu}
\end{equation}
then defines a contravariant bilinear form, which we shall also refer to as a Shapovalov form, on the relaxed coherent family $\avcoh{q}$.  This construction clearly depends on the uncountably many choices for $\nu$, one for each $\lambda \in \h^* / \rlat$.  However, the kernel of this form is independent of these choices.  Note that this construction is equivalent to extending the chosen Shapovalov forms on the $\arver{\lambda}{q}$ to $\avcoh{q}$ by insisting that $v_{\xi}$ and $v_{\zeta}$ are orthogonal for all distinct $\xi, \zeta \in \h^*$ (consistent with $h_0$ being self-adjoint).

We are now almost ready for the key technical result, \cref{lem:rankconst} below.  First, however, recall that when $\lambda \notin \nsimpset{q}$, we have $\arint{\lambda}{q} = \armax{\lambda}{q}$.  When $\lambda \in \nsimpset{q}$, the following result will prove to be a useful substitute.
\begin{lemma} \label{lem:I=J}
	Suppose that $\lambda \in \nsimpset{q}$ and let $\mu$ be the maximal solution in $\lambda$ of $\bilin{\mu}{\mu+2\wvec} = q$.  Then,
	\begin{equation}
		\arint{\lambda}{q}(\mu+m\sroot,\Delta+n) = \armax{\lambda}{q}(\mu+m\sroot,\Delta+n),
	\end{equation}
	for all $m > n \in \ZZ_{\ge 0}$.
\end{lemma}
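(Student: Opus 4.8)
The plan is to reduce the statement to a support estimate for a \hwm{}. Since any submodule of $\arver{\lambda}{q}$ meeting the ground states trivially is proper, and since the sum of $L_0$-graded submodules with this property again meets the ground states trivially (the conformal-weight-$\Delta$ pieces simply add), $\arint{\lambda}{q}$ is itself proper; as $\arver{\lambda}{q}$ is cyclic, $\armax{\lambda}{q}$ is its unique maximal proper submodule, so $\arint{\lambda}{q} \subseteq \armax{\lambda}{q}$ and the inequality $\dim \arint{\lambda}{q}(\mu+m\sroot, \Delta+n) \le \dim \armax{\lambda}{q}(\mu+m\sroot, \Delta+n)$ is automatic. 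It will therefore suffice to prove that the quotient $Q = \armax{\lambda}{q}/\arint{\lambda}{q}$ has trivial weight space at $(\mu+m\sroot, \Delta+n)$ whenever $m > n \ge 0$, which I would do by identifying $Q$ as a \hwm{} of highest weight $\mu$ and reading off its weight support.

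First I would record that $v_\mu$ is a genuine \hwv{} for $\asltwo$: as a ground state it is annihilated by $\asltwo^{>}$, and $e_0 v_\mu = \gamma_\mu v_{\mu+\sroot} = 0$ because $\bilin{\mu}{\mu+2\wvec} = q$ forces $\gamma_\mu = 0$. Thus $\envalg{\asltwo} v_\mu$ is a \hwm{} whose ground states are $\envalg{\sltwo} v_\mu = \fver{\mu}^+$, using that $f$ acts injectively on $\fden{\lambda}{q}$ by the choice made in \cref{sec:densesl2}. Since $\fver{\mu}^+ \subsetneq \fden{\lambda}{q}$, this submodule is proper, whence $v_\mu \in \armax{\lambda}{q}$ while $v_\mu \notin \arint{\lambda}{q}$. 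I would then argue that $\armax{\lambda}{q} \cap \fden{\lambda}{q}$ is an $\sltwo$-submodule of $\fden{\lambda}{q}$ that contains $\fver{\mu}^+$ and is properly contained in $\fden{\lambda}{q}$ (being the ground states of a proper submodule); as $\fden{\lambda}{q}/\fver{\mu}^+ \cong \fver{\mu+\sroot}^-$ is simple (\cref{sec:densesl2}), this pins it down to $\armax{\lambda}{q} \cap \fden{\lambda}{q} = \fver{\mu}^+$. Passing to $\arel{\lambda}{q} = \arver{\lambda}{q}/\arint{\lambda}{q}$, whose ground states are all of $\fden{\lambda}{q}$, it follows that $Q$ has ground states exactly $\fver{\mu}^+$.

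Next I would upgrade this to $Q = \envalg{\asltwo}\bar v_\mu$, where $\bar v_\mu$ is the image of $v_\mu$. The submodule $Q_0 = \envalg{\asltwo}\bar v_\mu \subseteq Q$ also has ground states $\fver{\mu}^+$, so the subquotient $Q/Q_0$ of $\arel{\lambda}{q}$ would have no ground states at all; but every nonzero subquotient of $\arel{\lambda}{q}$ has conformal weights in $\Delta + \NN$ and hence attains a minimal conformal weight, forcing $Q/Q_0 = 0$. Thus $Q = \envalg{\asltwo}\bar v_\mu$ is a \hwm{} of highest weight $\mu$, i.e.\ a quotient of $\aver{\mu}^+$, and the \pbw{} theorem finishes the estimate: an ordered monomial in the negative modes and $f_0$ that shifts the $\sltwo$-weight up by $m\sroot$ with $m \ge 1$ must use at least $m$ of the weight-raising modes $e_{-k}$ ($k>0$), each adding at least $1$ to the conformal weight, so $\mu + m\sroot$ occurs in $\aver{\mu}^+$ only at conformal weights $\ge \Delta + m$. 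Hence $\aver{\mu}^+(\mu+m\sroot, \Delta+n) = 0$, and a fortiori $Q(\mu+m\sroot, \Delta+n) = 0$, whenever $m > n$, which is the claimed equality.

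I expect the crux to be the identification of $Q$ as the cyclic \hwm{} generated by $\bar v_\mu$: this is where the hypothesis $\lambda \in \nsimpset{q}$ and the composition structure of $\fden{\lambda}{q}$ really enter, both in computing the ground states of $\armax{\lambda}{q}$ and in the ``no ground states, hence zero'' step that promotes $Q_0$ to all of $Q$. Once $Q$ is known to be \hw{}, the supporting \pbw{} estimate is routine.
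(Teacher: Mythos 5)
Much of your setup is sound: the containment $\arint{\lambda}{q} \subseteq \armax{\lambda}{q}$, the observation that $v_{\mu}$ is a genuine \hwv{}, the identification $\armax{\lambda}{q} \cap \fden{\lambda}{q} = \fver{\mu}^+$, and the closing \pbw{} support estimate for quotients of $\aver{\mu}^+$ are all correct. The proof breaks, however, at the step promoting $Q_0 = \envalg{\asltwo}\bar{v}_{\mu}$ to all of $Q = \armax{\lambda}{q}\big/\arint{\lambda}{q}$. You argue that $Q/Q_0$ vanishes at conformal weight $\Delta$, hence has ``no ground states'', hence is zero because every nonzero subquotient attains a minimal conformal weight. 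This is a non sequitur: a nonzero subquotient does attain a minimal conformal weight, but nothing forces that minimum to be $\Delta$; its ground states may simply sit at conformal weight $\Delta+k$ with $k \ge 1$. Worse, the conclusion $Q = Q_0$ is genuinely false. The cautionary example closing \cref{sec:nsimpstrsl2} (the module $\arel{-\wvec}{-1/2}$ at $\kk=-1$, where $\sqrt{1+2q}=0$ and $\mu=-\wvec$) exhibits a composition factor $\airr{\wvec}$ beyond the simple submodule $\airr{-\wvec}^+ = Q_0$ and the top quotient $\airr{\wvec}^- \cong \arel{-\wvec}{-1/2}/Q$; it therefore occurs in $Q/Q_0$, with generator at $(\mu+\sroot,\Delta+1)$, i.e.\ at $m=n=1$. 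So $Q$ need not be a \hwm{} of highest weight $\mu$, and the reduction collapses. Note also that the identity $\armax{\lambda}{q} = \aver{\mu}^+ + \arint{\lambda}{q}$, which is essentially what you are asserting, is established in \cref{prop:2cfs} only for $\sqrt{1+2q}\notin\ZZ$ and only as a \emph{consequence} of the present lemma, so it cannot be invoked here without circularity.

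The repair is to run your \pbw{} counting directly on $\armax{\lambda}{q}$ rather than on a structural model of $Q$, which is what the paper does: given a weight vector $v \in \armax{\lambda}{q}(\mu+m\sroot,\Delta+n)$ with $m>n$, the submodule it generates cannot contain any ground state $v_{\nu}$ with $\nu>\mu$ (each such $v_{\nu}$ generates all of $\arver{\lambda}{q}$), and applying \pbw{} monomials (indices increasing to the right) to $v$ shows that reaching a ground state $v_{\nu}$ with $\nu\le\mu$ would force $v_{\mu+(m-n)\sroot}$, with $m-n\ge 1$, into that submodule as well --- a contradiction. Hence the submodule generated by $v$ meets the ground states trivially and $v\in\arint{\lambda}{q}$. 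Your final estimate on $\aver{\mu}^+$ is the right kind of argument; it is just being applied to the wrong module.
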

\begin{proof}
	As $\arint{\lambda}{q} \subseteq \armax{\lambda}{q}$ is clear, we suppose that $v \in \armax{\lambda}{q}(\mu+m\sroot,\Delta+n)$.  Because each ground state $v_{\nu}$, with $\nu > \mu$, generates $\arver{\lambda}{q}$, the submodule $\aMod{M}_v \subseteq \armax{\lambda}{q} \subset \arver{\lambda}{q}$ generated by $v$ has zero intersection with $\bigoplus_{\nu > \mu} \CC v_{\nu}$.  Assume that one of the other ground states $v_{\nu}$, $\nu \le \mu$, belongs to $\aMod{M}_v$.  Applying \pbw{} basis elements (with indices increasing to the right) to $v$ now shows that so must $v_{\mu + (m-n) \sroot}$, a contradiction since $m>n$.  Thus, $\aMod{M}_v$ has zero intersection with the space of ground states $\bigoplus_{\nu \in \lambda} \CC v_{\nu}$ and so $v \in \arint{\lambda}{q}$.
\end{proof}

Fix $n \in \ZZ_{\ge 0}$ and define $P_n$ to be the set of all \pbw{} monomials of $\envalgk{\asltwo^{\le 0}}$, ordered so that mode indices increase to the right, that satisfy the following conditions:
\begin{itemize}
	\item The $\sltwo$-weight ($\ad(h_0)$-eigenvalue) is $-n \sroot$.
	\item The conformal grade (the negative of the sum of the mode indices) is $n$.
	\item The exponents of $e_0$ and $h_0$ are zero.
\end{itemize}
There are clearly only finitely many such monomials.  A basis for the weight space $\avcoh{q}(\nu,\Delta+n)$ is then given by the $Uv_{\nu+n\sroot}$ with $U \in P_n$.

Choose a Shapovalov form on $\avcoh{q}$.  Then, for each $\nu \in \h^*$ and $n \in \ZZ_{\ge 0}$, we define the \emph{Shapovalov matrix} for $\avcoh{q}(\nu,\Delta+n)$ to be the $\abs*{P_n} \times \abs*{P_n}$ matrix
\begin{equation}
	A_{\nu;n} = \brac[\big]{\inner{Uv_{\nu+n\sroot}}{Vv_{\nu+n\sroot}}_{\nu}}_{U,V \in P_n}.
\end{equation}
The kernel of this matrix is then the weight space $\armax{\lambda}{q}(\nu,\Delta+n)$.  If $\lambda \notin \nsimpset{q}$, then $\armax{\lambda}{q} = \arint{\lambda}{q}$, so the rank of $A_{\nu;n}$ is the dimension of $\arel{\lambda}{q}(\nu,\Delta+n)$.  This, in turn, is the coefficient of $\qq^{\Delta+n}$ in the string function $\tfsfn{\nu}{\arel{\lambda}{q}}{\qq}$.  If $\lambda \in \nsimpset{q}$, then \cref{lem:I=J} gives the same conclusion for all $\nu > \mu + n \sroot$.
\begin{lemma} \label{lem:rankconst}
	For each $n \in \ZZ_{\ge 0}$, the rank of the Shapovalov matrix $A_{\nu;n}$ is independent of $\nu \in \h^*$ for sufficiently large $\nu$.
\end{lemma}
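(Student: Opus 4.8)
The plan is to exhibit the Shapovalov matrix $A_{\nu;n}$ as a nonzero scalar multiple of a matrix whose entries are polynomials in the single complex parameter $x=\nu(h)$, and then to invoke the elementary fact that a matrix over $\CC[x]$ has constant rank, equal to its generic value, outside a finite subset of $\CC$. To begin, I would fix $n$ and use that the rank of $A_{\nu;n}$ does not depend on the ground state chosen to generate the Shapovalov form: the kernel of any such form on $\arver{\lambda}{q}$ is the maximal submodule $\armax{\lambda}{q}$, and, since distinct weight spaces are mutually orthogonal (as $h_0$ and $L_0$ are self-adjoint), the form is block diagonal across weight spaces, so $\rank A_{\nu;n} = \dim \avcoh{q}(\nu,\Delta+n) - \dim \armax{\lambda}{q}(\nu,\Delta+n)$ regardless of the generator. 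For $\nu$ large enough that $v_{\nu+n\sroot}$ is a permissible generator (i.e.\ $\nu+n\sroot > \mu$ when $\lambda \in \nsimpset{q}$, which is automatic for large $\nu$), I would therefore compute $A_{\nu;n}$ using $\inner{\cdot}{\cdot}_{\nu+n\sroot}$ in place of $\inner{\cdot}{\cdot}_{\nu}$. Formula \eqref{eq:defshap2} then gives the $(U,V)$ entry as $\beta(U^{\dag}V)$ evaluated at $h \mapsto (\nu+n\sroot)(h)$ and $\cas \mapsto q$. As $U^{\dag}V \in \envalgk{\asltwo}$ is independent of $\nu$ and $\beta(U^{\dag}V) \in \CC[h,\cas]$, this specialisation yields a polynomial in $\nu(h)$; since $\h^{*}$ is one-dimensional, every entry is a polynomial in the single variable $x$.

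With polynomiality in hand, the remainder is routine. Each $k \times k$ minor of the matrix is a polynomial in $x$, so the largest $k$ admitting a minor that is not identically zero is the generic rank $r$, and $\rank A_{\nu;n} = r$ except at the common zeros of the finitely many nonvanishing $r \times r$ minors. This exceptional set is finite, hence bounded in $\Re x$, so $\rank A_{\nu;n} = r$ for all $\nu$ with $\Re(\nu(h))$ sufficiently large, which is a value independent of $\nu$, as required.

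I expect the only genuine obstacle to be the first step, namely arranging the entries in truly polynomial form. The Shapovalov form $\inner{\cdot}{\cdot}_{\nu}$ attached to the fixed generator $v_{\nu}$ carries a non-polynomial prefactor: expanding $v_{\nu+n\sroot} = c_n(\nu)^{-1} e_0^{n} v_{\nu}$ with $c_n(\nu) = \prod_{j=0}^{n-1} \gamma_{\nu+j\sroot}$ exhibits $A_{\nu;n} = c_n(\nu)^{-2} B_{\nu;n}$ for a polynomial matrix $B_{\nu;n}$, and one must either clear the factor $c_n(\nu)^{-2}$ (noting that $c_n$ is a nonzero polynomial, hence vanishes only finitely often and is nonzero for large $\nu$) or, more cleanly, invoke the generator-independence of the rank to pass directly to $v_{\nu+n\sroot}$. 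Once that passage is made, polynomiality is immediate from $\beta$ landing in $\CC[h,\cas]$ together with the ground-state action \eqref{eq:polyaction} being polynomial in the weight.
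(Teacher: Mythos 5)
Your argument is correct and takes essentially the same route as the paper: the entries of the Shapovalov matrix are polynomials in $\nu(h)$ by \eqref{eq:defshap2}, and the rank of a matrix with polynomial entries attains its generic value outside a finite set (you establish this via minors where the paper row-reduces over the field $\CC(\nu)$ of rational functions, an immaterial difference). Your extra care in renormalising the form at $v_{\nu+n\sroot}$ to clear the prefactor $c_n(\nu)^{-2}$ addresses a point the paper passes over silently, and since that prefactor is non-vanishing for sufficiently large $\nu$ it does not affect the conclusion either way.
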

\begin{proof}
	Fix $n$ and $q \in \CC$.  Then, the entries of $A_{\nu;n}$ are complex polynomials in $\nu(h) \in \CC$, by \eqref{eq:defshap2}.  Let $B_{\nu;n}$ denote its reduced row-echelon form over $\CC$.  If we instead treat $\nu$ as a formal indeterminate, writing $A_n(\nu)$ for the Shapovalov matrix in this case, then we may instead row-reduce over the field $\CC(\nu)$ of rational functions in $\nu$.  Let $B_n(\nu)$ denote the reduced row-echelon form, over $\CC(\nu)$, of $A_n(\nu)$.  Then, evaluating $\nu$ at $\nu(h) \in \CC$ gives $\left. B_n(\nu) \right\rvert_{\nu \mapsto \nu(h)} = B_{\nu;n}$, for all but finitely many $\nu(h)$ (because row-reduction gives only finitely many opportunities to divide by zero).  Similarly, each non-zero entry of $B_n(\nu)$ will evaluate to a non-zero entry of $B_{\nu;n}$ for all but finitely many $\nu(h)$.  As there are only finitely many entries, it follows that the number of non-zero rows of $B_n(\nu)$ and $B_{\nu;n}$ must agree for all but finitely many values of $\nu(h) \in \CC$.  This number for $B_n(\nu)$ is obviously independent of $\nu(h)$, so the \lcnamecref{lem:rankconst} follows.
\end{proof}
\begin{remark}
	The statement of the \lcnamecref{lem:rankconst} would also hold for $\nu$ sufficiently small (negative), except that our construction of Shapovalov forms required us, when $\lambda \in \nsimpset{q}$, to choose $\nu \in \lambda$ larger than the maximal solution $\mu$.
\end{remark}
\noindent This \lcnamecref{lem:rankconst} immediately implies our first result on limiting string functions.
\begin{theorem} \label{thm:limstrfnsexist}
	For given $q \in \CC$, the positive limiting string functions $\sfn{\infty}{\arel{\lambda}{q}}$ exist and are independent of $\lambda \in \h^* / \rlat$.
\end{theorem}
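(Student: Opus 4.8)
The plan is to read off \cref{thm:limstrfnsexist} directly from \cref{lem:rankconst}, using the dictionary—set up in the discussion immediately preceding that \lcnamecref{lem:rankconst}—between string-function coefficients and ranks of Shapovalov matrices. Recall from that discussion that for each $n \in \ZZ_{\ge 0}$ the coefficient of $\qq^{\Delta+n}$ in $\tfsfn{\nu}{\arel{\lambda}{q}}{\qq}$ equals $\dim \arel{\lambda}{q}(\nu,\Delta+n)$, and that this dimension coincides with $\rank A_{\nu;n}$ whenever $\lambda \notin \nsimpset{q}$, or whenever $\lambda \in \nsimpset{q}$ and $\nu > \mu + n\sroot$ (the latter case invoking \cref{lem:I=J}). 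I would therefore fix a representative $\nu \in \lambda$ and analyse the sequence $\tfsfn{\nu + m\sroot}{\arel{\lambda}{q}}{\qq}$ as $m \to \infty$ one coefficient at a time, since convergence of generalised formal power series is defined coefficientwise in \eqref{eq:gfpslim}.

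Fixing $n$, I would observe that for all sufficiently large $m$ the weight $\nu + m\sroot$ simultaneously exceeds $\mu + n\sroot$ (relevant only when $\lambda \in \nsimpset{q}$) and lies in the stable range of \cref{lem:rankconst}, because the Dynkin label $(\nu+m\sroot)(h) = \nu(h) + 2m$ tends to $+\infty$. For such $m$ the coefficient of $\qq^{\Delta+n}$ in $\tfsfn{\nu+m\sroot}{\arel{\lambda}{q}}{\qq}$ equals $\rank A_{\nu+m\sroot;n}$, which by \cref{lem:rankconst} takes a constant value $R_n$ for all large enough $m$. This $R_n$ is precisely the stable rank of $A_{\xi;n}$ as $\xi(h) \to \infty$ in $\CC$; crucially, by \eqref{eq:defshap2} the entries of $A_{\xi;n}$ are given by a single polynomial in $\xi(h)$, valid uniformly across every coset, so $R_n$ depends only on $q$ and $n$ and not on $\lambda$. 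Hence every coefficient of the sequence stabilises, and in the sense of \eqref{eq:gfpslim} the limit $\sfn{\infty}{\arel{\lambda}{q}} = \sum_{n \ge 0} R_n\, \qq^{\Delta+n}$ exists and is the same for all $\lambda \in \h^* / \rlat$.

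There is no genuine obstacle here beyond bookkeeping: the substance is supplied entirely by \cref{lem:rankconst,lem:I=J}, together with the polynomiality of the Shapovalov entries in $\nu(h)$. The one subtlety worth flagging is that the threshold on $m$ beyond which the coefficient stabilises depends on $n$, but this is harmless precisely because generalised formal power series converge coefficientwise, so no uniformity in $n$ is required.
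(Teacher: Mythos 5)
Your proposal is correct and follows exactly the route the paper takes: the paper derives \cref{thm:limstrfnsexist} immediately from \cref{lem:rankconst} via the identification (using \cref{lem:I=J} in the non-simple case) of the coefficient of $\qq^{\Delta+n}$ in $\tfsfn{\nu}{\arel{\lambda}{q}}{\qq}$ with $\rank A_{\nu;n}$ for sufficiently large $\nu$, together with the observation that the stable rank is a coset-independent quantity because the Shapovalov entries are polynomials in $\nu(h)$. You have merely made explicit the coefficientwise bookkeeping that the paper leaves implicit.
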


\subsection{Stringiness of the simple $\arel{\lambda}{q}$} \label{sec:simpstrsl2}

Recall that the $\arel{\lambda}{q}$ are simple when $\lambda \notin \nsimpset{q}$, that is when the space of ground states is simple (as an $\sltwo$-module).  Our aim here is to show that the simple $\arel{\lambda}{q}$ are stringy.  This uses the following lemmas, the first of which is an immediate application of the \pbw{} theorem for $\asltwo^<$.
\begin{lemma} \label{lem:intersection}
	If  $\mu\neq \nu$, then $\envalg{\asltwo^{<}}v_\mu\cap \envalg{\asltwo^{<}}v_\nu=0$ in $\arver{\lambda}{q}$.
\end{lemma}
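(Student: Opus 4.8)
The plan is to read the claim directly off the free-module structure of the relaxed Verma module supplied by the \pbw{} theorem. Recall that, by definition, $\arver{\lambda}{q} = \envalg{\asltwo} \otimes_{\envalg{\asltwo^{\ge}}} \fden{\lambda}{q}$ and that the ground states $v_\mu$, $\mu \in \lambda$, form a basis of $\fden{\lambda}{q}$. First I would invoke the \pbw{} theorem in the form asserting that $\envalg{\asltwo}$ is free as a right $\envalg{\asltwo^{\ge}}$-module, a basis being furnished by the \pbw{} monomials in $\envalg{\asltwo^<}$ (say, ordered so that mode indices increase to the right). Applying $- \otimes_{\envalg{\asltwo^{\ge}}} \fden{\lambda}{q}$ to this free resolution then yields a vector space isomorphism
\begin{equation}
	\arver{\lambda}{q} \cong \envalg{\asltwo^<} \otimes_{\CC} \fden{\lambda}{q},
\end{equation}
under which $U v_\mu$ corresponds to $U \otimes v_\mu$ for every $U \in \envalg{\asltwo^<}$. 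Since left multiplication by $\asltwo^<$ preserves $\envalg{\asltwo^<}$, this is in fact an isomorphism of left $\envalg{\asltwo^<}$-modules, with $\envalg{\asltwo^<}$ acting on the first tensor factor only.

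With this in hand, the lemma is immediate. Expanding the second factor along the basis $\set*{v_\mu \st \mu \in \lambda}$ of $\fden{\lambda}{q}$ produces the direct sum decomposition
\begin{equation}
	\arver{\lambda}{q} = \bigoplus_{\mu \in \lambda} \envalg{\asltwo^<} v_\mu,
\end{equation}
whose summands are exactly the subspaces $\envalg{\asltwo^<} \otimes \CC v_\mu$. Distinct summands of a direct sum meet only in $0$, so $\envalg{\asltwo^<} v_\mu \cap \envalg{\asltwo^<} v_\nu = 0$ whenever $\mu \neq \nu$, as claimed. There is no genuine obstacle to overcome here; the one point demanding a little care is to confirm that the chosen basis of ground states lifts to a decomposition of the entire module into $\envalg{\asltwo^<}$-submodules, and this is precisely the freeness built into the \pbw{} theorem applied to $\asltwo^<$.
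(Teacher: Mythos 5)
Your proposal is correct and matches the paper's intent exactly: the paper offers no separate argument, stating only that the lemma is ``an immediate application of the \pbw{} theorem for $\asltwo^<$'', and your write-up simply makes explicit the freeness of $\envalg{\asltwo}$ over $\envalg{\asltwo^{\ge}}$ and the resulting decomposition $\arver{\lambda}{q} \cong \bigoplus_{\mu \in \lambda} \envalg{\asltwo^<} v_{\mu}$ that this remark alludes to.
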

\begin{lemma} \label{prop:injective}
	The action of $f_0$ on $\arver{\lambda}{q}$ is injective.
	If $\lambda \notin \nsimpset{q}$, then $e_0$ also acts injectively on $\arver{\lambda}{q}$.
\end{lemma}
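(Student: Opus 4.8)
The plan is to exploit the Poincar\'{e}--Birkhoff--Witt decomposition underlying \cref{lem:intersection}. Since $\arver{\lambda}{q}$ is induced from $\fden{\lambda}{q} = \bigoplus_{\mu \in \lambda} \CC v_{\mu}$, the PBW theorem gives $\arver{\lambda}{q} \cong \bigoplus_{\mu \in \lambda} \envalg{\asltwo^<} v_{\mu}$ with each summand freely generated over $\envalg{\asltwo^<}$. Thus every $w \in \arver{\lambda}{q}$ has a unique expansion $w = \sum_{\mu \in \lambda} U_{\mu} v_{\mu}$ with $U_{\mu} \in \envalg{\asltwo^<}$ and only finitely many $U_{\mu}$ non-zero; I call $\set*{\mu \in \lambda \st U_{\mu} \neq 0}$ the support of $w$. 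As $f_0$ and $e_0$ commute with $L_0$, I may assume $w$ is homogeneous in conformal weight. The one structural fact I use repeatedly is that $\ad(f_0)$ and $\ad(e_0)$ preserve $\envalg{\asltwo^<}$: indeed $\comm{f_0}{x_n} = \comm{f}{x}_n$ and $\comm{e_0}{x_n} = \comm{e}{x}_n$ keep the mode index $n<0$ negative, so neither bracket can produce a zero mode.

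For the injectivity of $f_0$, I push it through the PBW factor using $f_0 U_{\mu} v_{\mu} = \comm{f_0}{U_{\mu}} v_{\mu} + U_{\mu} f_0 v_{\mu}$ together with the ground-state action $f_0 v_{\mu} = v_{\mu - \sroot}$ from \eqref{eq:polyaction}. Collecting terms over a common ground state gives
\begin{equation*}
	f_0 w = \sum_{\mu \in \lambda} \brac*{\comm{f_0}{U_{\mu}} + U_{\mu + \sroot}} v_{\mu}.
\end{equation*}
Because the sum is direct and each summand is free, $f_0 w = 0$ forces $U_{\mu} = -\comm{f_0}{U_{\mu - \sroot}}$ for every $\mu \in \lambda$. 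Hence $U_{\mu} \neq 0$ implies $U_{\mu - \sroot} \neq 0$, so a non-empty support would contain the infinite descending chain $\mu, \mu - \sroot, \mu - 2\sroot, \dots$, contradicting finiteness. Therefore $w = 0$. This argument never touches the value of $q$, so it applies to every $\lambda \in \h^* / \rlat$, as claimed.

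For $e_0$, the same manipulation with $e_0 v_{\mu} = \gamma_{\mu} v_{\mu + \sroot}$ yields
\begin{equation*}
	e_0 w = \sum_{\mu \in \lambda} \brac*{\comm{e_0}{U_{\mu}} + \gamma_{\mu - \sroot} U_{\mu - \sroot}} v_{\mu},
\end{equation*}
so $e_0 w = 0$ is equivalent to $\gamma_{\mu} U_{\mu} = -\comm{e_0}{U_{\mu + \sroot}}$ for all $\mu$. Here the hypothesis $\lambda \notin \nsimpset{q}$ is exactly what I need: by \eqref{eq:polyaction} the coefficient $\gamma_{\mu} = \frac{1}{2}\sqbrac[\big]{q - \bilin{\mu}{\mu + 2\wvec}}$ vanishes precisely when $\mu$ solves the Casimir equation, and $\lambda \notin \nsimpset{q}$ says no such $\mu$ lies in $\lambda$. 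Thus $\gamma_{\mu} \neq 0$ throughout $\lambda$, and $U_{\mu} \neq 0$ now forces $U_{\mu + \sroot} \neq 0$; the support would contain an infinite ascending chain, again impossible. Hence $w = 0$.

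The computations are routine; the only points needing care are the closures $\comm{f_0}{\envalg{\asltwo^<}}, \comm{e_0}{\envalg{\asltwo^<}} \subseteq \envalg{\asltwo^<}$ (which legitimise the PBW bookkeeping) and tracking the direction of the induced chain --- descending for $f_0$, ascending for $e_0$ --- reflecting that $f_0$ lowers and $e_0$ raises the $\sltwo$-weight. Finiteness of the support then does the work, with the non-vanishing of $f_0$ (always) and of $\gamma_{\mu}$ (when $\lambda \notin \nsimpset{q}$) being precisely what prevents a chain from terminating.
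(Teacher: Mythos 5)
Your proof is correct and rests on the same foundations as the paper's: the PBW freeness of $\arver{\lambda}{q}$ over $\envalg{\asltwo^<}$, the directness of $\bigoplus_{\mu}\envalg{\asltwo^<}v_{\mu}$ (\cref{lem:intersection}), the closure of $\envalg{\asltwo^<}$ under $\ad(e_0)$ and $\ad(f_0)$, and the non-vanishing of $f_0 v_{\mu}$ (always) and $e_0 v_{\mu}$ (when $\lambda\notin\nsimpset{q}$). The only cosmetic difference is that the paper exhibits a single non-cancellable extremal term $U_1 v_{\lambda+(n_1-1)\sroot}$, whereas you turn the same coefficient identities into a recursion forcing an infinite chain in the finite support --- the same argument in contrapositive form.
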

\begin{proof}
	We only show the first assertion as the second may be proved in a~similar fashion, once we recall that the condition on $\lambda$ and $q$ implies that the ground states $v_{\mu}$ span a \emph{simple} $\sltwo$-module isomorphic to $\fden{\lambda}{q}$, hence that $e_0$ does not annihilate any of the $v_{\mu}$.

	Let $w$ be an~ arbitrary non-zero element of $\arver{\lambda}{q}$, so that $w$ has the form
	\begin{equation}
		w=\sum_{i=1}^\ell U_i v_{\lambda+n_i\sroot},
	\end{equation}
	for some $\ell\in \ZZ_{>0}$,  $U_1,\ldots,U_{\ell}\in \envalg{\asltwo^{<}} \setminus \set{0}$ and $n_1 < \dots < n_\ell\in\ZZ$.
	Since $\comm{f_0}{U_i} \in \envalg{\asltwo^{<}}$, for each $i$, and $f_0$ does not annihilate any of the $v_{\mu}$, we see that
	\begin{equation}
		f_0 w = \sum_{i=1}^\ell \brac*{U_i v_{\lambda+(n_i-1)\sroot} + \comm*{f_0}{U_i} v_{\lambda+n_i\sroot}}
		\in U_1 v_{\lambda+(n_1-1)\sroot} + \bigoplus_{m \ge n_1} \envalg{\asltwo^{<}}v_{\lambda+m\sroot}.
	\end{equation}
	As $U_1 \neq 0$, the term $U_1 v_{\lambda+(n_1-1)\sroot}$ is non-zero.  Moreover, it cannot be cancelled by any of the other terms, by \cref{lem:intersection}.  Thus, $f_0 w \neq 0$ as desired.
\end{proof}
\begin{lemma} \label{lem:inj=stringy}
	If $e_0$ and $f_0$ both act injectively on an indecomposable level-$\kk$ weight module $\aMod{M}$ of $\asltwo$, then $\aMod{M}$ is stringy.
\end{lemma}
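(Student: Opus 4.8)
The plan is to leverage that the zero modes $e_0$ and $f_0$ commute with $L_0$, so that their injectivity pins down the weight multiplicities grade by grade. First I would reduce to the case in which the weight support of $\aMod{M}$ lies in a single coset of $\h^* / \rlat$: since every mode $x_n$ of $\asltwo$ alters the $\sltwo$-weight only by an integer multiple of $\sroot$, the sum of weight spaces over each fixed coset of $\h^*/\rlat$ is a submodule. Hence $\aMod{M}$ splits as a direct sum indexed by $\h^* / \rlat$, and indecomposability confines the support to a single coset $[\nu]$, whose weights are precisely the $\nu + m \sroot$ with $m \in \ZZ$.

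Next I would exploit the commutation relations $\comm{L_0}{e_0} = \comm{L_0}{f_0} = 0$ together with $\comm{h_0}{e_0} = 2 e_0$ and $\comm{h_0}{f_0} = -2 f_0$. These show that $e_0$ and $f_0$ preserve the (generalised) $L_0$-eigenvalue $n$ while shifting the $\sltwo$-weight by $+\sroot$ and $-\sroot$, respectively, and so restrict to linear maps $e_0 \colon \aMod{M}(\mu, n) \to \aMod{M}(\mu + \sroot, n)$ and $f_0 \colon \aMod{M}(\mu + \sroot, n) \to \aMod{M}(\mu, n)$. Because $e_0$ and $f_0$ are injective on all of $\aMod{M}$ and respect the weight grading, these restrictions inherit injectivity (any weight vector in the kernel is killed globally, hence zero). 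As the weight spaces are finite-dimensional, the two injections yield $\dim \aMod{M}(\mu, n) \le \dim \aMod{M}(\mu + \sroot, n)$ and $\dim \aMod{M}(\mu + \sroot, n) \le \dim \aMod{M}(\mu, n)$, forcing equality for every $\mu$ and $n$.

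Finally, iterating the equality $\dim \aMod{M}(\mu, n) = \dim \aMod{M}(\mu + \sroot, n)$ shows that $\dim \aMod{M}(\nu + m \sroot, n)$ is independent of $m$ for each fixed conformal grade $n$; summing over $n$ then gives that $\fsfn{\nu + m \sroot}{\aMod{M}}{\qq}$ is independent of $m$. Since the support of $\aMod{M}$ is the single coset $[\nu]$, every non-zero string function equals this common value, so $\aMod{M}$ is stringy. I expect no serious obstacle here: the lemma is a formal consequence of injectivity and the vanishing of $\comm{L_0}{e_0}$ and $\comm{L_0}{f_0}$. The only points requiring a moment's care are the coset decomposition furnished by indecomposability and the descent of injectivity from $\aMod{M}$ to its individual finite-dimensional weight spaces, both of which follow directly from the weight grading.
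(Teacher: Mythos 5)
Your proposal is correct and follows essentially the same argument as the paper: both reduce to a single coset of $\h^* / \rlat$ via indecomposability and then use the injectivity of $e_0$ and $f_0$ on the finite-dimensional weight spaces $\aMod{M}(\mu,n)$ to force the two opposing dimension inequalities into an equality. Your write-up merely spells out in more detail the weight-preservation properties of $e_0$ and $f_0$ that the paper takes as read.
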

\begin{proof}
	Recall that the weight spaces $\aMod{M}(\mu,n)$, for $\mu \in \h^*$ and $n \in \CC$, are always finite-dimensional, by definition.  As $e_0 \colon \aMod{M}(\mu,n) \to \aMod{M}(\mu+\sroot,n)$ is assumed to act injectively, we have $\dim \aMod{M}(\mu,n) \le \dim \aMod{M}(\mu+\sroot,n)$.  Similarly, $f_0 \colon \aMod{M}(\mu+\sroot,n) \to \aMod{M}(\mu,n)$ acting injectively implies that $\dim \aMod{M}(\mu,n) \ge \dim \aMod{M}(\mu+\sroot,n)$.  The stringiness of $\aMod{M}$ now follows because indecomposability implies that the $\aMod{M}(\mu,n)$ are zero unless $\mu$ belongs to a unique coset $\lambda \in \h^* / \rlat$.
\end{proof}
The desired stringiness result is now easy to prove.
\begin{theorem} \label{thm:simplestringy}
	Let $q \in \CC$ and $\lambda \notin \nsimpset{q}$.  Then, the simple relaxed \hwm{} $\arel{\lambda}{q}$ is stringy.
\end{theorem}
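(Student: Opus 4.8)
The plan is to deduce the stringiness of the simple module $\arel{\lambda}{q}$ from the injectivity of the zero modes $e_0$ and $f_0$ supplied by \cref{prop:injective}, and then to invoke \cref{lem:inj=stringy}. Since $\lambda \notin \nsimpset{q}$, the module $\arel{\lambda}{q}$ is simple and hence indecomposable, so \cref{lem:inj=stringy} applies the moment we know that $e_0$ and $f_0$ act injectively on $\arel{\lambda}{q}$ itself. The catch is that \cref{prop:injective} only provides injectivity on the relaxed Verma module $\arver{\lambda}{q}$, and injectivity need not descend to the simple quotient $\arel{\lambda}{q} = \arver{\lambda}{q} / \armax{\lambda}{q}$ without extra input.

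First I would upgrade injectivity to bijectivity on $\arver{\lambda}{q}$. The relaxed Verma modules are already known to be stringy, their characters having been computed explicitly in \cref{sec:stringsl2}, so each weight space $\arver{\lambda}{q}(\mu,\Delta+n)$ is finite-dimensional with dimension independent of $\mu \in \lambda$. As $f_0 \colon \arver{\lambda}{q}(\mu,\Delta+n) \to \arver{\lambda}{q}(\mu-\sroot,\Delta+n)$ is then an injection between finite-dimensional spaces of equal dimension, it is in fact a bijection; the same reasoning applies to $e_0$.

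The heart of the argument is to establish the analogous bijectivity on the maximal submodule $\armax{\lambda}{q}$. Being a submodule, $\armax{\lambda}{q}$ is stable under $e_0$ and $f_0$, all of its weights lie in the single coset $\lambda$, and its weight spaces are finite-dimensional as subspaces of those of $\arver{\lambda}{q}$. The restrictions of $e_0$ and $f_0$ remain injective, so the dimension-counting in the proof of \cref{lem:inj=stringy} (injectivity of $e_0$ raising the weight, and of $f_0$ lowering it) forces $\dim \armax{\lambda}{q}(\mu,\Delta+n)$ to be constant along each $\sroot$-string. Hence $e_0$ and $f_0$ restrict to bijections on $\armax{\lambda}{q}$ too.

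With $e_0$ and $f_0$ bijective on both $\arver{\lambda}{q}$ and its submodule $\armax{\lambda}{q}$, the induced maps on $\arel{\lambda}{q}$ are injective: if $f_0 v \in \armax{\lambda}{q}$, then surjectivity of $f_0$ on $\armax{\lambda}{q}$ lets us write $f_0 v = f_0 w$ with $w \in \armax{\lambda}{q}$, whence $v = w \in \armax{\lambda}{q}$ by injectivity on $\arver{\lambda}{q}$; the argument for $e_0$ is identical. \cref{lem:inj=stringy} then gives the claim. I expect the main obstacle to be exactly this control of $\armax{\lambda}{q}$: injectivity on a simple quotient can fail in general, and it is the equality of string dimensions of $\armax{\lambda}{q}$, forced by the two-sided injectivity of the zero modes, that makes the descent go through. (Equivalently, one could bypass the quotient-injectivity step and simply subtract: since $\arver{\lambda}{q}$ and $\armax{\lambda}{q}$ both have constant dimensions along $\sroot$-strings, so does their difference $\arel{\lambda}{q}$, which is precisely stringiness.)
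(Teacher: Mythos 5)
Your proposal is correct and follows essentially the same route as the paper: both rest on \cref{prop:injective} giving injectivity of $e_0$ and $f_0$ on $\armax{\lambda}{q}$, whence \cref{lem:inj=stringy} makes $\armax{\lambda}{q}$ stringy, and the stringiness of $\arel{\lambda}{q}$ follows by comparison with the (already stringy) relaxed Verma module. The subtraction argument you relegate to your final parenthetical is in fact the paper's entire proof; your longer main line via bijectivity and injectivity on the quotient is valid but not needed.
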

\begin{proof}
	As $e_0$ and $f_0$ both act injectively on the maximal proper submodule $\armax{\lambda}{q} \subset \arver{\lambda}{q}$, by \cref{prop:injective}, it follows that $\armax{\lambda}{q}$ is stringy, by \cref{lem:inj=stringy}.  But, $\arver{\lambda}{q}$ is stringy (\cref{sec:stringsl2}), so we conclude that $\arel{\lambda}{q}=\arver{\lambda}{q} \big/ \armax{\lambda}{q}$ is too.
\end{proof}

\subsection{Computing the string functions} \label{sec:compstrfnsl2}

\cref{thm:simplestringy} says that the simple $\arel{\lambda}{q}$ are stringy, but we do not yet have a means to actually compute their string functions.  For this, we shall combine this result with \cref{thm:limstrfnsexist}, concluding that the string functions of the simple $\arel{\lambda}{q}$ coincide with the positive limiting string function of the non-simple ones.  As we shall see, the latter are computable in principle.

\begin{lemma} \label{lem:simpquot}
	Let $\lambda \in \nsimpset{q}$ and take $\mu$ to be the maximal solution in $\lambda$ of $\bilin{\mu}{\mu+2\wvec} = q$.  Then, $\airr{\mu+\sroot}^-$ is the unique simple quotient of both $\arver{\lambda}{q}$ and $\arel{\lambda}{q}$.
\end{lemma}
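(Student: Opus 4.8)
The plan is to reduce the statement to standard lowest-weight theory over $\asltwo$ by transporting the known $\sltwo$-module structure of the ground states $\fden{\lambda}{q}$ up to the affine level via induction. First I would record that both modules have a \emph{unique} simple quotient. Recall from \cref{sec:rhwsl2} that $\arver{\lambda}{q}$ is generated by a single ground state $v_\nu$ with $\nu > \mu$: such a $v_\nu$ generates $\fden{\lambda}{q}$ as an $\sltwo$-module, since $f$ acts injectively (\cref{sec:densesl2}) and $e v_{\nu'} \neq 0$ for every $\nu' > \mu$ in $\lambda$ (as $\mu$ is the maximal solution of $\bilin{\mu}{\mu+2\wvec}=q$, so $\gamma_{\nu'} \neq 0$ there), while the ground states generate the induced module. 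Since $h_0$ and $L_0$ act semisimply on $\arver{\lambda}{q}$, its submodules are graded by the joint $h_0$- and $L_0$-eigenspaces, and the weight space $\arver{\lambda}{q}(\nu,\Delta) = \CC v_\nu$ is one-dimensional. A graded submodule $N$ is then proper if and only if $N(\nu,\Delta)=0$, and because $(N_1+N_2)(\nu,\Delta)=N_1(\nu,\Delta)+N_2(\nu,\Delta)$ the sum of proper submodules stays proper; hence $\arver{\lambda}{q}$ has a unique maximal proper submodule $\armax{\lambda}{q}$ and a unique simple quotient. The same grading argument shows that $\arint{\lambda}{q}$, being a sum of submodules meeting the ground states trivially, has $\arint{\lambda}{q}(\cdot,\Delta)=0$ and is therefore proper, so $\arint{\lambda}{q} \subseteq \armax{\lambda}{q}$ and $\arel{\lambda}{q}=\arver{\lambda}{q}/\arint{\lambda}{q}$ has the same unique simple quotient $\arver{\lambda}{q}/\armax{\lambda}{q}$. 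It thus remains only to identify this module.

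Second I would exhibit a surjection onto $\airr{\mu+\sroot}^-$. From the structure of the non-simple dense modules in \cref{sec:densesl2}, in both cases $\sqrt{1+2q}\notin\ZZ\setminus\set{0}$ and $\sqrt{1+2q}\in\ZZ\setminus\set{0}$ the unique maximal $\sltwo$-submodule of $\fden{\lambda}{q}$ is $\fver{\mu}^+$, with quotient the lowest-weight Verma module $\fver{\mu+\sroot}^-$. I would regard the quotient map $\fden{\lambda}{q}\twoheadrightarrow\fver{\mu+\sroot}^-$ as a morphism of $\asltwo^{\ge}$-modules, both carrying $K=\kk$, $L_0=\Delta$ and $\asltwo^>$ acting as zero, and then apply the relaxed induction functor $\envalg{\asltwo}\otimes_{\asltwo^{\ge}}(-)$. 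Since $\envalg{\asltwo}$ is free over $\envalg{\asltwo^{\ge}}$ by the \pbw{} theorem, this functor is (right) exact, so it preserves surjections and yields $\arver{\lambda}{q}\twoheadrightarrow\aver{\mu+\sroot}^-$, using the identification from \cref{sec:rhwsl2} of the relaxed induction of $\fver{\mu+\sroot}^-$ with $\aver{\mu+\sroot}^-$. Composing with the canonical map of $\aver{\mu+\sroot}^-$ onto its simple quotient produces a surjection $\arver{\lambda}{q}\twoheadrightarrow\airr{\mu+\sroot}^-$.

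Combining the two steps finishes the argument: the surjection of the previous paragraph exhibits $\airr{\mu+\sroot}^-$ as a simple quotient of $\arver{\lambda}{q}$, so by the uniqueness established first, $\arver{\lambda}{q}/\armax{\lambda}{q}\cong\airr{\mu+\sroot}^-$; and since $\arint{\lambda}{q}\subseteq\armax{\lambda}{q}$, the surjection factors through $\arel{\lambda}{q}$, making $\airr{\mu+\sroot}^-$ its unique simple quotient as well. I expect the only genuinely delicate point to be the passage from the ground-state ($\sltwo$) surjection to the affine one: one must verify that the chosen $\sltwo$-quotient map is honestly $\asltwo^{\ge}$-equivariant and that relaxed induction carries it to a surjection with target exactly $\aver{\mu+\sroot}^-$. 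The grading-based uniqueness of the maximal submodule, while routine, is the other ingredient requiring care, precisely because $\arver{\lambda}{q}$ is not a highest-weight module in the usual sense and so the uniqueness must be extracted from one-dimensionality of the top weight space rather than from a single generating highest-weight vector.
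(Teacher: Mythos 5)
Your proposal is correct and follows essentially the same route as the paper: the key step in both is to apply the right-exact induction functor to the $\sltwo$-surjection $\fden{\lambda}{q} \twoheadrightarrow \fver{\mu+\sroot}^-$ coming from the composition series of \cref{sec:densesl2}, obtain $\arver{\lambda}{q} \twoheadrightarrow \aver{\mu+\sroot}^- \twoheadrightarrow \airr{\mu+\sroot}^-$, and conclude by uniqueness of the maximal proper submodule together with $\arint{\lambda}{q} \subseteq \armax{\lambda}{q}$. You merely spell out in more detail the uniqueness argument (via the one-dimensionality of the generating weight space) that the paper compresses into the remark that $\arver{\lambda}{q}$ and $\arel{\lambda}{q}$ are cyclic.
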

\begin{proof}
	Recall from \cref{sec:densesl2} that $\fver{\mu+\sroot}^-$ is a quotient of $\fden{\lambda}{q}$.  As induction is a tensor functor, it is right-exact, hence $\aver{\mu+\sroot}^-$ is a quotient of $\arver{\lambda}{q}$.  It follows that the irreducible $\airr{\mu+\sroot}^-$ is also a quotient of $\arver{\lambda}{q}$, necessarily by the (unique) maximal proper submodule $\armax{\lambda}{q}$.  This establishes the statement for $\arver{\lambda}{q}$ and that for $\arel{\lambda}{q}$ is obtained by noting that
	\begin{equation}
		\frac{\arel{\lambda}{q}}{\armax{\lambda}{q} \big/ \arint{\lambda}{q}}
		\cong \frac{\arver{\lambda}{q} \big/ \arint{\lambda}{q}}{\armax{\lambda}{q} \big/ \arint{\lambda}{q}}
		\cong \frac{\arver{\lambda}{q}}{\armax{\lambda}{q}}
		\cong \airr{\mu+\sroot}^-,
	\end{equation}
	remembering that $\arel{\lambda}{q}$ is cyclic.
\end{proof}

\begin{proposition} \label{lem:nsimplimstrfn}
	The limiting string function of $\arel{\lambda}{q}$, $\lambda \in \nsimpset{q}$, is
	\begin{equation} \label{eq:limsfn}
		\fsfn{\infty}{\arel{\lambda}{q}}{\qq} = \fsfn{\infty}{\airr{\mu+\sroot}^-}{\qq},
	\end{equation}
	where $\mu$ is the maximal solution in $\lambda$ of $\bilin{\mu}{\mu+2\wvec} = q$.
\end{proposition}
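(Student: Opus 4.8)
The plan is to realise both limiting string functions as limits of weight-space dimensions of a common relaxed Verma module and to show that these dimensions already agree at every weight sufficiently high up the lattice within each fixed conformal grade. First I would record the two relevant presentations as quotients of $\arver{\lambda}{q}$: by construction $\arel{\lambda}{q} = \arver{\lambda}{q} / \arint{\lambda}{q}$, while \cref{lem:simpquot} identifies $\airr{\mu+\sroot}^- \cong \arver{\lambda}{q} / \armax{\lambda}{q}$. Since $\sroot \in \rlat$, both quotients have weight support inside the single coset $\lambda = [\mu]$, so their string functions at the weights $\mu + m\sroot$ are directly comparable and both limiting string functions may be taken with respect to the same reference weight $\mu$.

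Next I would pass to weight-space dimensions at a fixed conformal grade $\Delta+n$. Because $\arint{\lambda}{q}$ and $\armax{\lambda}{q}$ are weight submodules, for each $m \in \ZZ$ and $n \in \ZZ_{\ge 0}$ we have
\begin{align*}
	\dim \arel{\lambda}{q}(\mu+m\sroot, \Delta+n) &= \dim \arver{\lambda}{q}(\mu+m\sroot, \Delta+n) - \dim \arint{\lambda}{q}(\mu+m\sroot, \Delta+n), \\
	\dim \airr{\mu+\sroot}^-(\mu+m\sroot, \Delta+n) &= \dim \arver{\lambda}{q}(\mu+m\sroot, \Delta+n) - \dim \armax{\lambda}{q}(\mu+m\sroot, \Delta+n).
\end{align*}
The key input is \cref{lem:I=J}: for $m > n$ the submodules $\arint{\lambda}{q}$ and $\armax{\lambda}{q}$ coincide on the weight space $(\mu+m\sroot, \Delta+n)$, so the two subtracted terms are equal there. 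Consequently, the coefficient of $\qq^{\Delta+n}$ in $\fsfn{\mu+m\sroot}{\arel{\lambda}{q}}{\qq}$ equals the coefficient of $\qq^{\Delta+n}$ in $\fsfn{\mu+m\sroot}{\airr{\mu+\sroot}^-}{\qq}$ for every $m > n$.

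Finally I would take $m \to \infty$ grade by grade. Fixing $n$, the two coefficient sequences agree for all $m > n$, so they share the same limit as soon as one of them converges. Convergence of the $\arel{\lambda}{q}$ sequence is precisely the content of \cref{thm:limstrfnsexist}, so the $\airr{\mu+\sroot}^-$ sequence converges as well and to the same value; this simultaneously supplies the existence of $\fsfn{\infty}{\airr{\mu+\sroot}^-}{\qq}$ required for the limiting string function to be defined. Assembling these equalities over all grades $n$ yields $\fsfn{\infty}{\arel{\lambda}{q}}{\qq} = \fsfn{\infty}{\airr{\mu+\sroot}^-}{\qq}$, which is the claim.

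The only delicate point is the interplay between the direction of the limit and the hypothesis $m > n$ of \cref{lem:I=J}: that lemma controls the two submodules only high up in the weight lattice relative to the grade. This is harmless, however, because for each fixed grade $n$ the condition $m > n$ holds for all large $m$, which is exactly the regime the positive limiting string function probes. I therefore expect no genuine obstacle here; the existence half of the statement is taken care of automatically by the eventual equality of coefficients with a convergent sequence.
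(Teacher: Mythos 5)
Your proposal is correct and follows essentially the same route as the paper: both proofs combine \cref{lem:simpquot} (identifying $\airr{\mu+\sroot}^-$ as $\arver{\lambda}{q}/\armax{\lambda}{q}$) with \cref{lem:I=J} to equate the weight-space dimensions of the two quotients at $(\mu+m\sroot,\Delta+n)$ for $m>n$, then let $m\to\infty$ grade by grade. Your explicit remark that \cref{thm:limstrfnsexist} supplies the convergence needed to make sense of both limits is a small but welcome clarification that the paper leaves implicit.
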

\begin{proof}
	Choose non-negative integers $m$ and $n$ satisfying $m>n$.  Then, \cref{lem:I=J,lem:simpquot} give
	\begin{align}
		\dim \arel{\lambda}{q}(\mu + m \sroot,\Delta+n) &= \dim \arver{\lambda}{q}(\mu + m \sroot,\Delta+n) - \dim \arint{\lambda}{q}(\mu + m \sroot,\Delta+n) \\
		&= \dim \arver{\lambda}{q}(\mu + m \sroot,\Delta+n) - \dim \armax{\lambda}{q}(\mu + m \sroot,\Delta+n) = \dim \airr{\mu + \sroot}^-(\mu + m \sroot,\Delta+n) \notag
	\end{align}
	and the desired identity of limiting string functions follows.
\end{proof}
\begin{remark}
	Recall that $\bilin{\mu}{\mu+2\wvec} = q$ has two solutions $\mu_{\pm} \in \h^*$, given in \eqref{eq:sols}, that satisfy $\mu_+ + \mu_- = -\sroot$.  When $\sqrt{1+2q} \notin \ZZ$, the cosets $\lambda_+ = [\mu_+]$ and $\lambda_- = [\mu_-]$ are distinct elements of $\nsimpset{q}$, hence \eqref{eq:limsfn} applies to both.  We must therefore have
	\begin{equation} \label{eq:nice}
		\fsfn{\infty}{\airr{\mu_{\pm}+\sroot}^-}{\qq} = \fsfn{\infty}{\arel{\lambda_{\pm}}{q}}{\qq} = \fsfn{\infty}{\arel{\lambda_{\mp}}{q}}{\qq} = \fsfn{\infty}{\airr{\mu_{\mp}+\sroot}^-}{\qq} = \fsfn{\infty}{\airr{-\mu_{\pm}}^-}{\qq},
	\end{equation}
	by \cref{thm:limstrfnsexist}.
\end{remark}

Combining \cref{lem:nsimplimstrfn,eq:nice} with \cref{thm:limstrfnsexist,thm:simplestringy}, we now deduce the string functions of the simple $\arel{\lambda}{q}$.
\begin{theorem} \label{thm:simpstrfns}
	If $\sqrt{1+2q} \notin \ZZ$, then the non-zero string functions of the simple relaxed \hwms{} $\arel{\lambda}{q}$, $\lambda \notin \nsimpset{q}$, have the form
	\begin{equation} \label{eq:simlim=lim}
		\fsfn{\nu}{\arel{\lambda}{q}}{\qq} = \fsfn{\infty}{\airr{\mu+\sroot}^-}{\qq}, \qquad \text{for all $\nu \in \lambda$,}
	\end{equation}
	where $\mu$ is any solution of $\bilin{\lambda}{\lambda+2\wvec} = q$.  If $\sqrt{1+2q} \in \ZZ$, then the same is true when $\mu$ is the maximal such solution.
\end{theorem}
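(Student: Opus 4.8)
The plan is to assemble the theorem from the limiting-string-function machinery already in place; no new estimates are needed. First I would invoke \cref{thm:simplestringy}: since $\lambda \notin \nsimpset{q}$, the module $\arel{\lambda}{q}$ is stringy, so all of its non-zero string functions coincide and in particular equal the positive limiting string function,
\begin{equation}
	\fsfn{\nu}{\arel{\lambda}{q}}{\qq} = \fsfn{\infty}{\arel{\lambda}{q}}{\qq}, \qquad \text{for all } \nu \in \lambda.
\end{equation}
Next, \cref{thm:limstrfnsexist} guarantees that this positive limiting string function is independent of the coset, so I may replace $\lambda$ by any coset whatsoever, in particular by a non-simple one. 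Choosing $\lambda' \in \nsimpset{q}$ and applying \cref{lem:nsimplimstrfn} then yields $\fsfn{\infty}{\arel{\lambda'}{q}}{\qq} = \fsfn{\infty}{\airr{\mu+\sroot}^-}{\qq}$, where $\mu$ is the maximal solution in $\lambda'$ of $\bilin{\mu}{\mu+2\wvec} = q$. Chaining these three identities produces \eqref{eq:simlim=lim}.

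The remaining work is bookkeeping over the two cases. When $\sqrt{1+2q} \in \ZZ$, both solutions $\mu_{\pm}$ of \eqref{eq:sols} lie in a single coset, so $\nsimpset{q}$ is a singleton and its maximal solution $\mu$ is forced; this is exactly the $\mu$ appearing in \cref{lem:nsimplimstrfn}, which is why the statement must demand the maximal solution here. When $\sqrt{1+2q} \notin \ZZ$, the two solutions lie in distinct cosets $\lambda_{\pm} = [\mu_{\pm}]$, each containing only one solution, so \cref{lem:nsimplimstrfn} applies to both with maximal solutions $\mu_{\pm}$ respectively. The consistency identity \eqref{eq:nice} from the preceding remark then shows $\fsfn{\infty}{\airr{\mu_{+}+\sroot}^-}{\qq} = \fsfn{\infty}{\airr{\mu_{-}+\sroot}^-}{\qq}$, so either choice yields the same string function; this licenses the phrase ``any solution'' in the statement.

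The closest thing to an obstacle is ensuring that these are literal equalities of generalised formal power series rather than merely asymptotic ones. This is already secured: stringiness (\cref{thm:simplestringy}) makes the $\arel{\lambda}{q}$ string function genuinely constant in $\nu$, hence equal to its own limit coefficientwise in the sense of \eqref{eq:gfpslim}, while \cref{lem:rankconst}, underlying \cref{thm:limstrfnsexist}, guarantees that the relevant ranks stabilise exactly for large enough weight. With both the stringiness and the $\lambda$-independence of the limit in hand, the chain of equalities above is exact and the theorem follows at once.
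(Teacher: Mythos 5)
Your proposal is correct and follows essentially the same route as the paper, which deduces \cref{thm:simpstrfns} precisely by combining \cref{thm:simplestringy} (stringiness, so the constant string function equals its positive limit), \cref{thm:limstrfnsexist} ($\lambda$-independence of that limit), \cref{lem:nsimplimstrfn} (evaluation of the limit via a non-simple coset) and \eqref{eq:nice} (to justify ``any solution'' when $\sqrt{1+2q} \notin \ZZ$). The case bookkeeping you give matches the paper's treatment of $\nsimpset{q}$ having one or two elements.
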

\begin{remark}
	The irreducible $\asltwo$-modules $\airr{\nu}^-$ and $\airr{-\nu}^+$ are related by the conjugation functor $\conj$.  It follows that the positive limiting string function of one must match the negative limiting string function of the other.  We may therefore replace the \rhs{} of \eqref{eq:simlim=lim} with the negative limiting string function $\tfsfn{-\infty}{\airr{-\mu-\sroot}^+}{\qq}$.  Moreover, when $\sqrt{1+2q} \notin \ZZ$, we may instead replace this by $\tfsfn{-\infty}{\airr{\mu}^+}{\qq}$, by \eqref{eq:nice}.
\end{remark}

While \cref{thm:limstrfnsexist} assures us that the limiting string functions of the simple \hw{} $\asltwo$-modules appearing on the \rhs{} of \eqref{eq:simlim=lim} actually exist, it is perhaps comforting and useful to see this directly.  One way to approach this is to note, as in \cref{prop:vermasfnsl2}, that these limiting string functions also exist for Verma modules over $\asltwo$.  Indeed, $f_0$ acts injectively on the ground states of $\airr{\nu}^+$, $\nu \notin \wlat_{\ge}$, so the argument used in the proof of \cref{prop:injective} shows that $f_0$ acts injectively on all of $\airr{\nu}^+$.  The string functions $\tsfn{\xi}{\airr{\nu}^+}$ therefore increase monotonically as $\xi \to -\infty$, while they are bounded above by the limiting string function of $\aver{\nu}^+$.

\subsection{Stringiness of the non-simple $\arel{\lambda}{q}$} \label{sec:nsimpstrsl2}

While our first main aim, to compute the characters of the simple $\arel{\lambda}{q}$, was essentially completed in \cref{thm:simpstrfns}, it is now straightforward to also establish the stringiness of the non-simple $\arel{\lambda}{q}$ and thereby determine their characters.  We shall also discuss the structure of these $\asltwo$-modules.

\begin{lemma} \label{lem:simplesubmod}
	Let $\lambda \in \nsimpset{q}$ and take $\mu$ to be the maximal solution in $\lambda$ of $\bilin{\mu}{\mu+2\wvec} = q$.  Then, $\arel{\lambda}{q}$ has a simple submodule isomorphic to $\airr{-\mu-\sroot}^+$, if $\sqrt{1+2q} \in \ZZ$, and to $\airr{\mu}^+$ otherwise.
\end{lemma}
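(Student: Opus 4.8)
The plan is to use the explicit composition series of the dense $\sltwo$-module $\fden{\lambda}{q}$ recorded in \cref{sec:densesl2} and to transport the relevant simple submodule of the ground states up to $\arel{\lambda}{q}$ via induction and the passage to the quotient by $\arint{\lambda}{q}$. Recall that when $\lambda \in \nsimpset{q}$ and $\mu$ is the maximal solution in $\lambda$ of $\bilin{\mu}{\mu+2\wvec}=q$, the space of ground states $\fden{\lambda}{q}$ contains a \hw{} Verma submodule whose generating \hwv{} is $v_{\mu}$ (when $\sqrt{1+2q}\notin\ZZ$) or $v_{-\mu-\sroot}$ (when $\sqrt{1+2q}\in\ZZ$); indeed the composition series given in \cref{sec:densesl2} shows that the maximal \hw{} Verma submodule of $\fden{\lambda}{q}$ is $\fver{\mu}^+$ in the first case and $\fver{-\mu-\sroot}^+$ in the second, the latter being a simple submodule.

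First I would treat the generic case $\sqrt{1+2q}\notin\ZZ$. Here $\fver{\mu}^+$ is a submodule of $\fden{\lambda}{q}$ and the \hwv{} $v_{\mu}$ is killed by $e_0$. Inducing, $v_{\mu}$ becomes a relaxed \hwv{} in $\arver{\lambda}{q}$ that is annihilated by $e_0$ and by all of $\asltwo^{>}$, hence generates a genuine \hwm{}, namely a quotient of $\aver{\mu}^+$. The key point is to identify the image of this submodule in the quotient $\arel{\lambda}{q}=\arver{\lambda}{q}/\arint{\lambda}{q}$. Because $v_{\mu}$ is a ground state and $\arint{\lambda}{q}$ by definition has zero intersection with the space of ground states, $v_{\mu}$ survives to a nonzero \hwv{} in $\arel{\lambda}{q}$; the submodule it generates is a nonzero \hw{} quotient of $\aver{\mu}^+$, and I would argue it is the \emph{simple} quotient $\airr{\mu}^+$ by showing that the maximal proper submodule of $\aver{\mu}^+$ maps into $\arint{\lambda}{q}$. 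The cleanest route is to observe that this maximal submodule intersects the ground states trivially (it sits in strictly higher weights than $v_{\mu}$ within the \hw{} Verma module structure), so its image lies in $\arint{\lambda}{q}$ and is quotiented out.

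For the integral case $\sqrt{1+2q}\in\ZZ$ the same strategy applies with $v_{\mu}$ replaced by $v_{-\mu-\sroot}$: from the composition series, $\fver{-\mu-\sroot}^+$ is a \emph{simple} submodule of $\fden{\lambda}{q}$, so $v_{-\mu-\sroot}$ is a \hwv{} generating a copy of (a quotient of) $\aver{-\mu-\sroot}^+$ in $\arver{\lambda}{q}$, and passing to $\arel{\lambda}{q}$ yields $\airr{-\mu-\sroot}^+$ by the same ground-state survival argument. I expect the main obstacle to be the identification step: proving that the induced \hw{} submodule really collapses to its \emph{simple} quotient inside $\arel{\lambda}{q}$, i.e.\ that no proper relations beyond those coming from the maximal submodule of the corresponding $\aver{\bullet}^+$ are imposed, and symmetrically that nothing more is killed. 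This is where I would lean on \cref{lem:I=J} together with the injectivity of $f_0$ from \cref{prop:injective}, since controlling exactly which vectors lie in $\arint{\lambda}{q}$ versus $\armax{\lambda}{q}$ requires precisely the ground-state separation those results provide. A secondary subtlety is verifying that $v_{-\mu-\sroot}$ (resp.\ $v_{\mu}$) is nonzero and genuinely a \hwv{} after projection, which again reduces to the fact that $\arint{\lambda}{q}$ avoids the ground states.
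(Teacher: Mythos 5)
Your strategy is the same as the paper's: locate the \hw{} Verma submodule $\fver{\mu}^+$ (resp.\ $\fver{-\mu-\sroot}^+$) inside the ground states, induce, and identify the image in $\arel{\lambda}{q}$ by comparing $\aver{\mu}^+ \cap \arint{\lambda}{q}$ with the maximal proper submodule $\aMod{M}$ of $\aver{\mu}^+$.  The one step where your justification is too loose is the parenthetical claim that $\aMod{M}$ ``sits in strictly higher weights than $v_{\mu}$'': for a general Verma $\asltwo$-module this is false --- when the highest weight is dominant integral, the maximal proper submodule contains ground states (the singular vector $f_0^{\mu(h)+1} v_{\mu}$ has the same conformal weight as $v_{\mu}$).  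What actually forces $\aMod{M}$ to miss the ground states here is that the ground states of $\aver{\mu}^+$ form the \emph{simple} $\sltwo$-module $\fver{\mu}^+$, which holds because $\mu \notin \wlat_{\ge}$ (by \eqref{eq:sols} when $\sqrt{1+2q} \notin \ZZ$, and because $-\mu-\sroot$ has negative Dynkin label when $\sqrt{1+2q} \in \ZZ$); any proper submodule meeting them would contain $v_{\mu}$ and hence be everything.  With that supplied, both inclusions $\aMod{M} \subseteq \aver{\mu}^+ \cap \arint{\lambda}{q}$ and its reverse follow exactly as you indicate, giving $\airr{\mu}^+ \cong \aver{\mu}^+ \big/ \brac[\big]{\aver{\mu}^+ \cap \arint{\lambda}{q}} \ira \arel{\lambda}{q}$.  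Two smaller remarks: the paper observes that $v_{\mu}$ generates an honest copy of $\aver{\mu}^+$ in $\arver{\lambda}{q}$ (not merely a quotient), since $\envalg{\asltwo^<}$ and $f_0$ act freely by the \pbw{} theorem, which streamlines the identification; and your anticipated appeals to \cref{lem:I=J} and \cref{prop:injective} are not needed --- the definition of $\arint{\lambda}{q}$ as the sum of submodules avoiding the ground states does all the work.
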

\begin{proof}
	Recall from \cref{sec:densesl2} that $\fden{\lambda}{q}$ has a simple submodule isomorphic to $\fver{-\mu-\sroot}^+$, if $\sqrt{1+2q} \in \ZZ$, and to $\fver{\mu}^+$ otherwise.  Let us assume that $\sqrt{1+2q} \notin \ZZ$ for simplicity.  Then, upon inducing to $\arver{\lambda}{q}$, the ground state $v_{\mu}$ becomes a \hwv{} for $\asltwo$, hence it generates a copy of $\aver{\mu}^+$ (as $\envalg{\asltwo^<}$ and $f_0$ act freely).  Now, the maximal proper submodule $\aMod{M}$ of $\aver{\mu}^+$ has zero intersection with the space of ground states, hence $\aMod{M} \subset \arint{\lambda}{q}$.  Indeed, the space $\fver{\mu}^+$ of ground states of $\aver{\mu}^+$ is simple, since $\mu \notin \wlat_{\ge}$, and so $\aMod{M} = \aver{\mu}^+ \cap \arint{\lambda}{q}$.  Thus,
	\begin{equation}
		\airr{\mu}^+
		\cong \frac{\aver{\mu}^+}{\aMod{M}}
		\cong \frac{\aver{\mu}^+}{\aver{\mu}^+ \cap \arint{\lambda}{q}}
		\lira \frac{\arver{\lambda}{q}}{\arint{\lambda}{q}}
		\cong \arel{\lambda}{q},
	\end{equation}
	as required.  If $\sqrt{1+2q} \in \ZZ$, then the argument goes through with $-\mu-\sroot$ replacing $\mu$ throughout.
\end{proof}
\begin{remark}
	Note that for the special case $\sqrt{1+2q} = 0$, we have $\mu = -\wvec$ and thus $-\mu-\sroot$ and $\mu$ coincide.
\end{remark}
\begin{theorem} \label{thm:nsimpstrfns}
	If $\lambda \in \nsimpset{q}$, then, $\arel{\lambda}{q}$ is stringy and its non-zero string functions are given by \eqref{eq:simlim=lim}.
\end{theorem}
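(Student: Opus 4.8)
The plan is to sandwich the string functions $\fsfn{\nu}{\arel{\lambda}{q}}{\qq}$ between the single value that appears in \eqref{eq:simlim=lim}, writing $s_{\infty}$ as shorthand for $\sfn{\infty}{\airr{\mu+\sroot}^-}$. The guiding observation is that $\arel{\lambda}{q} = \arver{\lambda}{q}/\arint{\lambda}{q}$ is a quotient of the \emph{stringy} relaxed Verma module $\arver{\lambda}{q}$ (see \cref{sec:stringsl2}), whose string function is the constant $s = \qq^{\Delta+1/8}/\eta(\qq)^3$, and that $\sfn{\infty}{\arel{\lambda}{q}} = s_{\infty}$ by \cref{lem:nsimplimstrfn}. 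Note that the naive approach of invoking \cref{lem:inj=stringy} is blocked here: since $\lambda \in \nsimpset{q}$, the zero mode $e_0$ annihilates the ground state $v_{\mu}$ and so does not act injectively on $\arel{\lambda}{q}$.

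First I would obtain the upper bound $\sfn{\nu}{\arel{\lambda}{q}} \le s_{\infty}$. As $\arint{\lambda}{q}$ is a submodule of $\arver{\lambda}{q}$, on which $f_0$ acts injectively by \cref{prop:injective}, the map $f_0 \colon \arint{\lambda}{q}(\nu+\sroot,\cdot) \to \arint{\lambda}{q}(\nu,\cdot)$ is injective for every $\nu$. Hence each coefficient of $\fsfn{\nu}{\arint{\lambda}{q}}{\qq}$ is a non-increasing sequence of non-negative integers, so it converges as $\nu \to +\infty$ and is bounded below by that limit. Subtracting from the constant string function of $\arver{\lambda}{q}$ gives $\sfn{\nu}{\arel{\lambda}{q}} \le \sfn{\infty}{\arel{\lambda}{q}} = s_{\infty}$.

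The lower bound is the crux, precisely because passing to the quotient $\arel{\lambda}{q}$ can only decrease string functions. The idea is to control the opposite limiting direction and then transport the monotonicity argument through conjugation. By \cref{lem:simplesubmod}, $\arel{\lambda}{q}$ contains a simple highest-weight submodule $S$ (namely $\airr{\mu}^+$ or $\airr{-\mu-\sroot}^+$), whose string functions increase monotonically to $s_{\infty}$ as $\nu \to -\infty$; this negative limiting value is exactly the one identified with $s_{\infty}$ in the remark following \cref{thm:simpstrfns}. Thus $\sfn{\nu}{\arel{\lambda}{q}} \ge \sfn{\nu}{S}$ for all $\nu$, and combining this with the upper bound forces the negative limiting string function of $\arel{\lambda}{q}$ to exist and satisfy $\sfn{-\infty}{\arel{\lambda}{q}} = s_{\infty}$.

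Finally I would apply the conjugation functor $\conj$. The module $\conj \arel{\lambda}{q} = \conj \arver{\lambda}{q} \big/ \conj \arint{\lambda}{q}$ is a quotient of $\conj \arver{\lambda}{q}$, which is again stringy with constant string function $s$ since $\arver{\lambda}{q}$ is; moreover $e_0$ acts injectively on $\conj \arint{\lambda}{q}$ precisely because $f_0$ acts injectively on $\arint{\lambda}{q}$. Rerunning the monotonicity argument of the first step with $e_0$ in place of $f_0$ shows that the coefficients of $\fsfn{\nu}{\conj \arel{\lambda}{q}}{\qq}$ are non-increasing in $\nu$, whence $\sfn{\nu}{\conj \arel{\lambda}{q}} \ge \sfn{\infty}{\conj \arel{\lambda}{q}}$. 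Using $\sfn{\nu}{\conj \arel{\lambda}{q}} = \sfn{-\nu}{\arel{\lambda}{q}}$ together with $\sfn{\infty}{\conj \arel{\lambda}{q}} = \sfn{-\infty}{\arel{\lambda}{q}} = s_{\infty}$ from the previous step yields $\sfn{\nu}{\arel{\lambda}{q}} \ge s_{\infty}$ for all $\nu$. With the upper bound this proves that $\arel{\lambda}{q}$ is stringy with string function $s_{\infty}$, which is \eqref{eq:simlim=lim}. The delicate point throughout is the bookkeeping of the two limiting directions: \cref{lem:nsimplimstrfn} and the $f_0$-argument pin down the $\nu \to +\infty$ behaviour, while the submodule $S$ and the conjugate $e_0$-argument are what convert the $\nu \to -\infty$ behaviour into a uniform lower bound.
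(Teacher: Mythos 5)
Your proof is correct and follows essentially the same route as the paper: the injectivity of $f_0$ on $\arint{\lambda}{q}$ (\cref{prop:injective}) giving monotone string functions, \cref{lem:nsimplimstrfn} identifying the positive limit, and the simple submodule of \cref{lem:simplesubmod} together with the conjugation identity pinning the negative limit to the same value. The only difference is cosmetic: your final conjugation step is redundant, because the monotonicity already established in your first step (the string functions of $\arel{\lambda}{q}$ are non-decreasing in $\nu$, being the complement of the non-increasing ones of $\arint{\lambda}{q}$) sandwiches every $\sfn{\nu}{\arel{\lambda}{q}}$ between $\sfn{-\infty}{\arel{\lambda}{q}}$ and $\sfn{\infty}{\arel{\lambda}{q}}$, which is exactly how the paper concludes once these two limits are shown to agree.
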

\begin{proof}
	Since $f_0$ acts injectively on $\arint{\lambda}{q} \subset \arver{\lambda}{q}$, by \cref{prop:injective}, we have
	\begin{equation}
		\fsfn{\nu}{\arint{\lambda}{q}}{\qq} \ge \fsfn{\nu'}{\arint{\lambda}{q}}{\qq} \qquad \implies \qquad
		\fsfn{\nu}{\arel{\lambda}{q}}{\qq} \le \fsfn{\nu'}{\arel{\lambda}{q}}{\qq},
	\end{equation}
	for all $\nu \le \nu'$.  Thus, the string functions of $\arel{\lambda}{q}$ are bounded above and below by $\tfsfn{\infty}{\arel{\lambda}{q}}{\qq}$ and $\tfsfn{-\infty}{\arel{\lambda}{q}}{\qq}$, respectively.  \cref{thm:limstrfnsexist} shows that the positive limits exist and we shall shortly see that the negative ones do too.

	Suppose first that $\sqrt{1+2q} \in \ZZ$ and let $\mu$ be the maximal solution in $\lambda$ of $\bilin{\mu}{\mu+2\wvec} = q$.  Then, $\airr{-\mu-\sroot}^+$ is a submodule of $\arel{\lambda}{q}$, by \cref{lem:simplesubmod}.  Thus, we have
	\begin{equation} \label{eq:secretequalities}
		\fsfn{-\infty}{\airr{-\mu-\sroot}^+}{\qq} \le \fsfn{-\infty}{\arel{\lambda}{q}}{\qq} \le \fsfn{\infty}{\arel{\lambda}{q}}{\qq} = \fsfn{\infty}{\airr{\mu+\sroot}^-}{\qq},
	\end{equation}
	where the last equality is \cref{lem:nsimplimstrfn}.  However, $\tsfn{-\infty}{\airr{-\mu-\sroot}^+} = \tsfn{\infty}{\conj \airr{-\mu-\sroot}^+} = \tsfn{\infty}{\airr{\mu+\sroot}^-}$, so the inequalities in \eqref{eq:secretequalities} are actually equalities.  It follows that $\arel{\lambda}{q}$ is stringy with the required string functions.

	It remains to consider the case when $\sqrt{1+2q} \notin \ZZ$ and so $\mu$ is the unique solution in $\lambda$ of $\bilin{\mu}{\mu+2\wvec} = q$.  Now, \cref{lem:simplesubmod} gives
	\begin{equation}
		\fsfn{-\infty}{\airr{\mu}^+}{\qq} \le \fsfn{-\infty}{\arel{\lambda}{q}}{\qq} \le \fsfn{\infty}{\arel{\lambda}{q}}{\qq} = \fsfn{\infty}{\airr{\mu+\sroot}^-}{\qq}
	\end{equation}
	in place of \eqref{eq:secretequalities}.  However, conjugating and applying \eqref{eq:nice} immediately gives
	\begin{equation} \label{eq:usefulsfnid}
		\fsfn{-\infty}{\airr{\mu}^+}{\qq} = \fsfn{\infty}{\airr{-\mu}^-}{\qq} = \fsfn{\infty}{\airr{\mu+\sroot}^-}{\qq}.
	\end{equation}
	The stringiness is therefore established as before, as is the identification of the string functions.
\end{proof}

For later use, we provide a strengthening of \cref{lem:simplesubmod} in the case where $\sqrt{1+2q} \notin \ZZ$.
\begin{proposition} \label{prop:2cfs}
	Choose $q \in \CC$ so that $\sqrt{1+2q} \notin \ZZ$.  Then, for each $\lambda \in \nsimpset{q}$, we have an short exact sequence
	\begin{equation} \label{eq:es}
		\dses{\airr{\mu}^+}{\arel{\lambda}{q}}{\airr{\mu+\sroot}^-},
	\end{equation}
	where $\mu$ denotes the (unique) solution of $\bilin{\mu}{\mu + 2 \wvec} = q$ in $\lambda$.
\end{proposition}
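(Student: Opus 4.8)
The plan is to assemble the sequence from the submodule supplied by \cref{lem:simplesubmod} and the quotient supplied by \cref{lem:simpquot}, and then to prove that nothing else survives by a character count. First I would invoke \cref{lem:simplesubmod} to embed $\airr{\mu}^+$ as a submodule of $\arel{\lambda}{q}$, and \cref{lem:simpquot} to obtain the surjection $\arel{\lambda}{q} \sra \airr{\mu+\sroot}^-$ onto the unique simple quotient. Since $\airr{\mu}^+$ and $\airr{\mu+\sroot}^-$ are non-isomorphic simple modules, the image of the simple submodule $\airr{\mu}^+$ under this surjection must vanish, so it factors through $\aMod{N} := \arel{\lambda}{q} \big/ \airr{\mu}^+$, giving $\aMod{N} \sra \airr{\mu+\sroot}^-$. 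The sequence cannot split: $\arel{\lambda}{q}$ is cyclic (generated by any ground state $v_\nu$ with $\nu > \mu$) and so has a unique simple quotient by \cref{lem:simpquot}, whereas a split extension would exhibit $\airr{\mu}^+$ as a second simple quotient. It therefore remains only to upgrade $\aMod{N} \sra \airr{\mu+\sroot}^-$ to an isomorphism.

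For this I would pass to characters. The inclusion and projection give $\fsfn{\nu}{\airr{\mu}^+}{\qq} + \fsfn{\nu}{\airr{\mu+\sroot}^-}{\qq} \le \fsfn{\nu}{\arel{\lambda}{q}}{\qq}$ at every weight $\nu \in \lambda$, with equality precisely when $\aMod{N} \sra \airr{\mu+\sroot}^-$ is injective; since $\arel{\lambda}{q}$ is stringy by \cref{thm:nsimpstrfns}, the \rhs{} is the constant string function $\fsfn{\infty}{\airr{\mu+\sroot}^-}{\qq}$, so the whole proposition reduces to the reverse inequality $\fsfn{\nu}{\airr{\mu}^+}{\qq} + \fsfn{\nu}{\airr{\mu+\sroot}^-}{\qq} \ge \fsfn{\nu}{\arel{\lambda}{q}}{\qq}$. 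To access this, recall from \cref{sec:densesl2} that for $\sqrt{1+2q} \notin \ZZ$ the dense module sits in the non-split sequence $\dses{\fver{\mu}^+}{\fden{\lambda}{q}}{\fver{\mu+\sroot}^-}$; inducing, which is exact, yields the sequence of relaxed Verma modules $\dses{\aver{\mu}^+}{\arver{\lambda}{q}}{\aver{\mu+\sroot}^-}$, in which $v_\mu$ (a genuine $\asltwo$-\hwv{}, as $\gamma_\mu = 0$ in \eqref{eq:polyaction}) generates the copy of $\aver{\mu}^+$.

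The heart of the matter is to show that forming $\arel{\lambda}{q} = \arver{\lambda}{q} \big/ \arint{\lambda}{q}$ collapses each term of this Verma sequence onto its simple quotient, i.e.\ that the induced sequence $\dses{\airr{\mu}^+}{\arel{\lambda}{q}}{\airr{\mu+\sroot}^-}$ results. On the submodule side this is straightforward: since $\mu \notin \wlat_{\ge}$, the ground states $\fver{\mu}^+$ of $\aver{\mu}^+$ form a simple $\sltwo$-module, so the maximal proper submodule of $\aver{\mu}^+$ is exactly its largest submodule meeting the ground states trivially; this lies in $\arint{\lambda}{q}$ and in fact equals $\aver{\mu}^+ \cap \arint{\lambda}{q}$, whence the image of $\aver{\mu}^+$ in $\arel{\lambda}{q}$ is its simple quotient $\airr{\mu}^+$. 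The remaining and genuinely harder task is to prove that the image of $\arint{\lambda}{q}$ in the quotient $\aver{\mu+\sroot}^-$ is the whole maximal submodule of $\aver{\mu+\sroot}^-$ (a priori it is only contained in it). I would attack this via the defining maximality of $\arint{\lambda}{q}$ as the largest submodule meeting the ground states trivially: each singular vector generating the maximal submodule of $\aver{\mu+\sroot}^-$ should lift to an honest $\asltwo$-singular vector of $\arver{\lambda}{q}$, whose generated \lwm{} is concentrated in conformal grades strictly above the ground states and so lies inside $\arint{\lambda}{q}$, the images of these submodules then exhausting the maximal submodule. I expect this lifting step to be the main obstacle, since it is governed by the connecting map into $\mathrm{Ext}^1(\aver{\eta}^-, \aver{\mu}^+)$ for $\eta$ the \lw{} of such a vector; the hypothesis $\sqrt{1+2q} \notin \ZZ$, which makes $v_\mu$ the unique \hwv{} among the ground states and gives $\fden{\lambda}{q}$ a clean two-step composition series, is precisely what should render the obstruction manageable in this rank-one setting. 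Once the image is identified, the Verma-level identity $\fsfn{\nu}{\aver{\mu}^+}{\qq} + \fsfn{\nu}{\aver{\mu+\sroot}^-}{\qq} = \fsfn{\nu}{\arver{\lambda}{q}}{\qq}$ descends to the required identity for the simple constituents, completing the proof.
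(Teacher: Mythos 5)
Your proof sets up the same frame as the paper's --- $\airr{\mu}^+ \ira \arel{\lambda}{q}$ from \cref{lem:simplesubmod}, the surjection $\arel{\lambda}{q} \sra \airr{\mu+\sroot}^-$ from \cref{lem:simpquot}, and the observation that everything reduces to showing $\aver{\mu}^+ + \arint{\lambda}{q} = \armax{\lambda}{q}$ inside $\arver{\lambda}{q}$ --- but it stops exactly where the real work begins. The character-count reformulation gains nothing: the ``reverse inequality'' you reduce to is equivalent to the exactness you are trying to prove, so when you reach it all of the work is still ahead of you. The step you yourself label ``the remaining and genuinely harder task'' is then not carried out: you propose to lift the \svs{} generating the maximal submodule of $\aver{\mu+\sroot}^-$ to \svs{} of $\arver{\lambda}{q}$ by controlling a class in $\operatorname{Ext}^1$, you acknowledge this as ``the main obstacle'', and you offer only the expectation that the hypothesis $\sqrt{1+2q} \notin \ZZ$ ``should render the obstruction manageable''. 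That is a plan, not a proof. It also silently assumes that the maximal submodule of the affine Verma module $\aver{\mu+\sroot}^-$ is generated by \svs{}, an additional structural input that would itself need justification.

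The paper closes precisely this gap with a short argument of a quite different flavour, which you should compare with your proposed route. Given a weight vector $v \in \armax{\lambda}{q}$, \cref{lem:I=J} supplies an $m$ with $e_0^m v \in \arint{\lambda}{q}$, because the weight spaces of $\arint{\lambda}{q}$ and $\armax{\lambda}{q}$ agree at sufficiently large $\sltwo$-weight, while the \pbw{} theorem supplies an $n$ with $f_0^n v \in \aver{\mu}^+$. The image of $v$ in $\armax{\lambda}{q} \big/ \brac[\big]{\aver{\mu}^+ + \arint{\lambda}{q}}$ therefore generates a finite-dimensional $\sltwo$-module; since $\sqrt{1+2q} \notin \ZZ$ forces $\mu \notin \wlat$ by \eqref{eq:sols}, no non-zero finite-dimensional $\sltwo$-module has weights in $\mu + \rlat$, so the image vanishes and $\armax{\lambda}{q} = \aver{\mu}^+ + \arint{\lambda}{q}$. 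No lifting of \svs{} and no extension groups are required. If you wish to complete your own write-up, this is the argument you must substitute for the unexecuted lifting step.
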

\begin{proof}
	By the proof of \cref{lem:simplesubmod}, we have $\airr{\mu}^+ \ira \arel{\lambda}{q}$ and $\airr{\mu}^+ \cong \aver{\mu}^+ \big/ \brac[\big]{\aver{\mu}^+ \cap \arint{\lambda}{q}}$.  It follows that
	\begin{equation}
		\frac{\arel{\lambda}{q}}{\airr{\mu}^+} \cong
		\left. \arel{\lambda}{q} \middle/ \frac{\aver{\mu}^+}{\aver{\mu}^+ \cap \arint{\lambda}{q}} \right. \cong
		\left. \frac{\arver{\lambda}{q}}{\arint{\lambda}{q}} \middle/ \frac{\aver{\mu}^+ + \arint{\lambda}{q}}{\arint{\lambda}{q}} \right. \cong
		\frac{\arver{\lambda}{q}}{\aver{\mu}^+ + \arint{\lambda}{q}}.
	\end{equation}
	Since $\airr{\mu+\sroot}^-$ is the unique simple quotient of $\arel{\lambda}{q}$ and $\arver{\lambda}{q}$, by \cref{lem:simpquot}, the \lcnamecref{prop:2cfs} will follow if we can show that $\aver{\mu}^+ + \arint{\lambda}{q} = \armax{\lambda}{q}$ in $\arver{\lambda}{q}$.

	The inclusion $\aver{\mu}^+ + \arint{\lambda}{q} \subseteq \armax{\lambda}{q}$ is clear, so suppose that $v \in \armax{\lambda}{q}$.  Without loss of generality, we may assume that $v$ is a weight vector.  Then, there exists $m$ such that $e_0^m v \in \arint{\lambda}{q}$, because the weight spaces of $\arint{\lambda}{q}$ and $\armax{\lambda}{q}$ coincide for sufficiently large $\sltwo$-weights, by \cref{lem:I=J}.  Moreover, there exists $n$ such that $f_0^n v \in \aver{\mu}^+$, by the \pbw{} theorem.  It follows that the image of $v$ in $\armax{\lambda}{q} \big/ \brac[\big]{\aver{\mu}^+ + \arint{\lambda}{q}}$ generates a finite-dimensional $\sltwo$-module.  As $\sqrt{1+2q} \notin \ZZ$, we have $\mu \notin \wlat$ by \eqref{eq:sols}, so this is impossible unless the image is $0$.  It follows that $\armax{\lambda}{q} = \aver{\mu}^+ + \arint{\lambda}{q}$ as required.
\end{proof}

We conclude with a cautionary example illustrating that our intuition with respect to composition factors of relaxed \hwms{} may need refining when $\sqrt{1+2q} \in \ZZ$.
\begin{example}
	Consider the $\asltwo$-module $\arver{-\wvec}{-1/2}$ at level $\kk = -1$.  Note that $\sqrt{1+2q} = 0$ and $\mu = -\wvec$.  The $\sltwo$-module of ground states therefore has exact sequence
	\begin{equation}
		\dses{\fver{-\wvec}^+}{\fden{-\wvec}{-1/2}}{\fver{\wvec}^-},
	\end{equation}
	in which both Verma modules are simple.  However, the corresponding short sequence
	\begin{equation} \label{eq:notes}
		\dses{\airr{-\wvec}^+}{\arel{-\wvec}{-1/2}}{\airr{\wvec}^-}
	\end{equation}
	of $\asltwo$-modules is \emph{not} exact.  The easiest way to see this is to compute the dimensions of the following weight spaces using the Shapovalov form on $\aver{-\wvec}^+$:
	\begin{equation}
		\airr{-\wvec}^+(3\wvec,\Delta+1), \qquad
		\airr{-\wvec}^+(\wvec,\Delta+1), \qquad
		\airr{-\wvec}^+(-\wvec,\Delta+1), \qquad
		\airr{-\wvec}^+(-3\wvec,\Delta+1).
	\end{equation}
	Here, we recall that $\Delta$ is the conformal weight of the ground states of $\arver{-\wvec}{-1/2}$.  These dimensions are $0$ (obviously), $0$ (because $e_{-1} v_{-\wvec}$ is singular in $\aver{-\wvec}^+$), $1$ and $2$, respectively.  Now, if \eqref{eq:notes} were exact, then we would have
	\begin{equation}
		\begin{aligned}
			\dim \arel{-\wvec}{-1/2}(-3\wvec,\Delta+1) &=
			\dim \airr{-\wvec}^+(-3\wvec,\Delta+1) + \dim \airr{\wvec}^-(-3\wvec,\Delta+1) \\ &=
			\dim \airr{-\wvec}^+(-3\wvec,\Delta+1) + \dim \airr{\wvec}^+(3\wvec,\Delta+1) = 2, \\ \text{and} \qquad
			\dim \arel{-\wvec}{-1/2}(-\wvec,\Delta+1) &=
			\dim \airr{-\wvec}^+(-\wvec,\Delta+1) + \dim \airr{\wvec}^-(-\wvec,\Delta+1) \\ &=
			\dim \airr{-\wvec}^+(-\wvec,\Delta+1) + \dim \airr{\wvec}^+(\wvec,\Delta+1) = 1.
		\end{aligned}
	\end{equation}
	However, this is impossible because $\arel{-\wvec}{-1/2}$ is stringy, by \cref{thm:nsimpstrfns}.

	We can isolate an additional composition factor of $\arel{-\wvec}{-1/2}$, beyond $\airr{-\wvec}^+$ and $\airr{\wvec}^-$, as follows.  First, prove that the following relations hold in $\arel{-\wvec}{-1/2}$ (the \lhss{} are annihilated by all positive modes):
	\begin{equation} \label{eq:rsvs}
		e_{-1} v_{\nu-\sroot} + \bilin{\nu - \wvec}{\wvec} h_{-1} v_{\nu} - \frac{1}{2} \norm{\nu - \wvec}^2 f_{-1} v_{\nu+\sroot} = 0, \qquad \text{for all $\nu \in -\wvec$.}
	\end{equation}
	Second, note that $f_{-1} v_{\wvec}$ is non-zero in $\arel{-\wvec}{-1/2}$ as the module it generates contains $v_{-\wvec}$.  Third, use \eqref{eq:rsvs} to show that $e_0 f_{-1} v_{\wvec}$ is a \hwv{} in $\arel{-\wvec}{-1/2} \big/ \airr{-\wvec}^+$.  We conclude that $\airr{\wvec}$ is also a composition factor of $\arel{-\wvec}{-1/2}$.  Note however that this analysis does not rule out the existence of further composition factors.  We illustrate the structure of $\arel{-\wvec}{-1/2}$ in \cref{fig:Ewith3CFs}.
\end{example}

\begin{figure}
	\begin{tikzpicture}[scale=1.3,>=latex]
		\foreach \x in {-3,-1,1,3}
			\foreach \y in {-2,-1,0}
				\node[wt] at (\x,\y) {};
		\node at (-1,0.3) {$v_{-\wvec}$};
		\node at (1,0.3) {$v_{\wvec}$};
		\draw[dotted] (-4.3,0) -- (-3.8,0);
		\draw[gray] (-3.7,0) -- (-1,0) -- (1.5,-2.5);
		\draw[dotted] (1.6,-2.6) -- (1.9,-2.9);
		\draw[dotted] (4.3,0) -- (3.8,0);
		\draw[gray] (3.7,0) -- (1,0) -- (-1.5,-2.5);
		\draw[dotted] (-1.6,-2.6) -- (-1.9,-2.9);
		\draw[dotted] (-4.3,-2.65) -- (-3.8,-2.4);
		\draw[gray] (-3.7,-2.35) -- (-1,-1) -- (1,-1) -- (3.7,-2.35);
		\draw[dotted] (4.3,-2.65) -- (3.8,-2.4);
		\node[circle,draw,fill=gray!25] at (-4,-1) {$\airr{-\wvec}^+$};
		\node[circle,draw,fill=gray!25] at (4,-1) {$\airr{+\wvec}^-$};
		\node[circle,draw,fill=gray!25] at (0,-2.5) {$\airr{\wvec}$};
		\draw[thick,->] (0.9,0) -- node[above] {$f_0$} (-0.9,0);
		\draw[thick,->] (0.9,-0.05) -- node[above,pos=0.9] {$f_{-1}$} (-0.9,-0.95);
		\draw[thick,->] (0.9,-0.95) -- node[above,pos=0.1] {$e_1$} (-0.9,-0.05);
	\end{tikzpicture}
\caption{A depiction of the structure of $\arel{-\wvec}{-1/2}$, showing three composition factors (though there could be more) and arrows indicating the $\asltwo$-action.  Black dots denote weights and are labelled by ground states when appropriate.  $\sltwo$-weights increase from left to right, while conformal weights increase from top to bottom.} \label{fig:Ewith3CFs}
\end{figure}
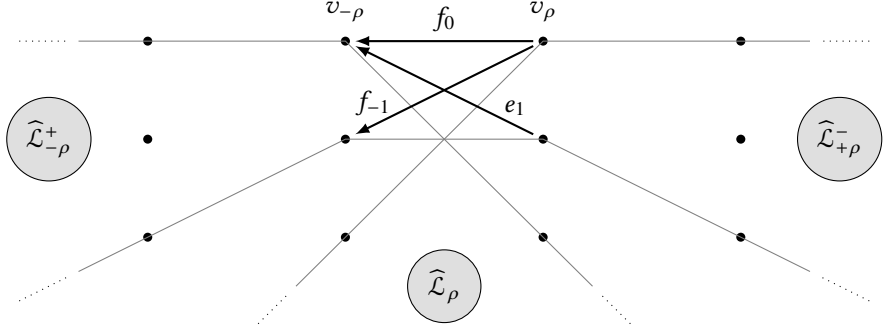

\section{Application to admissible-level $\slvoa{\kk}$-modules} \label{sec:voasl2}

We now apply the results of the previous \lcnamecref{sec:charsl2} to study the $\arel{\lambda}{q}$ that define modules over the simple affine \voa{} $\slvoa{\kk}$, where $\kk$ is an admissible level.  This means that $\kk$ has the form
\begin{equation}
	\kk + \dcox = \frac{u}{v}, \qquad u \in \ZZ_{\ge 2},\ v \in \ZZ_{\ge 1},\ \gcd \set{u,v} = 1,
\end{equation}
where we recall that the dual Coxeter number of $\sltwo$ is $\dcox = 2$.  As the conformal weights of any module over an affine \voa{} are fixed by the Sugawara construction, we shall set those of the ground states of $\arel{\lambda}{q}$ to be
\begin{equation} \label{eq:confwtsl2}
	\Delta = \Delta_q = \frac{q}{2 (\kk + \dcox)}.
\end{equation}

The $\arel{\lambda}{q}$ that define $\slvoa{\kk}$-modules are those with \cite{AdaVer95,RidRel15}
\begin{equation}
	q = q_{r,s} = \frac{1}{2} \brac*{\brac[\big]{r - \frac{u}{v} s}^2 - 1} = \frac{(vr-us)^2 - v^2}{2v^2}, \qquad r=1,\dots,u-1, \quad s=1,\dots,v-1.
\end{equation}
Note the ``Kac table''-type symmetry $q_{u-r,v-s} = q_{r,s}$ indicating coincidences amongst these relaxed \hwms{}.  Moreover, $u$ and $v$ being coprime gives $\sqrt{1+2q_{r,s}} = \abs[\big]{r - \frac{u}{v} s} \notin \ZZ$ which implies that we have $\abs*{\nsimpset{q_{r,s}}} = 2$.  In other words, there are two distinct cosets $\lambda \in \h^* / \rlat$, for each $r$ and $s$ (modulo the Kac symmetry), defining non-simple relaxed \hwms{} of the form $\arel{\lambda}{q_{r,s}}$.  Indeed, the $\mu \in \h^*$ satisfying $\bilin{\mu}{\mu+2\wvec} = q_{r,s}$ are given by
\begin{equation}
	\mu = \mu_{r,s} = \brac*{r-1 - \frac{u}{v} s} \fwt \qquad \text{and} \qquad \mu = \mu_{u-r,v-s} = \brac*{-r-1 + \frac{u}{v} s} \fwt,
\end{equation}
where we recall that $\fwt$ denotes the fundamental weight of $\sltwo$.

The structures of the non-simple relaxed \hwms{} $\arel{\lambda}{q_{r,s}}$, with $\sqrt{1+2q} \notin \ZZ$, are now immediate consequences of \cref{prop:2cfs}.  These structures were previously stated, without proof, in \cite{RidFus10} (see Eqs.~(4.14) and (4.29)), \cite{CreMod12} (see Eq.~(3.14) and the structure diagrams of Sec.~5.1) and \cite{CreMod13} (see Eq.~(4.3)).
\begin{theorem} \label{thm:esvoa}
	Each admissible-level $\slvoa{\kk}$-module $\arel{\mu_{r,s}}{q_{r,s}}$, where $r=1,\dots,u-1$ and $s=1,\dots,v-1$, is a non-split extension of the (conjugate) simple \hwm{} $\airr{\mu_{r,s}+\sroot}^- = \airr{-\mu_{u-r,v-s}}^-$ by the simple \hwm{} $\airr{\mu_{r,s}}^+$.  In other words, the following sequence is exact:
	\begin{equation} \label{eq:esvoa}
		\dses{\airr{\mu_{r,s}}^+}{\arel{\mu_{r,s}}{q_{r,s}}}{\airr{-\mu_{u-r,v-s}}^-}.
	\end{equation}
\end{theorem}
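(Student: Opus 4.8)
The plan is to recognise this \lcnamecref{thm:esvoa} as essentially a specialisation of \cref{prop:2cfs}, so that the only genuinely new content is the non-splitting assertion together with a little weight bookkeeping. First I would verify the hypothesis of \cref{prop:2cfs}. Since $\gcd \set{u,v} = 1$ and $1 \le s \le v-1$, we have $v \nmid us$, so $\sqrt{1+2q_{r,s}} = \abs[\big]{r - \frac{u}{v} s}$ is not an integer; hence $\sqrt{1+2q_{r,s}} \notin \ZZ$ and $\abs*{\nsimpset{q_{r,s}}} = 2$. The two solutions of $\bilin{\mu}{\mu + 2 \wvec} = q_{r,s}$ are $\mu_{r,s}$ and $\mu_{u-r,v-s}$, whose difference $2 \brac[\big]{r - \frac{u}{v} s} \fwt$ does not lie in $\rlat$ (again because $\frac{u}{v}s \notin \ZZ$), so these two solutions occupy the two distinct cosets comprising $\nsimpset{q_{r,s}}$. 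Taking $\lambda = [\mu_{r,s}]$, the unique solution in $\lambda$ is $\mu_{r,s}$ itself, so \cref{prop:2cfs} immediately produces the short exact sequence with quotient term $\airr{\mu_{r,s}+\sroot}^-$.

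Next I would record the elementary identification of the quotient module. A direct computation gives $\mu_{r,s} + \sroot = \brac[\big]{r + 1 - \frac{u}{v} s} \fwt$, while $-\mu_{u-r,v-s} = \brac[\big]{r + 1 - \frac{u}{v} s} \fwt$ as well, so $\mu_{r,s} + \sroot = -\mu_{u-r,v-s}$ and therefore $\airr{\mu_{r,s}+\sroot}^- = \airr{-\mu_{u-r,v-s}}^-$ as claimed. This is exactly the coincidence of labels that reflects the Kac-table symmetry $q_{u-r,v-s} = q_{r,s}$.

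It remains to show that the extension does not split, and here I would appeal to \cref{lem:simpquot}, which identifies $\airr{\mu_{r,s}+\sroot}^-$ as the \emph{unique} simple quotient of $\arel{\mu_{r,s}}{q_{r,s}}$. If the sequence split, then $\airr{\mu_{r,s}}^+$ would be a direct summand and hence a simple quotient of $\arel{\mu_{r,s}}{q_{r,s}}$. But $\airr{\mu_{r,s}}^+$ is a \hwm{}, bounded above in $\sltwo$-weight with top weight $\mu_{r,s}$, whereas $\airr{\mu_{r,s}+\sroot}^-$ is a lowest-weight module bounded below; since $\mu_{r,s} \notin \wlat$ (as $\sqrt{1+2q_{r,s}} \notin \ZZ$), neither is finite-dimensional and the two are plainly non-isomorphic. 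This contradicts the uniqueness of the simple quotient, so the sequence is non-split.

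The only step demanding any genuine thought is the non-splitting, and even that collapses to the uniqueness of the simple quotient already secured in \cref{lem:simpquot}; the existence of the sequence and the matching of weight labels are immediate. I therefore expect no real obstacle: the substantive work was done in \cref{prop:2cfs,lem:simpquot}, and here one merely specialises to $q = q_{r,s}$, $\lambda = [\mu_{r,s}]$ and assembles the bookkeeping.
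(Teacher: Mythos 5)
Your proposal is correct and follows essentially the same route as the paper, which simply records that $\sqrt{1+2q_{r,s}}\notin\ZZ$ (so $\abs*{\nsimpset{q_{r,s}}}=2$ with the two solutions $\mu_{r,s}$ and $\mu_{u-r,v-s}$ in distinct cosets) and then declares the theorem an immediate consequence of \cref{prop:2cfs}. Your explicit non-splitting argument via the uniqueness of the simple quotient from \cref{lem:simpquot} is exactly the intended (but unstated) justification, so there is nothing to fault.
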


\begin{remark}
	Recall that the (non-simple) relaxed \hwm{} $\arel{\mu_{r,s}}{q_{r,s}}$ was chosen so that $f_0$ acts injectively.  Its conjugate therefore has an injective action of $e_0$ and is a non-split extension of $\airr{\mu_{u-r,v-s}}^+$ by $\airr{-\mu_{r,s}}^-$.  In particular, $\conj \arel{\mu_{r,s}}{q_{r,s}}$ is not isomorphic to $\arel{\mu_{u-r,v-s}}{q_{u-r,v-s}}$.
\end{remark}

Finally, we turn to the characters of the $\slvoa{\kk}$-modules $\arel{\lambda}{q_{r,s}}$, $r=1,\dots,u-1$ and $s=1,\dots,v-1$.  \cref{thm:simpstrfns,thm:nsimpstrfns} allow us to compute their string functions in terms of the limiting string functions of the $\airr{\mu_{r,s}+\sroot}^-$.  Indeed, \cref{eq:simlim=lim,eq:usefulsfnid} give
\begin{equation}
	\fsfn{\xi}{\arel{\lambda}{q_{r,s}}}{\qq} = \fsfn{\infty}{\airr{\mu_{r,s}+\sroot}^-}{\qq} = \fsfn{-\infty}{\airr{\mu_{r,s}}^+}{\qq}, \qquad \text{for all $\xi \in \lambda$,}
\end{equation}
independent of $\lambda \in \h^* / \rlat$.  The rightmost limiting string function can now be computed from the Kac-Wakimoto character formula \cite{KacMod88} because $\airr{\mu_{r,s}}^+$ is an admissible level-$\kk$ \hw{} $\asltwo$-module.  We write the character in the form \cite{IohFus01a}
\begin{equation} \label{eq:BGG}
	\fch{\airr{\mu_{r,s}}^+}{\zz;\qq} = \sum_{n \in \ZZ} \brac*{\fch{\aver{\mu_{2nu+r,s}}^+}{\zz;\qq} - \fch{\aver{\mu_{2nu-r,s}}^+}{\zz;\qq}},
\end{equation}
where the Verma module characters are given in \eqref{eq:chvermasl2}.  It is convenient at this point to reinstate the convention that characters and string functions are normalised by the factor $\qq^{-\cc/24}$, where
\begin{equation}
	\cc = \frac{3\kk}{\kk+\dcox} = 3 - \frac{6v^2}{uv}
\end{equation}
is the central charge of $\slvoa{\kk}$.

Now we can use the computation of the limiting string function for Verma modules in \cref{prop:vermasfnsl2}:
\begin{align} \label{eq:relsfn}
	\fsfn{-\infty}{\airr{\mu_{r,s}}^+}{\qq} &= \sum_{n \in \ZZ} \brac*{\fsfn{-\infty}{\aver{\mu_{2nu+r,s}}^+}{\qq} - \fsfn{-\infty}{\aver{\mu_{2nu-r,s}}^+}{\qq}} \\
	&= \frac{1}{\eta(\qq)^3} \sum_{n \in \ZZ} \brac*{\qq^{\Delta_{2nu+r,s} - \cc/24 + 1/8} - \qq^{\Delta_{2nu-r,s} - \cc/24 + 1/8}} \notag \\
	&= \frac{1}{\eta(\qq)^3} \sum_{n \in \ZZ} \brac*{\qq^{\Delta^{\textup{Vir}}_{2nu+r,s} - \cc^{\textup{Vir}}/24 + 1/24} - \qq^{\Delta^{\textup{Vir}}_{2nu-r,s} - \cc^{\textup{Vir}}/24 + 1/24}}. \notag
\end{align}
Here, $\Delta_{r,s} = \Delta_{q_{r,s}}$ and the Virasoro conformal weights and central charge are given by the usual formulae:
\begin{equation}
	\Delta^{\textup{Vir}}_{r,s} = \frac{(vr-us)^2 - (v-u)^2}{4uv}, \qquad \cc^{\textup{Vir}} = 1 - \frac{6(v-u)^2}{uv}.
\end{equation}
Recognising in \eqref{eq:relsfn} the character
\begin{equation} \label{eq:virch}
	\chi^{\textup{Vir}}_{r,s}(\qq) = \frac{\qq^{(1-\cc^{\textup{Vir}})/24}}{\eta(\qq)} \sum_{n \in \ZZ} \brac*{\qq^{\Delta^{\textup{Vir}}_{2nu+r,s}} - \qq^{\Delta^{\textup{Vir}}_{2nu-r,s}}}
\end{equation}
of the simple \hw{} Virasoro module of conformal weight $\Delta^{\textup{Vir}}_{r,s}$ and central charge $\cc^{\textup{Vir}}$, \cref{eq:simlim=lim} gives the string functions, and thence the characters, of all the $\arel{\lambda}{q_{r,s}}$.  This proves a character formula for these modules that was originally conjectured in \cite{CreMod13}.

\begin{theorem} \label{thm:chsl2}
	The characters of the admissible-level $\slvoa{\kk}$-modules $\arel{\lambda}{q_{r,s}}$, with $\lambda \in \h^* / \rlat$, $r=1,\dots,u-1$ and $s=1,\dots,v-1$, are given by
	\begin{equation}
		\fch{\arel{\lambda}{q_{r,s}}}{\zz;\qq} = \frac{\chi^{\textup{Vir}}_{r,s}(\qq)}{\eta(\qq)^2} \sum_{\mu \in \lambda} \zz^{\mu}.
	\end{equation}
\end{theorem}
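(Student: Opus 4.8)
The plan is to assemble this character formula from the string-function results already in hand, together with the explicit evaluation carried out in \eqref{eq:relsfn}; essentially no new work is required. The key observation is that admissibility forces $\sqrt{1+2q_{r,s}} = \abs[\big]{r - \frac{u}{v} s} \notin \ZZ$, so the dichotomy between simple and non-simple $\arel{\lambda}{q_{r,s}}$ is immaterial for the character: \cref{thm:simpstrfns} (for $\lambda \notin \nsimpset{q_{r,s}}$) and \cref{thm:nsimpstrfns} (for $\lambda \in \nsimpset{q_{r,s}}$) both assert stringiness and produce the same common string function via \eqref{eq:simlim=lim}.

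First I would record that, for every $\xi \in \lambda$,
\begin{equation}
	\fsfn{\xi}{\arel{\lambda}{q_{r,s}}}{\qq} = \fsfn{\infty}{\airr{\mu_{r,s}+\sroot}^-}{\qq} = \fsfn{-\infty}{\airr{\mu_{r,s}}^+}{\qq},
\end{equation}
the first equality being \eqref{eq:simlim=lim} and the second being \eqref{eq:usefulsfnid} (applicable precisely because $\sqrt{1+2q_{r,s}} \notin \ZZ$). The rightmost limiting string function was already computed in \eqref{eq:relsfn}, using the Kac-Wakimoto resolution \eqref{eq:BGG} and the Verma-module limits of \cref{prop:vermasfnsl2}, to equal $\chi^{\textup{Vir}}_{r,s}(\qq)/\eta(\qq)^2$ once the $\qq^{-\cc/24}$ normalisation is reinstated and the Virasoro character \eqref{eq:virch} is recognised.

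With the common string function identified, the character follows from its defining expansion \eqref{eq:defchar}. Grouping that sum by $\sltwo$-weight rewrites it as $\sum_{\mu} \zz^{\mu}\, \fsfn{\mu}{\arel{\lambda}{q_{r,s}}}{\qq}$, and since $\arel{\lambda}{q_{r,s}}$ is indecomposable its weight support is the single coset $\lambda \in \h^*/\rlat$; stringiness then lets the (non-zero, $\mu$-independent) string function factor out:
\begin{equation}
	\fch{\arel{\lambda}{q_{r,s}}}{\zz;\qq} = \sum_{\mu \in \lambda} \fsfn{\mu}{\arel{\lambda}{q_{r,s}}}{\qq}\, \zz^{\mu} = \frac{\chi^{\textup{Vir}}_{r,s}(\qq)}{\eta(\qq)^2} \sum_{\mu \in \lambda} \zz^{\mu}.
\end{equation}

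I expect no genuine obstacle here: all of the structural and analytic difficulty was already dispatched in establishing stringiness and in the computation \eqref{eq:relsfn}, so this theorem is really a consolidation step. The only points demanding a little care are confirming that the weight support is exactly the coset $\lambda$ (so that the $\zz$-dependence is precisely $\sum_{\mu \in \lambda} \zz^{\mu}$), which is immediate from indecomposability, and the normalisation bookkeeping, namely tracking the $\qq^{-\cc/24}$ factor through \eqref{eq:relsfn} so that $\chi^{\textup{Vir}}_{r,s}$ emerges cleanly.
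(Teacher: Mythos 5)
Your proposal is correct and follows essentially the same route as the paper: both reduce the claim to the common string function via \cref{thm:simpstrfns,thm:nsimpstrfns} together with \eqref{eq:simlim=lim} and \eqref{eq:usefulsfnid} (valid since admissibility forces $\sqrt{1+2q_{r,s}}\notin\ZZ$), evaluate that limiting string function through the Kac--Wakimoto resolution \eqref{eq:BGG} and \cref{prop:vermasfnsl2} as in \eqref{eq:relsfn}, recognise the Virasoro character \eqref{eq:virch}, and then sum over the weight support $\lambda$.
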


\section{Relaxed \hw{} $\aosp$-modules} \label{sec:relaxedosp12}

We now generalise our study of relaxed \hwms{} over $\asltwo$ to $\aosp$.  We follow a similar strategy as before, but content ourselves with only describing those parts of the arguments that are not just straightforward generalisations of their $\asltwo$ analogues.  The main differences arise because the intended application to modules of admissible-level \svoas{} $\ospvoa{\kk}$ requires us to analyse both the untwisted (\ns{}) and twisted (Ramond) sectors.

\subsection{Simple weight $\osp$-modules} \label{sec:swosp}

The simple basic classical Lie superalgebra $\osp$ has basis $\set{e,x,h,y,f}$, where $e$, $h$ and $f$ are even while $x$ and $y$ are odd.  As the notation suggests, the even subalgebra of $\osp$ is isomorphic to $\sltwo$ and so the commutation rules \eqref{eq:commsl2} continue to hold.  The remaining (anti)commutation relations involving the basis elements may be taken to be
\begin{equation} \label{eqn:commosp}
	\begin{aligned}
		\comm{e}{x}&=0, &\comm{h}{x}&=x, &\comm{f}{x}&=-y, \\
		\comm{e}{y}&=-x, &\comm{h}{y}&=-y, &\comm{f}{y}&=0, \\
		\acomm{x}{x}&=2e, &\acomm{x}{y}&=h, &\acomm{y}{y}&=-2f.
	\end{aligned}
\end{equation}
The non-zero entries of the (rescaled) Killing form, in this basis, are
\begin{equation}
	\killing{h}{h} = 2, \quad \killing{e}{f} = \killing{f}{e} = 1, \quad \killing{x}{y} = -\killing{y}{x} = 1.
\end{equation}
The Cartan subalgebra is chosen to be $\h = \CC h$ and the quadratic Casimir to be
\begin{equation}
	\cas' = \frac{1}{2} h^2 + ef + fe - \frac{1}{2} xy + \frac{1}{2} yx.
\end{equation}
In $\envalg{\osp}$, there is also the super-Casimir \cite{ArnCas97} given by
\begin{equation}
	\scas = xy - yx + \frac{1}{2}.
\end{equation}
It is not central, but rather commutes with $e$, $h$ and $f$, while it anticommutes with $x$ and $y$.  Note that $\scas^2 = 2\cas' + \frac{1}{4}$.

Let $\fwt \in \h^*$ and $\sroot = 2 \fwt$ denote the fundamental weight and highest root of $\osp$.  The (odd) simple root is then $\frac{1}{2} \sroot = \fwt$ and the Weyl vector is $\wvec = \frac{1}{2} \fwt$.  Let $\wlat = \rlat = \ZZ \fwt$ denote the weight and root lattices, while $\rlat^0 = \ZZ \sroot$ denotes the even root lattice.  $\wlat_{\ge} = \ZZ_{\ge 0} \fwt$ again denotes the dominant integral weights.  We induce the Killing form to a bilinear form $\bilin{\cdot}{\cdot}$ on $\h^*$, noting that the rescaling again normalises the latter so that $\norm{\sroot}^2 = 2$.

The classification of simple weight $\osp$-modules follows a similar pattern to that of $\sltwo$-modules (\cref{prop:sl2simples}).  We recall our assumption (\cref{sec:relaxed}) that weight $\osp$-modules are $\ZZ_2$-graded by parity.  This means that an $\osp$-module decomposes into the direct sum of an even and an odd subspace, which are both preserved by the even elements $e$, $h$ and $f$ but are swapped by the odd elements $x$ and $y$.  There is an obvious \emph{parity-reversal} functor $\parrev$ on any category of $\ZZ_2$-graded $\osp$-modules given by exchanging the even and odd subspaces.  We note that $\scas$-eigenvalues are constant on the even and odd subspaces of a simple $\osp$-module, taking values $\sigma$ and $-\sigma$, respectively, for some $\sigma \in \CC$.

As we did for $\sltwo$, it is convenient to introduce a family of subsets, this time parametrised by $\sigma \in \CC$:
\begin{equation} \label{eq:defLambdao}
	\nsimpseto{\sigma} = \set*{[\lambda] \in \h^* / \rlat^0 \st \norm{\mu}^2 = \tfrac{1}{2} \brac*{\sigma - \tfrac{1}{2}}^2\ \text{for some}\ \mu \in [\lambda]}.
\end{equation}
This facilitates the following classification of simple weight $\osp$-modules.
\begin{proposition}[\protect{\cite[Thm.~2]{RidAdm17}}] \label{prop:ospsimples}
	Every simple ($\ZZ_2$-graded) weight $\osp$-module (with finite-dimensional weight spaces) is either isomorphic to a member of one of the following families or its parity-reversal is.
	\begin{enumerate}
		\item The finite-dimensional modules $\ffino{\mu}$ with even highest weight $\mu \in \wlat_{\ge}$ and lowest weight $-\mu$.
		\item The \hw{} Verma modules $\fvero{\mu}^+$ with even highest weight $\mu \notin \wlat_{\ge}$.
		\item The \lw{} Verma modules $\fvero{\mu}^-$ with even lowest weight $\mu \notin -\wlat_{\ge}$.
		\item The dense modules $\fdeno{\lambda}{\sigma}$ whose even weight vectors have weights in $\lambda \in \h^* / \rlat^0$ and $\scas$-eigenvalue $\sigma \in \CC$, where $\lambda \notin \nsimpseto{\sigma}$.
	\end{enumerate}
	All of these modules have one-dimensional weight spaces.
\end{proposition}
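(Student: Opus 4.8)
The plan is to mirror the rank-$1$ analysis behind \cref{prop:sl2simples}, exploiting the fact that the odd generators square to the even ones: \eqref{eqn:commosp} gives $x^2 = e$ and $y^2 = -f$. Hence on any weight module $x$ acts injectively if and only if $e$ does, and $y$ if and only if $f$ does; moreover a weight vector killed by $x$ is killed by $e = x^2$ and is thus a \hwv{} for $\osp$, while one killed by $y$ is a \lwv{}. Since a nonzero simple weight module is generated by any one of its weight vectors, its weight support is a single coset of the root lattice $\wlat = \ZZ\fwt$, splitting into the two $\rlat^0$-cosets that carry its even and odd weight spaces. The classification therefore reduces to a dichotomy according to the injectivity of $x$ and $y$.

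First I would treat the non-injective cases. If $x$ is not injective, its graded kernel furnishes a \hwv{} $v_\mu$, and by simplicity the module is the \hw{} module it generates; descent proceeds through $v_\mu, y v_\mu, y^2 v_\mu = -f v_\mu, \dots$, exactly as for $\sltwo$. If $y$ is also non-injective there is in addition a \lwv{}, and a \hw{} module carrying a \lwv{} is finite-dimensional, giving $\ffino{\mu}$ with $\mu \in \wlat_{\ge}$ (up to parity). If instead $y$ is injective there is no \lwv{}, and the reducibility threshold $\mu \in \wlat_{\ge}$ is established as for $\sltwo$, leaving the simple \hw{} Verma $\fvero{\mu}^+$ with $\mu \notin \wlat_{\ge}$. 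The case in which only $y$ fails to be injective is symmetric and yields the \lw{} Verma $\fvero{\mu}^-$.

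It remains to handle the dense case, where $x, y, e$ and $f$ all act injectively. Here I would adapt the induction procedure of \cref{sec:densesl2}. A \pbw{} computation identifies the centraliser of $\h$ in $\envalg{\osp}$ with the commutative algebra $\CC[h,\scas]$ (note $\comm{h}{\scas} = 0$, while $\scas^2 = 2\cas' + \tfrac14$ recovers the quadratic Casimir), whose simple modules are one-dimensional and given by $(h,\scas) \mapsto (\lambda(h),\sigma)$. Inducing from such a module and using $x^2 = e$, $y^2 = -f$ to absorb the even generators, one obtains the spanning set $\set{x^n v, y^n v \st n \in \ZZ_{\ge 0}}$, whose weights run over the coset $\lambda + \ZZ\fwt$ attaining each weight once; the weight spaces are thus one-dimensional and the module is dense. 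Such an induced module is simple exactly when no $x^n v$ is a \lwv{} and no $y^n v$ is a \hwv{}. Evaluating $\scas$ on a putative extremal vector of weight $\mu$ gives $\sigma - \tfrac12 = \pm\mu(h)$ (via $\acomm{x}{y} = h$), equivalently $\norm{\mu}^2 = \tfrac12 \brac*{\sigma - \tfrac12}^2$; the obstruction to simplicity is therefore precisely $\lambda \in \nsimpseto{\sigma}$, and its absence singles out the simple dense modules $\fdeno{\lambda}{\sigma}$ with $\lambda \notin \nsimpseto{\sigma}$.

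The step I expect to be the main obstacle is the dense case, and within it the verification that the centraliser of $\h$ is no larger than $\CC[h,\scas]$: this is what forces one-dimensional weight spaces and underwrites the clean parametrisation by $(\lambda,\sigma)$. One could instead invoke Mathieu's theory of coherent families to obtain the degree-one statement, but the centraliser computation keeps the argument self-contained. The remaining bookkeeping — the interleaving of parities, the $\parrev$-ambiguity recorded in the statement, and matching the extremal Casimir value with the defining relation of $\nsimpseto{\sigma}$ — is routine once this structural fact is secured.
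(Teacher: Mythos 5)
The paper does not actually prove this proposition: it is imported wholesale by citation from \cite[Thm.~2]{RidAdm17}, just as its $\sltwo$ analogue \cref{prop:sl2simples} is cited from the literature, so there is no in-paper argument to compare yours against. Your sketch is the standard classification argument and is correct in outline. The reductions you lean on are all sound: \eqref{eqn:commosp} does give $x^2=e$ and $y^2=-f$, tying the injectivity of the odd generators to that of the even ones and making a vector killed by $x$ (resp.\ $y$) a \hwv{} (resp.\ \lwv{}); the four-fold case split on injectivity of $x$ and $y$ is the right dichotomy; the weight-zero subalgebra of $\envalg{\osp}$ is indeed $\CC[h,\scas]$ (via $yx=\tfrac{1}{2}\brac{h-\scas+\tfrac{1}{2}}$ and $\cas'=\tfrac{1}{2}\scas^2-\tfrac{1}{8}$); and your extremal-vector computation, $\scas w=\brac{\mu(h)+\tfrac{1}{2}}w$ for an even \hwv{} $w$ of weight $\mu$, reproduces exactly the condition $\norm{\mu}^2=\tfrac{1}{2}\brac{\sigma-\tfrac{1}{2}}^2$ defining $\nsimpseto{\sigma}$.

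Two steps deserve to be written out rather than waved at. First, ``the reducibility threshold is established as for $\sltwo$'' hides the only genuine computation in the \hw{} branch: one needs $x\,y^{2j+1}v_\mu=\brac{\mu(h)-j}\,y^{2j}v_\mu$ and $x\,y^{2j}v_\mu=-j\,y^{2j-1}v_\mu$, so that a \sv{} exists if and only if $\mu(h)\in\ZZ_{\ge 0}$; this is what makes the simple \hwm{} equal to the Verma module precisely when $\mu\notin\wlat_{\ge}$. Second, in the dense case the one-dimensionality of weight spaces does not follow from the induction (which only shows that the \emph{induced} module has this property) but from the prior lemma that every non-zero weight space of a simple weight module is a simple module over the weight-zero subalgebra $\CC[h,\scas]$, hence one-dimensional by commutativity; only then is the simple module a quotient of your induced module, and the injectivity of all of $x$, $y$, $e$ and $f$ forces it to be the whole thing. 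With those two lemmas made explicit, your argument is complete and self-contained, which is arguably preferable to the paper's bare citation.
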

\noindent Note that the weight support of the dense module $\fdeno{\lambda}{\sigma}$ is actually $\lambda \cup (\lambda + \fwt)$, the second $\rlat^0$-coset corresponding to odd vectors.  We parametrise these modules by their even weight supports because of the obvious isomorphisms $\fdeno{\lambda+\fwt}{\sigma} \cong \parrev \fdeno{\lambda}{-\sigma}$.  As for $\sltwo$ (see \cref{sec:swsl2}), the Weyl reflection of $\osp$ defines a functor on $\osp$-modules that exchanges $\fvero{\mu}^+$ with $\fvero{-\mu}^-$.

\subsection{Non-simple dense $\osp$-modules} \label{sec:denseosp}

The action of $e$, $x$, $y$ and $f$ on the simple dense modules $\fdeno{\lambda}{\sigma}$, $\lambda \notin \nsimpseto{\sigma}$, is injective.  We shall therefore choose basis vectors $v_{\mu}$, $\mu \in \lambda$, so that
\begin{subequations} \label{eq:ospaction}
	\begin{equation}
		e v_{\mu} = \beta_{\mu}^2 v_{\mu+\sroot}, \quad
		x v_{\mu} = \beta_{\mu} v_{\mu+\fwt}, \quad
		h v_{\mu} = \bilin{\mu}{\sroot} v_{\mu}, \quad
		y v_{\mu} = v_{\mu-\fwt}, \quad
		f v_{\mu} = -v_{\mu-\sroot},
	\end{equation}
	where
	\begin{equation}
		\beta_{\mu} = \frac{1}{2} \sqbrac*{\bilin{\mu}{\sroot} - (-1)^{\parity{v_{\mu}}} \sigma + \frac{1}{2}}
	\end{equation}
\end{subequations}
and $\parity{v_{\mu}} \in \set{0,1}$ denotes the parity of $v_{\mu}$.  This action is observed to be polynomial in $\mu \in \h^*$, up to the parity-dependent sign.  This is not a major obstacle to the analysis to follow because we can just restrict to the even and odd subspaces when we wish to exploit this polynomial dependence.  Our first task is to define non-simple indecomposables $\fdeno{\lambda}{\sigma}$, with $\lambda \in \nsimpseto{\sigma}$, to complete this family.  We shall construct them so that $y$ and $f$ continue to act injectively, hence $v_{\mu}$ may be chosen such that \eqref{eq:ospaction} continues to hold.

We therefore fix $\sigma \in \CC$ and define indecomposable dense $\osp$-modules $\fdeno{\lambda}{\sigma}$, with $\lambda \in \nsimpseto{\sigma}$, by inducing from the centraliser $\CC[h,\scas]$ of $\h$ in $\envalg{\osp}$ as follows.  Let $v$ be an \emph{even} eigenvector of $h$ and $\scas$, so that $hv = \lambda(h) v$ and $\scas v = \sigma v$ for some $\lambda \in \h^*$ and $\sigma \in \CC$ satisfying $[\lambda] \in \nsimpseto{\sigma}$.  The structure of the $\osp$-module induced from the $\CC[h,\scas]$-module $\CC v$ then depends on the relative ordering (by real parts of Dynkin labels) between $\lambda$ and the solutions
\begin{equation} \label{eq:ospsols}
	\mu = \pm \brac*{\sigma - \tfrac{1}{2}} \fwt
\end{equation}
of $\norm{\mu}^2 = \frac{1}{2} \brac*{\sigma - \frac{1}{2}}^2$.  We take $\fdeno{\lambda}{\sigma} = \fdeno{[\lambda]}{\sigma}$ to be the induced module with $\lambda$ larger than all solutions, so that $y$ and $f$ act injectively.  It follows that $\fdeno{\lambda}{\sigma}$ may have \hwvs{}, but no \lwvs{}.  Indeed, $v_{\mu}$ will be an even \hwv{} if $\mu = \brac*{\sigma - \tfrac{1}{2}} \fwt \in \lambda$ and $yv_{\mu}$ will be an odd \hwv{} if $\mu = -\brac*{\sigma - \tfrac{1}{2}} \fwt \in \lambda$.

\begin{example}
	Consider the case $\sigma = \frac{1}{2}$, so that \eqref{eq:ospsols} has the unique solution $\mu = 0$.  Then, $[\lambda] = [0] = \rlat^0$ and $\fdeno{0}{1/2}$ has two \hwvs{}: $v_0$ and $yv_0 = v_{-\fwt}$.  Its (unique) composition series is therefore
\begin{equation}
	0 \subset \parrev \fvero{-\fwt}^+ \subset \fvero{0}^+ \subset \fdeno{[0]}{1/2},
\end{equation}
with composition factors $\parrev \fvero{-\fwt}^+$, $\ffino{0}$ and $\parrev \fvero{\fwt}^-$.
\end{example}

This case generalises: both solutions \eqref{eq:ospsols} belong to the same $\rlat^0$-coset if and only if $\sigma \in \ZZ + \frac{1}{2}$.  In this case, take $\mu \in \wlat_{\ge}$ to be the maximal solution in $\lambda = [\mu] \in \h^* / \rlat^0$.  The composition series of $\fden{\lambda}{\sigma}$ thus depends on the sign of $\sigma$.  Specifically, if $\sigma \in \ZZ + \frac{1}{2}$ and $\sigma > 0$, then the series is
\begin{equation}
	0 \subset \parrev \fvero{-\mu-\fwt}^+ \subset \fvero{\mu}^+ \subset \fdeno{\lambda}{\sigma}
\end{equation}
and the composition factors are $\parrev \fvero{-\mu-\fwt}^+$, $\ffino{\mu}$ and $\parrev \fvero{\mu+\fwt}^-$.  However, for $\sigma \in \ZZ + \frac{1}{2}$ and $\sigma < 0$, the series is instead
\begin{equation}
	0 \subset \fvero{-\mu}^+ \subset \parrev \fvero{\mu-\fwt}^+ \subset \fdeno{\lambda}{\sigma},
\end{equation}
with composition factors $\fvero{-\mu}^+$, $\parrev \ffino{\mu+\fwt}$ and $\fvero{\mu}^-$.

The remaining case corresponds to the two solutions \eqref{eq:ospsols} belonging to different $\rlat^0$-cosets, whence $\sigma \notin \ZZ + \frac{1}{2}$.  This leads to two inequivalent indecomposable dense $\osp$-modules $\fdeno{\lambda_{\pm}}{\sigma}$, where $\lambda_{\pm} = [\mu_{\pm}] = [\pm(\sigma - \tfrac{1}{2}) \fwt]$.  The composition series are
\begin{equation}
	0 \subset \fvero{\mu_+}^+ \subset \fdeno{\lambda_+}{\sigma} \qquad \text{and} \qquad
	0 \subset \parrev \fvero{\mu_--\fwt}^+ \subset \fdeno{\lambda_-}{\sigma},
\end{equation}
with respective composition factors $\fvero{\mu_+}^+$, $\parrev \fvero{\mu_++\fwt}^-$ and $\parrev \fvero{\mu_--\fwt}^+$, $\fvero{\mu_-}^-$.

\subsection{Relaxed \hw{} $\aosp$-modules} \label{sec:rhwosp}

As in \cref{sec:rhwsl2}, we may induce an indecomposable weight $\osp$-module to a relaxed Verma $\aosp$-module in category $\categ{R}$, once we choose eigenvalues $\kk$ and $\Delta$ for $K$ and $L_0$.  These induced modules are $\aosp$-modules, but are often said to belong to the \emph{\ns{}} sector for historical reasons.  Such \ns{} modules will be denoted by adding hats and $\NS$ symbols to the $\osp$-modules that they were induced from.  Thus, we have Verma modules $\averns{\mu}^{\pm}$, parabolic Vermas $\afinns{\mu}$ and relaxed Vermas $\arverns{\lambda}{\sigma}$.  Quotienting each by the maximal submodule whose intersection with the space of ground states is zero results in more \ns{} modules: $\airrns{\mu}^{\pm}$, $\airrns{\mu}$ and $\arelns{\lambda}{\sigma}$ (respectively).  All are simple except for the $\arelns{\lambda}{\sigma}$ with $\lambda \in \nsimpseto{\sigma}$.

In many applications, those of \cref{sec:voaosp} for instance, one also needs to consider a twisted version of $\aosp$ in which the indices of $x_n$ and $y_n$ are required to belong to $\ZZ + \frac{1}{2}$ instead of $\ZZ$.  We shall denote this twisted version by $\raosp$ and refer to its modules as the \emph{Ramond} sector of $\aosp$.  Because the zero modes of the Ramond sector omit $x_0$ and $y_0$, we construct relaxed Verma $\aosp$-modules in the Ramond sector by inducing indecomposable weight $\sltwo$-modules.  The notation for the result adds a hat to the $\sltwo$-module being induced, just as in the \ns{} sector, but adds an \R{} symbol instead.  Thus, the Ramond sector has Verma modules $\averr{\mu}^{\pm}$, parabolic Vermas $\afinr{\mu}$ and relaxed Vermas $\arverr{\lambda}{q}$.  As above, quotienting each by its maximal submodule whose intersection with the space of ground states is zero gives new Ramond modules: $\airrr{\mu}^{\pm}$, $\airrr{\mu}$ and $\arelr{\lambda}{q}$ (respectively).  Again, these modules are all simple except for the $\arelr{\lambda}{q}$ with $\lambda \in \nsimpset{q}$.

Finally, the Weyl-reflection functor of $\osp$ lifts to a \emph{conjugation} functor on $\aosp$-modules that we shall (again) denote by $\conj$.  We have, for example, $\conj \airrns{\mu}^+ \cong \airrns{-\mu}^-$ and $\conj \airrr{\mu}^+ \cong \airrr{-\mu}^-$.  We emphasise that an \R{} label indicates that the module was induced from an $\sltwo$-module (which may otherwise share notation with a similar $\osp$-module) so that its parametrisation must always be understood in the context of $\sltwo$ data.

\section{Relaxed $\aosp$-modules and their (super)characters} \label{sec:charosp}

We now turn to the string functions of the \ns{} and Ramond relaxed \hw{} $\aosp$-modules $\arelns{\lambda}{\sigma}$ and $\arelr{\lambda}{q}$.  Their computation will only be outlined here as many of the details and proofs follow in an almost identical fashion to those detailed for $\asltwo$ in \cref{sec:charsl2}.

\subsection{String functions} \label{sec:stringosp}

The character of a \ns{} or Ramond level-$\kk$ weight module $\aMod{M}$ over $\aosp$ is still given by \eqref{eq:defchar} and string functions are likewise defined by \eqref{eq:defsfn}.  We shall also consider the supercharacter of $\aMod{M}$ which is given, at least for indecomposable weight modules, by inserting $(-1)^{\parity{\mu}}$ into the sum in \eqref{eq:defchar}, where $\parity{\mu} \in \set{0,1}$ denotes the parity of the weight vectors in $\aMod{M}$ whose $\osp$-weight is $\mu \in \h^*$.

\begin{example}
	The character and non-zero string functions of the level-$\kk$ \ns{} relaxed Verma module $\arverns{\lambda}{\sigma}$, for $\lambda \in \h^* / \rlat^0$ and $\sigma \in \CC$, are
	\begin{equation}
		\begin{aligned}
			\fch{\arverns{\lambda}{\sigma}}{\zz;\qq} &= \qq^{\Delta + 1/24} \frac{\fjth{2}{1;\qq}}{2 \eta(\qq)^4} \sqbrac*{\sum_{\mu \in \lambda} \zz^{\mu} + \sum_{\mu \in \lambda+\fwt} \zz^{\mu}} \\
			\Ra \quad \fsfn{\mu}{\arverns{\lambda}{\sigma}}{\qq} &= \qq^{\Delta + 1/24} \frac{\fjth{2}{1;\qq}}{2 \eta(\qq)^4}, \quad \text{if $\mu \in \lambda \cup (\lambda + \fwt$).}
		\end{aligned}
	\end{equation}
	It follows that $\arverns{\lambda}{\sigma}$ is stringy.  The Ramond relaxed Verma character and non-zero string functions are, however, given by
	\begin{equation}
		\begin{aligned}
			\fch{\arverr{\lambda}{q}}{\zz;\qq} &= \frac{\qq^{\Delta + 1/6}}{2 \eta(\qq)^4} \sqbrac*{\sum_{\mu \in \lambda} \brac[\big]{\fjth{3}{1;\qq} + \fjth{4}{1;\qq}} \zz^{\mu} + \sum_{\mu \in \lambda+\fwt} \brac[\big]{\fjth{3}{1;\qq} - \fjth{4}{1;\qq}} \zz^{\mu}} \\
			\Ra \quad \fsfn{\mu}{\arverr{\lambda}{q}}{\qq} &=
			\begin{cases*}
				\dfrac{\qq^{\Delta + 1/6}}{2 \eta(\qq)^4} \brac[\big]{\fjth{3}{1;\qq} + \fjth{4}{1;\qq}}, & \text{if $\mu \in \lambda$,} \\
				\dfrac{\qq^{\Delta + 1/6}}{2 \eta(\qq)^4} \brac[\big]{\fjth{3}{1;\qq} - \fjth{4}{1;\qq}}, & \text{if $\mu \in \lambda + \fwt$.}
			\end{cases*}
		\end{aligned}
	\end{equation}
	$\arverr{\lambda}{q}$ is therefore not stringy.  Here, $\jth{j}$ denotes the Jacobi theta functions (with the conventions of \cite[App.~B]{RidSL208}).

	The supercharacters of these relaxed Verma modules may be obtained from the above character formulae by replacing, in each, the sum of the two sums by their difference.
\end{example}

The previous Ramond example inspires us to make an alternative definition.
\begin{definition}
	A level-$\kk$ Ramond weight module $\aMod{M}$ is said to be \emph{\R-stringy} if its non-zero string functions $\tsfn{\mu}{\aMod{M}}$ depend only on whether $\mu$ belongs to its even or odd weight support.
\end{definition}
\noindent Obviously, the $\arverr{\lambda}{q}$ are \R-stringy (as are the $\arverns{\lambda}{\sigma}$).  Given an indecomposable level-$\kk$ Ramond weight module $\aMod{M}$, so that the even and odd weight supports are the $\rlat^0$-cosets $[\mu]$ and $[\mu+\fwt]$, respectively, for some $\mu \in \h^*$, we thus have two distinct notions of limiting string function:
\begin{equation}
	\fevsfn{\pm\infty}{\aMod{M}}{\qq} = \lim_{m \to \pm\infty} \fsfn{\mu + 2m \fwt}{\aMod{M}}{\qq}, \qquad
	\fodsfn{\pm\infty}{\aMod{M}}{\qq} = \lim_{m \to \pm\infty} \fsfn{\mu + (2m+1) \fwt}{\aMod{M}}{\qq}.
\end{equation}
We call these the limiting even and odd string functions, respectively.

\subsection{Coherent families and Shapovalov forms} \label{sec:cohosp}

Recall that in \cref{sec:relaxedosp12}, we defined $\arelns{\lambda}{\sigma}$ ($\arelr{\lambda}{q}$) to be the quotient of the relaxed Verma module $\arverns{\lambda}{\sigma}$ ($\arverr{\lambda}{q}$) by the maximal submodule $\arintns{\lambda}{\sigma}$ ($\arintr{\lambda}{q}$) whose intersection with the space of ground states is zero.  There are thus four types of relaxed coherent families to consider:
\begin{equation}
	\avcohns{\sigma} = \bigoplus_{\lambda \in \h^* / \rlat^0} \arverns{\lambda}{\sigma}, \quad
	\acohns{\sigma} = \bigoplus_{\lambda \in \h^* / \rlat^0} \arelns{\lambda}{\sigma}, \quad
	\avcohr{q} = \bigoplus_{\lambda \in \h^* / \rlat} \arverr{\lambda}{q}, \quad
	\acohr{q} = \bigoplus_{\lambda \in \h^* / \rlat} \arelr{\lambda}{q}.
\end{equation}
We let $\armaxns{\lambda}{\sigma}$ and $\armaxr{\lambda}{q}$ denote the maximal proper submodules of $\arverns{\lambda}{\sigma}$ and $\arverr{\lambda}{q}$ , respectively.

The first task is to construct analogues of Shapovalov forms on $\avcohns{\sigma}$ and $\avcohr{q}$.  For this, we need an adjoint on $\envalg{\aosp}$ and $\envalg{\raosp}$.  A convenient choice is
\begin{equation} \label{eq:adjointosp}
	e_n^{\dag} = f_{-n}, \quad x_n^{\dag} = \ii y_{-n}, \quad h_n^{\dag} = h_{-n}, \quad y_n^{\dag} = -\ii x_{-n}, \quad f_n^{\dag} = e_{-n}, \quad K^{\dag} = K, \quad L_0^{\dag} = L_0.
\end{equation}
We emphasise that ${}^{\dag}$ is taken to be a linear antiautomorphism, not an antilinear one, because the Shapovalov forms are intended to be bilinear, not sesquilinear.  With this understood, it is easy to check that ${}^{\dag}$ is involutive.

The ground states of $\arverns{\lambda}{\sigma}$ form a coherent family over $\osp$, hence we may define the Shapovalov form $\inner{\cdot}{\cdot}_{\nu}$ by choosing, for each $\lambda \in \h^* / \rlat^0$, a ground state $v_{\nu}$, as in \eqref{eq:ospaction}, that generates $\arverns{\lambda}{\sigma}$:
\begin{equation}
	\inner{v_{\nu}}{v_{\nu}}_{\nu} = 1 \quad \text{and} \quad
	\inner{U v_{\nu}}{V v_{\nu}}_{\nu} = \inner{v_{\nu}}{U^{\dag} V v_{\nu}}_{\nu} = \left. \beta(U^{\dag} V) \right\rvert_{h \mapsto \nu(h), \Sigma \mapsto \sigma}, \qquad \text{for all $U,V \in \envalgk{\aosp}$.}
\end{equation}
Here, $\beta \colon \envalgk{\aosp} \to \CC[h,\scas]$ is the projection whose kernel is spanned by the \pbw{} monomials, ordered so that indices increase, that have a non-zero index or have non-zero $\osp$-weight.  In the Ramond case, the coherent family of ground states is instead over $\sltwo$ so the definition of the Shapovalov forms is as in \eqref{eq:defshap2} except that the \uea{} is that of $\raosp$.  A Shapovalov form on each affine coherent family $\avcohns{\sigma}$ or $\avcohr{q}$ is then obtained as a direct sum of forms $\inner{\cdot}{\cdot}_{\nu}$ over $[\nu] \in \h^* / \rlat^0$ or $[\nu] \in \h^* / \rlat$, respectively.

Consider now the weight space $\avcohns{\sigma}(\nu, \Delta^{\NS}_{\sigma}+n)$ of $\osp$-weight $\nu$ and conformal weight $\Delta^{\NS}_{\sigma}+n$, where $n \in \ZZ_{\ge 0}$.  A basis for this space consists of the $U v_{\nu+n\sroot}$ in which $U$ is a \pbw{} monomial of $\envalgk{\aosp^{\le}}$, ordered so that indices increase, with $\osp$-weight $-n\sroot$ and conformal grade $n$, such that the exponents of $e_0$, $x_0$ and $h_0$ are all $0$. Then, the analogue of \cref{lem:I=J} shows that the dimension of $\acohns{\sigma}(\nu, \Delta^{\NS}_{\sigma}+n)$ is equal, for sufficiently large $\nu$, to the rank of the matrix whose entries are the values $\inner{U v_{\nu+n\sroot}}{V v_{\nu+n\sroot}}_{\nu}$ of the Shapovalov form applied to the basis elements.  A similar construction identifies $\dim \acohr{q}(\nu, \Delta^{\R}_{q}+m)$, $m \in \frac{1}{2} \ZZ_{\ge 0}$, with the rank of the corresponding Shapovalov matrix, again for sufficiently large $\nu$.  The proof of \cref{lem:rankconst} now readily generalises and we arrive at the following result.
\begin{proposition} \label{thm:limstrfnsexistosp}
	For given $\sigma, q \in \CC$, the positive limiting string functions $\tsfn{\infty}{\arelns{\lambda}{\sigma}}$ and $\trsfn{\infty}{\pm}{\arelr{\lambda}{q}}$ all exist and are $\lambda$-independent.
\end{proposition}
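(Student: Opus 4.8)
The plan is to transcribe the $\asltwo$ argument of \cref{lem:rankconst} and \cref{thm:limstrfnsexist}, the only genuinely new work being to account for the $\ZZ_2$-grading by parity. As in the $\asltwo$ case, the backbone is the observation, already noted before the statement, that for fixed $n$ (resp.\ $m$) the dimension of the weight space $\acohns{\sigma}(\nu, \Delta^{\NS}_{\sigma}+n)$ (resp.\ $\acohr{q}(\nu, \Delta^{\R}_{q}+m)$) equals the rank of the corresponding Shapovalov matrix for all sufficiently large $\nu$, by the analogue of \cref{lem:I=J}. Since this matrix is built on the coherent family $\avcohns{\sigma}$ (resp.\ $\avcohr{q}$) and makes no reference to the summand $\lambda$, once its rank is shown to stabilise as $\nu \to \infty$ the resulting limiting dimension is automatically independent of $\lambda$. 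Existence and $\lambda$-independence of the limiting string functions then follow coefficient by coefficient, exactly as \cref{thm:limstrfnsexist} followed from \cref{lem:rankconst}.

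First I would establish the rank-stabilisation statement. The one subtlety is that, by \eqref{eq:ospaction}, the $\osp$-action on a coherent family is polynomial in $\nu(h)$ only up to the parity-dependent sign $(-1)^{\parity{v_{\mu}}}$ carried by $\scas$. I would circumvent this by working within a fixed parity: each weight space of $\avcohns{\sigma}$ or $\avcohr{q}$ is parity-homogeneous (in the \ns{} sector a \pbw{} monomial of $\osp$-weight $-n\sroot$ necessarily involves an even number of odd generators, since each contributes weight $\pm\fwt$ and $\sroot = 2\fwt$), so on such a space the offending sign is a fixed constant and the Shapovalov-matrix entries are genuine polynomials in $\nu(h)$, with $\sigma$ (resp.\ $q$) as a parameter, via the projection onto $\CC[h,\scas]$ (resp.\ $\CC[h,\cas]$). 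With this in hand the proof of \cref{lem:rankconst} applies verbatim: treating $\nu$ as a formal indeterminate and row-reducing the Shapovalov matrix over $\CC(\nu)$, the number of non-zero rows of the reduced echelon form is $\nu$-independent and agrees with the rank of the evaluated matrix for all but finitely many values of $\nu(h)$, hence for all sufficiently large $\nu$ along any fixed-parity sequence of weights.

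The distinction between the single \ns{} limit $\tsfn{\infty}{\arelns{\lambda}{\sigma}}$ and the pair of Ramond limits $\trsfn{\infty}{\pm}{\arelr{\lambda}{q}}$ is then structural. In the \ns{} sector the odd zero modes $x_0$ and $y_0$ are present, so the ground states form a single coherent family over $\osp$ (\cref{prop:ospsimples}) whose weight support is a single $\rlat$-coset; here $x_0$ and $y_0$ interchange the even and odd weight spaces, and the obvious analogue of \cref{prop:injective} for $y_0$ shows the two fixed-parity limits coincide, assembling into the single function $\tsfn{\infty}{\arelns{\lambda}{\sigma}}$. In the Ramond sector $x_0$ and $y_0$ are absent, the even ground states forming instead a coherent family over $\sltwo$ that is decoupled, at the level of zero modes, from the odd weight spaces. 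One must therefore run the rank argument separately on the even ($\nu \in \lambda$) and odd ($\nu \in \lambda+\fwt$) weight spaces along their respective $\sroot$-spaced sequences, producing the two distinct limiting string functions $\tevsfn{\infty}{\arelr{\lambda}{q}}$ and $\todsfn{\infty}{\arelr{\lambda}{q}}$.

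I expect the main obstacle to be the parity bookkeeping rather than any new algebra: one must check that restricting to a fixed parity really does restore polynomial dependence on $\nu(h)$ (in particular that the Koszul signs introduced when reordering odd modes inside $U^{\dag}V$ are $\nu$-independent), verify the analogue of \cref{lem:I=J} in both sectors, and, in the \ns{} sector, confirm that the even and odd limiting ranks genuinely agree. Once the fixed-parity reduction is set up this is all routine, and no ingredient beyond those already used for $\asltwo$ is required.
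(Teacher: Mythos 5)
Your proposal is correct and follows essentially the same route as the paper, which itself only remarks that ``the proof of \cref{lem:rankconst} now readily generalises'' after setting up the Shapovalov forms on $\avcohns{\sigma}$ and $\avcohr{q}$ and noting that the Ramond even and odd limits come from restricting the conformal grade to $\ZZ_{\ge 0}$ and $\ZZ_{\ge 0}+\frac{1}{2}$, respectively. Your fixed-parity reduction is exactly the right way to restore polynomial dependence on $\nu(h)$ (the paper achieves the same thing implicitly by always basing the form at an \emph{even} generator, so that $\scas$ evaluates to the constant $\sigma$), and the rest of your bookkeeping matches the paper's argument.
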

\noindent We remark that the Ramond results follow by restricting to $m \in \ZZ_{\ge 0}$, for the even limiting string functions, and to $m \in \ZZ_{\ge 0} + \frac{1}{2}$, for the odd ones.

\subsection{Stringiness of relaxed modules} \label{sec:stringyosp}

The $\aosp$-analogues of \cref{lem:intersection,prop:injective,lem:inj=stringy} are clear.  We summarise them along with the analogue of \cref{thm:simplestringy} for convenience.
\begin{proposition} \label{prop:simplestringyosp}
	\leavevmode
	\begin{itemize}
		\item Both $y_0$ and $f_0$ act injectively on $\arverns{\lambda}{\sigma}$, while $f_0$ acts injectively on $\arverr{\lambda}{q}$.
		\item If $\lambda \notin \nsimpseto{\sigma}$, then $e_0$ and $x_0$ also act injectively on $\arverns{\lambda}{\sigma}$ and, thus, $\arelns{\lambda}{\sigma}$ is stringy.
		\item If $\lambda \notin \nsimpset{q}$, then $e_0$ also acts injectively on $\arverr{\lambda}{q}$ and, thus, $\arelr{\lambda}{q}$ is \R-stringy.
	\end{itemize}
\end{proposition}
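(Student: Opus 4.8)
The plan is to transcribe the $\asltwo$ development of \cref{sec:charsl2} into the $\aosp$ setting, handling the \ns{} and Ramond sectors in parallel and paying attention to the $\ZZ_2$-grading. First I would establish the analogue of \cref{lem:intersection}: for distinct ground states $v_\mu \neq v_\nu$ one has $\envalg{\aosp^<} v_\mu \cap \envalg{\aosp^<} v_\nu = 0$ in $\arverns{\lambda}{\sigma}$ (and likewise with $\raosp$ in place of $\aosp$ for $\arverr{\lambda}{q}$). This is immediate from the \pbw{} theorem for the purely-negative-mode subalgebra, exactly as before; the only change is that the \pbw{} monomials now carry a parity and the odd generators square to even modes.

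For the first bullet I would run the leading-term argument from the proof of \cref{prop:injective}. Write a non-zero $w \in \arverns{\lambda}{\sigma}$ as $w = \sum_{i=1}^\ell U_i v_{\nu_i}$ with $U_i \in \envalg{\aosp^<} \setminus \set{0}$ and the ground-state weights $\nu_i$ strictly increasing. Since $y_0$ has zero conformal grade, its supercommutator with any element of $\envalg{\aosp^<}$ again lies in $\envalg{\aosp^<}$, so moving $y_0$ to the right gives $y_0 w = \sum_i \brac*{(-1)^{\parity{U_i}} U_i\, y_0 v_{\nu_i} + \comm{y_0}{U_i} v_{\nu_i}}$, and the lowest-weight contribution $\pm U_1 (y_0 v_{\nu_1})$ cannot be cancelled by the intersection lemma. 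The key input is that $y_0$ annihilates no ground state, which holds because $y$ acts injectively on the dense module $\fdeno{\lambda}{\sigma}$ by the construction of \cref{sec:denseosp} (see \eqref{eq:ospaction}); the same argument with $f_0$ yields its injectivity, and in the Ramond sector — where $x_0,y_0$ are absent and the ground states form an $\sltwo$-module — only the $f_0$ statement survives, proved identically using $\raosp$.

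The second and third bullets add the hypotheses $\lambda \notin \nsimpseto{\sigma}$, resp. $\lambda \notin \nsimpset{q}$, under which the space of ground states is a \emph{simple} dense module. Then $e_0$ and $x_0$ (resp. $e_0$ alone) also annihilate no ground state, so the mirror argument — now isolating the top-weight term — shows they act injectively as well. Stringiness then follows by the analogues of \cref{lem:inj=stringy} and \cref{thm:simplestringy}: injectivity of $e_0,x_0,y_0,f_0$ on $\arverns{\lambda}{\sigma}$ restricts to the maximal proper submodule $\armaxns{\lambda}{\sigma}$, forcing its weight multiplicities to be constant along the support; since the relaxed Verma itself is stringy (the example of \cref{sec:stringosp}), so is the quotient $\arelns{\lambda}{\sigma}$. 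Here one must note that, because the \ns{} support is the union of the two $\rlat^0$-cosets $\lambda$ and $\lambda+\fwt$, it is the weight-shifting odd modes $x_0,y_0$ (not merely $e_0,f_0$) that connect adjacent weights, and so yield genuine stringiness across both cosets.

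The Ramond case is where the bookkeeping genuinely differs and is where I expect the only real subtlety. Since $e_0$ and $f_0$ are even, they move weights by $\pm\sroot$ within a single parity class, so the comparison of weight-space dimensions can only be run \emph{separately} on the even support $\lambda$ and the odd support $\lambda+\fwt$; this produces exactly \R-stringiness rather than full stringiness. One then checks that \R-stringiness passes to the quotient $\arelr{\lambda}{q} = \arverr{\lambda}{q}/\armaxr{\lambda}{q}$, which is clear since the even (resp. odd) string function of the quotient is the difference of those of $\arverr{\lambda}{q}$ and $\armaxr{\lambda}{q}$, each constant on its parity class. The main thing to get right throughout is the super-sign arising when the odd zero modes are commuted past \pbw{} monomials, together with the observation from \eqref{eq:ospaction} that the dense action is polynomial only up to a parity-dependent sign, so that all polynomiality-based comparisons must be made within a fixed parity subspace.
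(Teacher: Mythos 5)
Your proposal is correct and takes essentially the same route as the paper, which simply asserts that the $\aosp$-analogues of \cref{lem:intersection,prop:injective,lem:inj=stringy} and \cref{thm:simplestringy} are clear; you have filled in precisely those analogues, with the right attention to the super-signs, to the fact that $x_0,y_0$ shift weights by $\pm\fwt$ (giving full stringiness in the \ns{} sector) while only $e_0,f_0$ are available in the Ramond sector (giving only \R-stringiness). No gaps.
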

To identify these string functions, we employ \cref{thm:limstrfnsexistosp} to identify them with the positive limiting string functions of the non-simple $\arelns{\lambda}{\sigma}$, $\lambda \in \nsimpseto{\sigma}$, and $\arelr{\lambda}{q}$, $\lambda \in \nsimpset{q}$.  The explicit constructions in \cref{sec:denseosp}, combined with the simple quotient analogues of \cref{lem:simpquot,lem:nsimplimstrfn}, now lead to the following conclusions in the \ns{} sector.  Their Ramond counterparts follow similarly by adapting the $\asltwo$ results to $\raosp$.
\begin{theorem} \label{thm:simpstrfnsosp}
	\leavevmode
	\begin{itemize}
		\item If $\sigma \notin \ZZ + \frac{1}{2}$, then the non-zero string functions of the simple relaxed \hwms{} $\arelns{\lambda}{\sigma}$, $\lambda \notin \nsimpseto{\sigma}$, are
		\begin{subequations} \label{eq:strfnsNS}
			\begin{align}
				\fsfn{\nu}{\arelns{\lambda}{\sigma}}{\qq} &= \fsfn{\infty}{\airrns{\mu+\fwt}^-}{\qq} = \fsfn{\infty}{\airrns{\mu}^-}{\qq}
				\qquad \text{for all}\ \nu \in \lambda,
			\intertext{where $\mu$ is either of $\mu_{\pm} = \pm \brac{\sigma - \frac{1}{2}} \fwt$.  If $\sigma \in \ZZ + \frac{1}{2}$, then the non-zero string functions are instead}
				\fsfn{\nu}{\arelns{\lambda}{\sigma}}{\qq} &=
				\begin{cases*}
					\fsfn{\infty}{\airrns{\mu_+ + \fwt}^-}{\qq}, & if $\sigma>0$, \\
					\fsfn{\infty}{\airrns{\mu_-}^-}{\qq}, & if $\sigma<0$.
				\end{cases*}
			\end{align}
		\end{subequations}
		\item Similarly, the non-zero even and odd string functions of the simple relaxed \hwms{} $\arelr{\lambda}{q}$, $\lambda \notin \nsimpset{q}$, are
		\begin{equation} \label{eq:strfnsR}
			\frsfn{\nu}{\pm}{\arelr{\lambda}{q}}{\qq} = \frsfn{\infty}{\pm}{\airrr{\mu+\sroot}^-}{\qq}, \qquad \text{for all}\ \nu \in \lambda,
		\end{equation}
		where $\mu$ now denotes any solution of $\bilin{\mu}{\mu+2\wvec} = q$, if $\sqrt{1+2q} \notin \ZZ$, and the maximal such solution, if $\sqrt{1+2q} \in \ZZ$.
	\end{itemize}
\end{theorem}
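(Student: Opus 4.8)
The plan is to transcribe the $\asltwo$ argument of \cref{sec:simpstrsl2} to the $\aosp$ setting, replacing \cref{thm:simplestringy,thm:simpstrfns} by their superalgebra counterparts. Two ingredients are already in place: \cref{prop:simplestringyosp} tells us that the simple $\arelns{\lambda}{\sigma}$ are stringy and the simple $\arelr{\lambda}{q}$ are \R-stringy, while \cref{thm:limstrfnsexistosp} guarantees that the positive limiting string functions exist and are independent of $\lambda$. Stringiness forces every non-zero string function of a simple $\arelns{\lambda}{\sigma}$ to equal its positive limit, and likewise for each of the even and odd string functions of a simple $\arelr{\lambda}{q}$; by $\lambda$-independence this common limit is the same as the one computed on the \emph{non-simple} relaxed modules. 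So, exactly as in \cref{lem:nsimplimstrfn}, the whole computation reduces to evaluating the limits on the non-simple members, where the composition series worked out in \cref{sec:denseosp} pin down the relevant simple \hw{} quotient via the simple-quotient analogue of \cref{lem:simpquot} (right-exactness of induction lifts the surjection of $\fdeno{\lambda}{\sigma}$ onto its top composition factor to $\arverns{\lambda}{\sigma}$, and hence to $\arelns{\lambda}{\sigma}$).

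For the \ns{} sector with $\sigma \notin \ZZ + \frac{1}{2}$, the two solutions $\mu_{\pm} = \pm \brac*{\sigma - \frac{1}{2}} \fwt$ of $\norm{\mu}^2 = \frac{1}{2} \brac*{\sigma - \frac{1}{2}}^2$ lie in distinct $\rlat^0$-cosets, so $\nsimpseto{\sigma}$ has two members. The series of \cref{sec:denseosp} give top factors $\parrev \fvero{\mu_+ + \fwt}^-$ and $\fvero{\mu_-}^-$; since string functions are insensitive to $\parrev$, these yield limits $\tsfn{\infty}{\airrns{\mu_+ + \fwt}^-}$ and $\tsfn{\infty}{\airrns{\mu_-}^-}$ respectively, and the argument of \cref{lem:nsimplimstrfn} gives $\tfsfn{\infty}{\arelns{\lambda}{\sigma}}{\qq} = \tfsfn{\infty}{\airrns{\mu+\fwt}^-}{\qq}$. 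The equality of the two displayed expressions is then the $\aosp$ analogue of \eqref{eq:nice}: both cosets carry the same $\sigma$, so \cref{thm:limstrfnsexistosp} equates their limits, and combining this with $\mu_- = -\mu_+$ and the conjugation isomorphism $\conj \airrns{\nu}^- \cong \airrns{-\nu}^+$ identifies $\tsfn{\infty}{\airrns{\mu+\fwt}^-}$ with $\tsfn{\infty}{\airrns{\mu}^-}$, exactly as in the remark following \cref{thm:simpstrfns}.

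When $\sigma \in \ZZ + \frac{1}{2}$ the two solutions collapse into a single coset and the composition series of \cref{sec:denseosp} depends on the sign of $\sigma$, so I would split into the two cases, reading off the top factor $\parrev \fvero{\mu_+ + \fwt}^-$ when $\sigma > 0$ and $\fvero{\mu_-}^-$ when $\sigma < 0$; these are precisely the two branches of the second formula in \eqref{eq:strfnsNS}. The Ramond sector is then handled by repeating the $\asltwo$ analysis essentially verbatim: because the twisted zero-mode algebra omits $x_0$ and $y_0$, the ground states of $\arverr{\lambda}{q}$ form a coherent family over $\sltwo$ rather than $\osp$, so the dense modules $\fden{\lambda}{q}$ of \cref{sec:densesl2} and their composition series apply directly to $\raosp$. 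The only change is that \R-stringiness keeps the even and odd string functions apart, so the limiting identity must be established separately in each parity class, giving $\frsfn{\nu}{\pm}{\arelr{\lambda}{q}}{\qq} = \frsfn{\infty}{\pm}{\airrr{\mu+\sroot}^-}{\qq}$ for $\mu$ the (maximal, when $\sqrt{1+2q} \in \ZZ$) solution of $\bilin{\mu}{\mu + 2\wvec} = q$, read in the $\sltwo$ data attached to the \R{} label.

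The main obstacle is bookkeeping rather than any new idea. One must track the parity-reversal functor $\parrev$ carefully through induction and through the Shapovalov pairing of \cref{sec:cohosp}, so that odd-vector contributions are attributed to the correct string function, and one must check that the sign-dependent composition series for $\sigma \in \ZZ + \frac{1}{2}$ really produce the two stated branches. Establishing the coincidence of the two expressions in the $\sigma \notin \ZZ + \frac{1}{2}$ case also needs a little care, since it rests on combining the $\lambda$-independence of \cref{thm:limstrfnsexistosp} with conjugation in the manner of the $\asltwo$ remark; once these points are handled, the remaining steps are routine transcriptions of \cref{sec:simpstrsl2}.
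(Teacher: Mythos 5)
Your proposal is correct and follows essentially the same route as the paper, which likewise obtains \cref{thm:simpstrfnsosp} by combining the stringiness statements of \cref{prop:simplestringyosp} with the $\lambda$-independent positive limiting string functions of \cref{thm:limstrfnsexistosp}, evaluated on the non-simple relaxed modules via the composition series of \cref{sec:denseosp} together with the analogues of \cref{lem:simpquot,lem:nsimplimstrfn}. The sign-dependent case split for $\sigma \in \ZZ + \frac{1}{2}$ and the separate even/odd treatment in the Ramond sector are exactly the adaptations the paper makes.
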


It only remains to demonstrate the stringiness of the non-simple $\arelns{\lambda}{\sigma}$ and $\arelr{\lambda}{q}$.  This is straightforward, but a little tedious because the \ns{} analogue of \cref{lem:simplesubmod} now identifies the simple submodule $\aMod{M} \ira \arelns{\lambda}{\sigma}$ in terms of four separate cases tabulated as follows.
\begin{center}
	\renewcommand{\arraystretch}{1.2}
	\begin{tabular}{C|CC}
		\aMod{M} & \mu = \brac*{\sigma - \tfrac{1}{2}} \fwt & \mu = -\brac*{\sigma - \tfrac{1}{2}} \fwt \\
		\hline
		\sigma \in \ZZ + \tfrac{1}{2} & \parrev \airr{-\mu-\fwt}^+ & \airr{-\mu}^+ \\
		\sigma \notin \ZZ + \tfrac{1}{2} & \airr{\mu}^+ & \parrev \airr{\mu+\fwt}^+
	\end{tabular}
\end{center}
Here, $\mu$ denotes the maximal solution of $\norm{\mu}^2 = \frac{1}{2} \brac*{\sigma - \frac{1}{2}}^2$ in $\lambda$.  The Ramond version has only two cases, just like $\asltwo$:  $\airrr{-\mu-\sroot{}}^+ \ira \arelr{\lambda}{q}$, if $\sqrt{1+2q} \in \ZZ$, and otherwise $\airrr{\mu}^+ \ira \arelr{\lambda}{q}$.  For this sector, $\mu$ is the maximal solution of $\bilin{\mu}{\mu+2\wvec} = q$.  Applying the proof methods of \cref{thm:nsimpstrfns} to these six cases, we arrive at the desired conclusion.
\begin{theorem} \label{thm:nsimpstrfnsosp}
	\leavevmode
	\begin{itemize}
		\item If $\lambda \in \nsimpseto{\sigma}$, then $\arelns{\lambda}{\sigma}$ is stringy and its non-zero string functions are given by \eqref{eq:strfnsNS}.
		\item Similarly, if $\lambda \in \nsimpset{q}$, then $\arelr{\lambda}{q}$ is \R-stringy and its non-zero string functions are given by \eqref{eq:strfnsR}.
	\end{itemize}
\end{theorem}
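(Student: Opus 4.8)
The plan is to transcribe the proof of \cref{thm:nsimpstrfns} into the \ns{} and Ramond settings, treating uniformly the six cases (the four \ns{} cases tabulated above and the two Ramond cases) that record the relevant simple submodule $\aMod{M}$. In each case the strategy is the same: use injectivity of a lowering zero mode to force monotonicity of the string functions of $\arintns{\lambda}{\sigma}$ (resp.\ $\arintr{\lambda}{q}$), deduce the opposite monotonicity for the quotient $\arelns{\lambda}{\sigma}$ (resp.\ $\arelr{\lambda}{q}$) by subtracting from the stringy relaxed Verma module, and then pinch the resulting monotone sequence of string functions between its two limits, which are shown to coincide by means of the conjugation and parity-reversal functors.

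For the \ns{} sector I would first invoke \cref{prop:simplestringyosp} to get that $y_0$ acts injectively on $\arverns{\lambda}{\sigma}$, hence on $\arintns{\lambda}{\sigma}$. Because $y_0$ lowers the weight by $\fwt$, the argument of \cref{lem:inj=stringy} gives that the string functions of $\arintns{\lambda}{\sigma}$ are non-increasing as the weight ascends through the full support $\lambda \cup (\lambda + \fwt)$ in steps of $\fwt$; subtracting from the stringy $\arverns{\lambda}{\sigma}$ (computed in \cref{sec:stringosp}) makes those of $\arelns{\lambda}{\sigma}$ non-decreasing. The positive limit exists and equals the simple string function of \cref{thm:simpstrfnsosp} by \cref{thm:limstrfnsexistosp} and the simple-quotient analogue of \cref{lem:nsimplimstrfn}; the table supplies a simple submodule $\aMod{M} \ira \arelns{\lambda}{\sigma}$, whose negative limiting string function bounds the sequence from below. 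It then remains to check, in each of the four cases, that $\conj$ and $\parrev$ identify this lower limit with the positive limit above: since $\parrev$ leaves ordinary string functions unchanged and $\conj$ swaps positive and negative limits, this reduces to the $\osp$-counterparts of the identities \eqref{eq:nice}, which follow from \cref{thm:limstrfnsexistosp}. The pinch then collapses, giving stringiness of $\arelns{\lambda}{\sigma}$ and the formulae \eqref{eq:strfnsNS}.

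The Ramond sector runs in parallel, with the one essential modification that even and odd string functions must be handled separately, since $\arelr{\lambda}{q}$ is only \R-stringy. The point is that $f_0$ is \emph{even}, so it preserves parity and lowers the weight by $\sroot$; consequently its injectivity on $\arintr{\lambda}{q}$ (\cref{prop:simplestringyosp}) yields monotonicity \emph{within} each parity class, and the pinch argument applies independently to the even and to the odd string functions. Using the two simple-submodule cases ($\airrr{-\mu-\sroot}^+$ when $\sqrt{1+2q} \in \ZZ$ and $\airrr{\mu}^+$ otherwise) together with the conjugation identities applied separately in each parity, both the even and odd sandwiches collapse, yielding \R-stringiness and \eqref{eq:strfnsR}.

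The only real difficulty is bookkeeping. The substance of the argument is entirely inherited from \cref{thm:nsimpstrfns}; what must be verified anew is that, in each of the six cases, the functors $\conj$ and $\parrev$ correctly match the negative limiting string function of the simple submodule $\aMod{M}$ with the positive limiting string function that serves as the upper bound. This hinges on establishing the $\osp$-analogues of \eqref{eq:nice} and on keeping track of the $\parrev$-twists present in two of the \ns{} cases and of the parity-preservation of $f_0$ in the Ramond sector.
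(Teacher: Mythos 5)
Your proposal is correct and follows essentially the same route as the paper, which likewise reduces the result to the six tabulated simple-submodule cases and then applies the pinching argument of \cref{thm:nsimpstrfns} (injectivity of $y_0$ or $f_0$ on the submodule $\aintcohns{}$/$\aintcohr{}$, monotonicity of the quotient's string functions, and identification of the two limits via $\conj$, $\parrev$ and the $\osp$-analogues of \eqref{eq:nice}). Your observation that the $f_0$-monotonicity in the Ramond sector only links weights within a fixed parity class, so that the pinch must be run separately for even and odd strings and yields only \R-stringiness, is exactly the distinction the paper intends.
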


Finally, we present the analogue of \cref{prop:2cfs}.  Again, the $\osp$ proof is virtually identical to the $\sltwo$ one.
\begin{proposition} \label{prop:2cfsosp}
	\leavevmode
	\begin{itemize}
		\item Let $\sigma \notin \ZZ + \frac{1}{2}$.  Then for each $\lambda \in \nsimpseto{\sigma}$, there is a unique solution $\mu$ of $\norm{\mu}^2 = \frac{1}{2} \brac*{\sigma - \frac{1}{2}}^2$ in $\lambda$ and a short exact sequence of one of the following forms:
		\begin{equation} \label{eq:essNS}
			\begin{aligned}
				&\dses{\airrns{\mu}^+}{\arelns{\lambda}{\sigma}}{\parrev \airrns{\mu+\fwt}^-}, & &\text{if}\ \mu = +\brac*{\sigma - \tfrac{1}{2}} \fwt \in \lambda, \\
				&\dses{\parrev \airrns{\mu-\fwt}^+}{\arelns{\lambda}{\sigma}}{\airrns{\mu}^-}, & &\text{if}\ \mu = -\brac*{\sigma - \tfrac{1}{2}} \fwt \in \lambda.
			\end{aligned}
		\end{equation}
		\item Similarly, let $\sqrt{1+2q} \notin \ZZ$.  Then, for each $\lambda \in \nsimpseto{\sigma}$, there is a unique solution $\mu$ of $\bilin{\mu}{\mu+2\wvec} = q$ in $\lambda$ and a short exact sequence
		\begin{equation}
			\dses{\airrr{\mu}^+}{\arelr{\lambda}{q}}{\airrr{\mu+\sroot}^-}.
		\end{equation}
	\end{itemize}
\end{proposition}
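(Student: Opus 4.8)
The plan is to transcribe the proof of \cref{prop:2cfs}, treating the \ns{} and Ramond sectors separately and, within the former, splitting according to which of the two solutions $\mu_\pm = \pm\brac*{\sigma - \tfrac{1}{2}}\fwt$ of \eqref{eq:ospsols} lies in $\lambda$. In each sub-case the first step is to realise the simple submodule supplied by the \ns{} analogue of \cref{lem:simplesubmod} (tabulated before \cref{thm:nsimpstrfnsosp}) as the image of a genuine Verma module. When $\mu = \mu_+ = \brac*{\sigma - \tfrac{1}{2}}\fwt \in \lambda$, the explicit action \eqref{eq:ospaction} gives $\beta_{\mu} = 0$, so the even ground state $v_{\mu}$ is annihilated by $e_0$ and $x_0$ and generates a copy of $\averns{\mu}^+$; when $\mu = \mu_- = -\brac*{\sigma - \tfrac{1}{2}}\fwt \in \lambda$, the same formula gives $\beta_{\mu - \fwt} = 0$, so the odd ground state $v_{\mu - \fwt}$ is a highest-weight vector generating $\parrev \averns{\mu - \fwt}^+$, matching the composition series recorded in \cref{sec:denseosp}.

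First I would run the isomorphism-theorem computation of \cref{prop:2cfs} verbatim. In the case $\mu = \mu_+$ this yields
\begin{equation*}
	\frac{\arelns{\lambda}{\sigma}}{\airrns{\mu}^+} \cong \frac{\arverns{\lambda}{\sigma}}{\averns{\mu}^+ + \arintns{\lambda}{\sigma}},
\end{equation*}
with the evident parity-reversed variant when $\mu = \mu_-$. Since the \ns{} analogue of \cref{lem:simpquot} already identifies $\parrev \airrns{\mu + \fwt}^-$ (respectively $\airrns{\mu}^-$) as the unique simple quotient of $\arverns{\lambda}{\sigma}$, each short exact sequence will follow once I establish the ``saturation'' identity $\averns{\mu}^+ + \arintns{\lambda}{\sigma} = \armaxns{\lambda}{\sigma}$ (respectively with the parity-reversed Verma). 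The inclusion $\subseteq$ is immediate, so the content is the reverse inclusion.

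For this I would take a weight vector $v \in \armaxns{\lambda}{\sigma}$ and reproduce the two-sided squeezing argument of \cref{prop:2cfs}: the \ns{} version of \cref{lem:I=J} gives $e_0^m v \in \arintns{\lambda}{\sigma}$ for $m \gg 0$, while $f_0$ acts locally nilpotently on $\arverns{\lambda}{\sigma} \big/ \averns{\mu}^+$ (the lowest surviving ground state is killed by $f_0$), so $f_0^n v \in \averns{\mu}^+$ for $n \gg 0$. Hence the image of $v$ in $\armaxns{\lambda}{\sigma} \big/ \brac[\big]{\averns{\mu}^+ + \arintns{\lambda}{\sigma}}$ is annihilated by high powers of both $e_0$ and $f_0$, and so generates a finite-dimensional module over the even zero-mode subalgebra spanned by $e_0$, $h_0$, $f_0$. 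The contradiction is the same as before: because $\sigma \notin \ZZ + \tfrac{1}{2}$, the coefficient of $\fwt$ in $\mu_\pm$ is non-integral, so $\mu \notin \wlat$ and every $h_0$-eigenvalue occurring on $\arelns{\lambda}{\sigma}$ is non-integral. A non-zero finite-dimensional $\sltwo$-module admits no such weight, forcing the image to vanish and giving the saturation identity. The Ramond statement is literally the $\asltwo$ argument of \cref{prop:2cfs} with $\raosp$ in place of $\asltwo$: there are no odd zero modes, so the ground states form an $\sltwo$-coherent family, $v_{\mu}$ generates $\averr{\mu}^+$ with simple image $\airrr{\mu}^+$, and $\sqrt{1+2q} \notin \ZZ$ again forces $\mu \notin \wlat$, so the identical finite-dimensionality obstruction closes the proof.

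The only genuinely new bookkeeping---and where I expect to spend care rather than meet a real obstacle---is the parity-reversed sub-case $\mu = \mu_-$, in which the Verma submodule is generated by an \emph{odd} vector $v_{\mu - \fwt}$. There one pushes down into $\parrev \averns{\mu - \fwt}^+$ using either the even mode $f_0$ or the odd lowering mode $y_0$; both are locally nilpotent on $\arverns{\lambda}{\sigma} \big/ \parrev \averns{\mu - \fwt}^+$ since each annihilates the lowest surviving ground state $v_{\mu}$, so the $\sltwo$-finite-dimensionality argument applies unchanged. Verifying the relevant $\beta$-vanishing identities and tracking parities through $\parrev$ is routine given \eqref{eq:ospaction}, so no step should present a conceptual difficulty beyond what \cref{prop:2cfs} has already resolved.
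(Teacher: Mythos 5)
Your proposal is correct and follows exactly the route the paper intends: the paper itself gives no separate proof of \cref{prop:2cfsosp}, stating only that the $\osp$ argument is ``virtually identical'' to that of \cref{prop:2cfs}, and your write-up is a faithful and accurate expansion of that transfer (the $\beta$-vanishing computations, the saturation identity $\averns{\mu}^+ + \arintns{\lambda}{\sigma} = \armaxns{\lambda}{\sigma}$, and the non-integrality of the $h_0$-eigenvalues all check out in both \ns{} sub-cases and in the Ramond sector). No gaps.
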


\section{Application to admissible-level $\ospvoa{\kk}$-modules} \label{sec:voaosp}

We conclude by applying our results to determine exact sequences and (super)characters for the $\arelns{\lambda}{\sigma}$ and $\arelr{\lambda}{q}$ that define modules over the simple affine \svoa{} $\ospvoa{\kk}$, when $\kk$ is admissible:
\begin{equation}
	\kk + \dcox = \frac{u}{2v}, \qquad u \in \ZZ_{\ge 2},\ v \in \ZZ_{\ge 1},\ \frac{u-v}{2} \in \ZZ,\ \gcd \set*{\frac{u-v}{2},v} = 1.
\end{equation}
Note that the dual Coxeter number of $\osp$ is $\dcox = \frac{3}{2}$.  As with the $\asltwo$ case, the Sugawara construction fixes the conformal weights of the ground states of $\arelns{\lambda}{\sigma}$ and $\arelr{\lambda}{q}$ to be
\begin{equation} \label{eq:confwtosp}
	\Delta = \Delta^{\NS}_{\sigma} = \frac{\sigma^2 - 1/4}{4 (\kk + \dcox)} \qquad \text{and} \qquad
	\Delta = \Delta^{\R}_q = \frac{q - \kk/4}{2 (\kk + \dcox)},
\end{equation}
respectively.

The relaxed \hw{} $\ospvoa{\kk}$-modules are classified in \cite{WooAdm18,CreCos18}.  Omitting the \hw{} simples, the classification may be presented in terms of two parameters $r=1,\dots,u-1$ and $s=1,\dots,v-1$, with the module belonging to the \ns{} sector when $r-s$ is odd and to the Ramond sector when $r-s$ is even.  Indeed, the $\arelns{\lambda}{\sigma}$ and $\arelr{\lambda}{q}$ are $\ospvoa{\kk}$-modules when
\begin{equation}
		r-s \in 2\ZZ + 1, \quad \sigma = \sigma_{r,s} = \frac{1}{2} \brac*{r - \frac{u}{v} s} \qquad \text{and when} \qquad
		r-s \in 2\ZZ, \quad q = q_{r,s} = \frac{1}{8} \brac*{r - \frac{u}{v} s}^2 - \frac{1}{2},
\end{equation}
respectively.  We note the ``Kac table''-type symmetries $\sigma_{u-r,v-s} = -\sigma_{r,s}$ and $q_{u-r,v-s} = q_{r,s}$.  Moreover, as $\frac{u-v}{2}$ and $v$ are coprime, we have
\begin{equation}
	\sigma_{r,s} - \frac{1}{2} = \frac{1}{2} \brac*{r-1 - \frac{u}{v} s} = \frac{r-s-1}{2} - \frac{(u-v)/2}{v} s \notin \ZZ
\end{equation}
in the \ns{} sector ($r-s$ odd) and
\begin{equation}
	\sqrt{1+2q_{r,s}} = \frac{1}{2} \abs*{r - \frac{u}{v} s} = \abs*{\frac{r-s}{2} - \frac{(u-v)/2}{v} s} \notin \ZZ
\end{equation}
in the Ramond sector ($r-s$ even).  There are therefore two distinct non-simple relaxed \hwms{} $\arelns{\lambda}{\sigma_{r,s}}$ or $\arelr{\lambda}{q_{r,s}}$, for each $r$ and $s$ (modulo the Kac symmetries).  In particular, the \ns{} solutions to $\norm{\mu}^2 = \frac{1}{2} \brac*{\sigma_{r,s} - \frac{1}{2}}^2$ and the Ramond solutions to $\bilin{\mu}{\mu+2\wvec} = q_{r,s}$ are $\mu = \pm \frac{1}{2} \brac*{r-1 - \frac{u}{v} s} \fwt$ and $\mu = -\wvec \pm \frac{1}{2} \brac*{r - \frac{u}{v} s} \fwt$, respectively.  We therefore define
\begin{equation} \label{eq:defmu}
	\mu_{r,s} =
	\begin{dcases*}
		\frac{1}{2} \brac*{r-1 - \frac{u}{v} s} \fwt, & if $r-s$ is odd, \\
		\frac{1}{2} \brac*{r-2 - \frac{u}{v} s} \fwt, & if $r-s$ is even.
	\end{dcases*}
\end{equation}
Note that $-\mu_{u-r,v-s} = \mu_{r,s} + \fwt$, if $r-s$ is odd, and $-\mu_{u-r,v-s} = \mu_{r,s} + \sroot$, if $r-s$ is even.

\cref{prop:2cfsosp} now gives the $\osp$ analogues of \cref{thm:esvoa}.
\begin{theorem} \label{thm:esvoaosp}
	\leavevmode
	\begin{subequations}
		\begin{itemize}
			\item Each admissible-level $\ospvoa{\kk}$-module $\arelns{\mu_{r,s}}{\sigma_{r,s}}$, where $r=1,\dots,u-1$ and $s=1,\dots,v-1$ satisfy $r-s \in 2\ZZ+1$, is a non-split extension of a (conjugate) simple \hwm{} $\parrev \airrns{\mu_{r,s}+\fwt}^- = \parrev \airrns{-\mu_{u-r,v-s}}^-$ by the simple \hwm{} $\airrns{\mu_{r,s}}^+$.  In other words, the following sequence is exact:
			\begin{equation} \label{eq:esNS}
				\dses{\airrns{\mu_{r,s}}^+}{\arelns{\mu_{r,s}}{\sigma_{r,s}}}{\parrev \airrns{-\mu_{u-r,v-s}}^-}.
			\end{equation}
			\item Similarly, each admissible-level (twisted) $\ospvoa{\kk}$-module $\arelr{\mu_{r,s}}{q_{r,s}}$, where $r=1,\dots,u-1$ and $s=1,\dots,v-1$ satisfy $r-s \in 2\ZZ$, is a non-split extension of a (conjugate) simple \hwm{} $\airrr{\mu_{r,s}+\sroot}^- = \airrr{-\mu_{u-r,v-s}}^-$ by the simple \hwm{} $\airrns{\mu_{r,s}}^+$.  In other words, the following sequence is exact:
			\begin{equation} \label{eq:esR}
				\dses{\airrr{\mu_{r,s}}^+}{\arelr{\mu_{r,s}}{q_{r,s}}}{\airrr{-\mu_{u-r,v-s}}^-}.
			\end{equation}
		\end{itemize}
	\end{subequations}
\end{theorem}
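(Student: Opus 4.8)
The plan is to read off both exact sequences as essentially immediate consequences of \cref{prop:2cfsosp}, so that the real content is (i) verifying its hypotheses at admissible level, (ii) matching the distinguished weight $\mu_{r,s}$ to the correct solution so that the quotient acquires the stated conjugate form, and (iii) upgrading ``short exact sequence'' to ``non-split short exact sequence''. For step (i) I would invoke the two displayed computations preceding the theorem: coprimality of $\tfrac{1}{2}(u-v)$ and $v$ forces $\sigma_{r,s} - \tfrac{1}{2} \notin \ZZ$, hence $\sigma_{r,s} \notin \ZZ + \tfrac{1}{2}$, in the \ns{} sector ($r-s$ odd), and $\sqrt{1 + 2 q_{r,s}} \notin \ZZ$ in the Ramond sector ($r-s$ even). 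These are precisely the hypotheses of the first and second bullets of \cref{prop:2cfsosp}, so each $\arelns{\mu_{r,s}}{\sigma_{r,s}}$ and $\arelr{\mu_{r,s}}{q_{r,s}}$ sits in a short exact sequence of the shape asserted there.

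For step (ii) I would carry out the weight bookkeeping. In the \ns{} sector, $\mu_{r,s} = \tfrac{1}{2}\brac{r-1-\tfrac{u}{v}s}\fwt = \brac{\sigma_{r,s}-\tfrac{1}{2}}\fwt$ is exactly the $+$ solution of $\norm{\mu}^2 = \tfrac{1}{2}\brac{\sigma_{r,s}-\tfrac{1}{2}}^2$, so the first line of \eqref{eq:essNS} applies and yields $\dses{\airrns{\mu_{r,s}}^+}{\arelns{\mu_{r,s}}{\sigma_{r,s}}}{\parrev \airrns{\mu_{r,s}+\fwt}^-}$; the identity $-\mu_{u-r,v-s} = \mu_{r,s} + \fwt$ recorded after \eqref{eq:defmu} then rewrites the quotient as $\parrev \airrns{-\mu_{u-r,v-s}}^-$, giving \eqref{eq:esNS}. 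In the Ramond sector, reading $\wvec$ as the $\sltwo$ Weyl vector (these modules are induced from $\sltwo$-modules, so their parametrisation is $\sltwo$ data), $\mu_{r,s} = \tfrac{1}{2}\brac{r-2-\tfrac{u}{v}s}\fwt$ is the maximal solution of $\bilin{\mu}{\mu+2\wvec} = q_{r,s}$, so the second bullet of \cref{prop:2cfsosp} gives $\dses{\airrr{\mu_{r,s}}^+}{\arelr{\mu_{r,s}}{q_{r,s}}}{\airrr{\mu_{r,s}+\sroot}^-}$, and $-\mu_{u-r,v-s} = \mu_{r,s} + \sroot$ converts the quotient to $\airrr{-\mu_{u-r,v-s}}^-$, giving \eqref{eq:esR}. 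The conjugation relations $\conj \airrns{\mu}^+ \cong \airrns{-\mu}^-$ and $\conj \airrr{\mu}^+ \cong \airrr{-\mu}^-$ justify the description of the quotients as conjugate simple \hwms{}.

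The only substantive point, and the one I expect to require genuine (if brief) care, is non-splitness, which \cref{prop:2cfsosp} does not itself assert. The hard part is really just this, and I would settle it by transporting the indecomposability from the space of ground states, exactly as in the $\asltwo$ case underlying \cref{thm:esvoa}. Suppose \eqref{eq:esNS} split; then $\arelns{\mu_{r,s}}{\sigma_{r,s}} \cong \airrns{\mu_{r,s}}^+ \oplus \parrev \airrns{-\mu_{u-r,v-s}}^-$ as $\aosp$-modules. All three modules have their ground states at the common conformal weight $\Delta^{\NS}_{\sigma_{r,s}}$, so passing to the minimal-$L_0$-eigenspace would produce a direct-sum decomposition of the space of ground states into the simple \hw{} $\osp$-module $\fvero{\mu_{r,s}}^+$ and the simple \lw{} $\osp$-module $\parrev \fvero{-\mu_{u-r,v-s}}^-$. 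But that space of ground states is, by the construction of \cref{sec:denseosp}, the \emph{indecomposable} non-simple dense module $\fdeno{\mu_{r,s}}{\sigma_{r,s}}$ (here $\mu_{r,s} \notin \wlat_{\ge}$), a contradiction. The Ramond case is identical, with the ground states forming the indecomposable $\sltwo$-module $\fden{\mu_{r,s}}{q_{r,s}}$. Hence both sequences are non-split, completing the proof.
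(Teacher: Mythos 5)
Your proof is correct and follows the paper's own route: the paper likewise obtains both sequences directly from \cref{prop:2cfsosp}, with the hypotheses $\sigma_{r,s} \notin \ZZ + \tfrac{1}{2}$ and $\sqrt{1+2q_{r,s}} \notin \ZZ$ verified by the two coprimality computations and the weight identifications $-\mu_{u-r,v-s} = \mu_{r,s}+\fwt$ (resp.\ $\mu_{r,s}+\sroot$) recorded just before the theorem. Your explicit non-splitness argument, transporting indecomposability from the ground-state modules $\fdeno{\mu_{r,s}}{\sigma_{r,s}}$ and $\fden{\mu_{r,s}}{q_{r,s}}$, correctly supplies a detail the paper leaves implicit; the only quibble is that calling $\mu_{r,s}$ the \emph{maximal} Ramond solution is unnecessary (and can fail when $vr < us$) --- all that \cref{prop:2cfsosp} requires is that it be the unique solution in its coset, which holds since $\sqrt{1+2q_{r,s}} \notin \ZZ$.
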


\begin{remark}
	The \ns{} exact sequence \eqref{eq:esNS} follows directly from the first exact sequence of \eqref{eq:essNS}.  If we had instead used the second exact sequence, we would have instead arrived at
	\begin{equation}
		\dses{\parrev \airrns{\mu_{u-r,v-s}}^+}{\arelns{-\mu_{r,s}}{\sigma_{r,s}}}{\airrns{-\mu_{r,s}}^-}.
	\end{equation}
	However, this is seen to be equivalent to \eqref{eq:esNS} by replacing $r$ by $u-r$, $s$ by $v-s$, applying the parity-reversal functor $\parrev$, and using the isomorphism $\parrev \arelns{\lambda}{\sigma} \cong \arelns{\lambda+\fwt}{-\sigma}$ (see \cref{sec:swosp}).
\end{remark}

We now turn to the characters and supercharacters of the $\arelns{\lambda}{\sigma_{r,s}}$, $\lambda \in \h^* / \rlat^0$ and $r-s$ odd, and $\arelr{\lambda}{q_{r,s}}$, $\lambda \in \h^* / \rlat$ and $r-s$ even.  The computations are very similar to that in \cref{sec:voasl2}, reducing the string functions to the negative limiting string functions of $\airrns{\mu_{r,s}}^+$ and $\airrr{\mu_{r,s}}^+$, respectively.  Normalising characters and supercharacters by $\qq^{-\cc/24}$, where
\begin{equation}
	\cc = \frac{\kk}{\kk + \dcox} = 1 - \frac{3v^2}{uv}
\end{equation}
is the central charge of $\ospvoa{\kk}$, \cref{eq:BGG} still holds \cite{IohFus01a} when we replace the $\slvoa{\kk}$-modules by their \ns{} $\ospvoa{\kk}$ analogues.  \cref{prop:vermasfnosp} thus gives the \ns{} string functions:
\begin{align}
	\fsfn{-\infty}{\airrns{\mu_{r,s}}^+}{\qq} &= \sum_{n \in \ZZ} \brac*{\fsfn{-\infty}{\averns{\mu_{2nu+r,s}}^+}{\qq} - \fsfn{-\infty}{\averns{\mu_{2nu-r,s}}^+}{\qq}} \\
	\notag &= \frac{\fjth{2}{1;\qq}}{2 \eta(\qq)^4} \sum_{n \in \ZZ} \brac*{\qq^{\Delta_{2nu+r,s} - \cc/24 + 1/24} - \qq^{\Delta_{2nu-r,s} - \cc/24 + 1/24}} \\
	\notag &= \frac{\fjth{2}{1;\qq}}{2 \eta(\qq)^4} \sum_{n \in \ZZ} \brac*{\qq^{\Delta^{N=1}_{2nu+r,s} - \cc^{N=1}/24} - \qq^{\Delta^{N=1}_{2nu-r,s} - \cc^{N=1}/24}}.
\end{align}
Here, $\Delta_{r,s} = \Delta^{\NS}_{\sigma_{r,s}}$ and the $N=1$ conformal weights and central charge are given by
\begin{equation}
	\Delta^{N=1}_{r,s} = \frac{(vr-us)^2-(v-u)^2}{8uv} + \frac{1}{32} \brac*{1 - (-1)^{r-s}}, \qquad \cc^{N=1} = \frac{3}{2} - \frac{3(v-u)^2}{uv}.
\end{equation}
The link to the $N=1$ superconformal algebra is made manifest through comparing this limiting string function with the character
\begin{equation} \label{eq:RchN=1}
	\chi^{N=1}_{r,s}(\qq) = \frac{\qq^{-\cc^{N=1}/24}}{\eta(\qq)} \sqrt{\frac{\fjth{2}{1;\qq}}{2 \eta(\qq)}} \sum_{n \in \ZZ} \brac*{\qq^{\Delta^{N=1}_{2nu+r,s}} - \qq^{\Delta^{N=1}_{2nu-r,s}}}
\end{equation}
of the simple \emph{Ramond} \hw{} $N=1$ module of conformal weight $\Delta^{N=1}_{r,s}$ and central charge $\cc^{N=1}$.  (Note that $r-s$ odd specifies the Ramond sector of the $N=1$ superconformal minimal models.)

We thereby obtain the (super)characters of the \ns{} relaxed \hwms{}.
\begin{theorem} \label{thm:NSchosp}
	The characters of the admissible-level \ns{} $\ospvoa{\kk}$-modules $\arelns{\lambda}{\sigma_{r,s}}$, with $\lambda \in \h^* / \rlat^0$, $r=1,\dots,u-1$, $s=1,\dots,v-1$ and $r-s \in 2\ZZ+1$, are given by
	\begin{equation} \label{eq:NSchar}
		\fch{\arelns{\lambda}{\sigma_{r,s}}}{\zz;\qq} = \frac{\chi^{N=1}_{r,s}(\qq)}{\eta(\qq)^2} \sqrt{\frac{\fjth{2}{1;\qq}}{2 \eta(\qq)}} \sqbrac*{\sum_{\mu \in \lambda} \zz^{\mu} + \sum_{\mu \in \lambda + \fwt} \zz^{\mu}}.
	\end{equation}
	The supercharacters are given by replacing the sum of the two sums by their difference.
\end{theorem}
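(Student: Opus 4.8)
The plan is to reduce the full character to a single string function times a generating function over the weight support, and then to feed in the limiting string function computed in the display immediately preceding the statement. Since $\arelns{\lambda}{\sigma_{r,s}}$ is stringy for every $\lambda$ --- whether simple (\cref{thm:simpstrfnsosp}) or non-simple (\cref{thm:nsimpstrfnsosp}) --- all of its non-zero string functions coincide, and its $\osp$-weight support is exactly $\lambda \cup (\lambda + \fwt)$. First I would therefore factor
\begin{equation*}
	\fch{\arelns{\lambda}{\sigma_{r,s}}}{\zz;\qq} = \fsfn{\nu}{\arelns{\lambda}{\sigma_{r,s}}}{\qq} \sqbrac*{\sum_{\mu \in \lambda} \zz^{\mu} + \sum_{\mu \in \lambda + \fwt} \zz^{\mu}},
\end{equation*}
with $\nu$ any weight in the support, and then invoke \eqref{eq:strfnsNS} together with the conjugation identity $\tsfn{\infty}{\conj \aMod{M}} = \tsfn{-\infty}{\aMod{M}}$ (conjugation reflects the $\osp$-weight) to rewrite the common string function as $\tsfn{-\infty}{\airrns{\mu_{r,s}}^+}$.

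The substance of the argument is the evaluation of this negative limiting string function, which is precisely the multi-line display preceding the statement: I would apply the \ns{} analogue of the \bgg{}-type resolution \eqref{eq:BGG} (valid here by \cite{IohFus01a}) to express $\airrns{\mu_{r,s}}^+$ as an alternating sum of Verma characters, pass to string functions using \cref{prop:vermasfnosp} (which guarantees that each Verma limiting string function exists), and take the $m \to -\infty$ limit term by term. The only point requiring care is the interchange of this limit with the infinite sum over $n$; this is legitimate because, at each fixed conformal grade, only finitely many summands contribute, so the limit is genuinely taken coefficient-wise in the sense of \eqref{eq:gfpslim}. Substituting the conformal weights $\Delta_{r,s} = \Delta^{\NS}_{\sigma_{r,s}}$ and rewriting them in terms of the $N=1$ data $\Delta^{N=1}_{r,s}$ and $\cc^{N=1}$ then produces the alternating theta-quotient displayed above.

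It then remains to recognise the \emph{Ramond} $N=1$ minimal-model character \eqref{eq:RchN=1} inside this expression and to clear the surviving $\eta$ and $\vartheta$ factors. Solving \eqref{eq:RchN=1} for the alternating sum and substituting yields
\begin{equation*}
	\fsfn{-\infty}{\airrns{\mu_{r,s}}^+}{\qq} = \frac{\chi^{N=1}_{r,s}(\qq)}{\eta(\qq)^2} \sqrt{\frac{\fjth{2}{1;\qq}}{2 \eta(\qq)}},
\end{equation*}
after the elementary simplification $\tfrac{1}{2}\eta(\qq)^{-3}\sqrt{2\eta(\qq)\fjth{2}{1;\qq}} = \eta(\qq)^{-2}\sqrt{\fjth{2}{1;\qq}/(2\eta(\qq))}$; multiplying by the weight generating function gives \eqref{eq:NSchar}.

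Finally, for the supercharacter I would observe that parity is constant on each $\rlat^0$-coset of the weight support: because the \ns{} moding is integral, every odd generator shifts the weight by $\pm\fwt \notin \rlat^0$ while every even generator preserves the coset, so all states of weight in $\lambda$ are even and all those of weight in $\lambda + \fwt$ are odd. Inserting $(-1)^{\parity{\mu}}$ therefore leaves the $\qq$-prefactor untouched and merely flips the sign of the second sum, replacing the sum of the two $\zz$-sums by their difference, as claimed. I expect the genuine obstacle to lie entirely in the theta-function bookkeeping of the middle two steps; the factorisation and the supercharacter statement are immediate once stringiness and the constancy of parity are in hand.
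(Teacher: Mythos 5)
Your proposal is correct and follows essentially the same route as the paper: stringiness reduces the character to a single string function times the weight generating function, the string function is identified with $\tfsfn{-\infty}{\airrns{\mu_{r,s}}^+}{\qq}$ via conjugation, and this is evaluated through the \bgg{}-type resolution, \cref{prop:vermasfnosp} and recognition of the Ramond $N=1$ character \eqref{eq:RchN=1}. The extra justifications you supply (the coefficient-wise interchange of limit and sum, and the constancy of parity on each $\rlat^0$-coset for the supercharacter) are left implicit in the paper but are accurate.
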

\begin{remark}
	We mention that \eqref{eq:RchN=1} is technically not the correct character of the simple \emph{$\ZZ_2$-graded} Ramond $N=1$ module described above because its leading coefficient is $1$, whereas almost all Ramond modules have a two-dimensional space of ground states.  More precisely, $\chi^{N=1}_{r,s}$ is the character of the given simple $N=1$ module when $u,v \in 2\ZZ$, $r=\frac{u}{2}$ and $s=\frac{v}{2}$.  Otherwise, the correct character is obtained by multiplying by $2$.

	Another way of looking at this is to note that while \eqref{eq:RchN=1} is indeed the character of a simple Ramond $N=1$ module, this module only admits a consistent $\ZZ_2$-grading by parity if $u,v \in 2\ZZ$, $r=\frac{u}{2}$ and $s=\frac{v}{2}$.  As far as \cft{} is concerned, these non-$\ZZ_2$-gradable modules are not acceptable in a consistent space of states because they cannot be assigned supercharacters.
\end{remark}

The Ramond (super)characters are a little more subtle to deduce.  Happily, the Ramond version of \cref{eq:BGG} continues to hold.  This does not seem to be mentioned in \cite{IohFus01a}, but is a simple consequence of the existence \cite[Eq.~(3.24)]{RidAdm17} of an invertible functor mapping $\averr{\mu}^+$ to $\averns{\kk \fwt - \mu}^+$.  The subtlety of the computation arises because one has to take into account the relative parity of the Verma submodules of $\averr{\mu_{r,s}}^+$ when determining whether their limiting even or odd string functions contribute to the limiting even or odd string function of $\averr{\mu_{r,s}}^+$ or vice versa.  Indeed, we have
\begin{equation}
	\mu_{2nu+r,s} - \mu_{r,s} = nu \fwt \qquad \text{and} \qquad \mu_{2nu-r,s} - \mu_{r,s} = (nu-r) \fwt,
\end{equation}
hence, by \cref{prop:vermasfnosp}, the limiting string functions must satisfy
\begin{align}
	\frsfn{-\infty}{\pm}{\airrr{\mu_{r,s}}^+}{\qq}
	&= \sum_{n \in \ZZ} \brac*{\frsfn{-\infty}{\pm (-1)^{nu}}{\aver{\mu_{2nu+r,s}}^+}{\qq} - \frsfn{-\infty}{\pm (-1)^{nu-r}}{\aver{\mu_{2nu-r,s}}^+}{\qq}} \\
	\notag &= \frac{\qq^{-\cc/24 + 1/6}}{2 \eta(\qq)^4} \sum_{n \in \ZZ} \brac[\Big]{\qq^{\Delta_{2nu+r,s}} \brac[\big]{\fjth{3}{1;\qq} \pm (-1)^{nu} \fjth{4}{1;\qq}} - \qq^{\Delta_{2nu-r,s}} \brac[\big]{\fjth{3}{1;\qq} \pm (-1)^{nu-r} \fjth{4}{1;\qq}}} \\
	\notag &= \qq^{(3/2 - \cc^{N=1})/24} \biggl[ \frac{\fjth{3}{1;\qq}}{2 \eta(\qq)^4} \sum_{n \in \ZZ} \brac*{\qq^{\Delta^{N=1}_{2nu+r,s}} - \qq^{\Delta^{N=1}_{2nu-r,s}}} \biggr. \\
	\notag &\hphantom{= \qq^{(3/2 - \cc^{N=1})/24}} \biggl. \pm \frac{\fjth{4}{1;\qq}}{2 \eta(\qq)^4} \sum_{n \in \ZZ} (-1)^{nu} \brac*{\qq^{\Delta^{N=1}_{2nu+r,s}} - (-1)^r \qq^{\Delta^{N=1}_{2nu-r,s}}} \biggr],
\end{align}
where $\Delta_{r,s} = \Delta_{q_{r,s}}^{\R}$.  Noting that character and supercharacter of the simple \ns{} \hw{} $N=1$ module of conformal weight $\Delta^{N=1}_{r,s}$ and central charge $\cc^{N=1}$ are
\begin{subequations}
	\begin{align}
		\chi^{N=1}_{r,s}(\qq) &= \frac{\qq^{(3/2-\cc^{N=1})/24}}{\eta(\qq)} \sqrt{\frac{\fjth{3}{1;\qq}}{\eta(\qq)}} \sum_{n \in \ZZ} \brac*{\qq^{\Delta^{N=1}_{2nu+r,s}} - \qq^{\Delta^{N=1}_{2nu-r,s}}} \label{eq:NSchN=1} \\
		\text{and} \qquad \overline{\chi}^{N=1}_{r,s}(\qq) &= \frac{\qq^{(3/2-\cc^{N=1})/24}}{\eta(\qq)} \sqrt{\frac{\fjth{4}{1;\qq}}{\eta(\qq)}} \sum_{n \in \ZZ} (-1)^{nu} \brac*{\qq^{\Delta^{N=1}_{2nu+r,s}} - (-1)^r \qq^{\Delta^{N=1}_{2nu-r,s}}}, \label{eq:NSschN=1}
	\end{align}
\end{subequations}
respectively, the result for the Ramond relaxed \hwms{} follows.
\begin{theorem} \label{thm:Rchosp}
	The admissible-level Ramond $\ospvoa{\kk}$-modules $\arelr{\lambda}{q_{r,s}}$, with $\lambda \in \h^* / \rlat$, $r=1,\dots,u-1$, $s=1,\dots,v-1$ and $r-s \in 2\ZZ$, have the following characters:
	\begin{align} \label{eq:Rchar}
		\fch{\arelr{\lambda}{q_{r,s}}}{\zz;\qq} &= \brac*{\frac{\chi^{N=1}_{r,s}(\qq)}{2 \eta(\qq)^2} \sqrt{\frac{\fjth{3}{1;\qq}}{\eta(\qq)}} + \frac{\overline{\chi}^{N=1}_{r,s}(\qq)}{2 \eta(\qq)^2} \sqrt{\frac{\fjth{4}{1;\qq}}{\eta(\qq)}}} \sum_{\mu \in \lambda} \zz^{\mu} \\
		\notag &\, + \brac*{\frac{\chi^{N=1}_{r,s}(\qq)}{2 \eta(\qq)^2} \sqrt{\frac{\fjth{3}{1;\qq}}{\eta(\qq)}} - \frac{\overline{\chi}^{N=1}_{r,s}(\qq)}{2 \eta(\qq)^2} \sqrt{\frac{\fjth{4}{1;\qq}}{\eta(\qq)}}} \sum_{\mu \in \lambda + \fwt} \zz^{\mu}.
	\end{align}
	The supercharacters are given by replacing the sum of the two sums by their difference.
\end{theorem}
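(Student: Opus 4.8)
The plan is to follow the template of \cref{thm:chsl2,thm:NSchosp}, exploiting that all of the genuinely analytic work has already been carried out in the evaluation of $\frsfn{-\infty}{\pm}{\airrr{\mu_{r,s}}^+}{\qq}$ preceding the statement. By \cref{thm:simpstrfnsosp,thm:nsimpstrfnsosp}, the module $\arelr{\lambda}{q_{r,s}}$ is \R-stringy for every $\lambda$, and \eqref{eq:strfnsR} identifies its non-zero even and odd string functions with the positive limiting even and odd string functions $\frsfn{\infty}{\pm}{\airrr{\mu_{r,s}+\sroot}^-}{\qq}$. Here $\sqrt{1+2q_{r,s}} = \tfrac{1}{2} \abs*{r - \tfrac{u}{v} s} \notin \ZZ$, so with $\mu_{r,s}$ as in \eqref{eq:defmu} the identity \eqref{eq:strfnsR} applies, and \cref{thm:limstrfnsexistosp} makes these limits independent of $\lambda$.

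First I would convert these positive limits into negative limits of a \hwm{}, exactly as in the passage from \eqref{eq:simlim=lim} to \eqref{eq:usefulsfnid} for $\asltwo$. Since $\conj \airrr{\mu}^+ \cong \airrr{-\mu}^-$ and conjugation reverses the sign of the $\osp$-weight while respecting the even/odd splitting, it interchanges positive and negative limiting string functions of equal parity; combined with the $\lambda$-independence of \cref{thm:limstrfnsexistosp} (the Ramond counterpart of \eqref{eq:nice}), this yields
\begin{equation*}
	\frsfn{\infty}{\pm}{\airrr{\mu_{r,s}+\sroot}^-}{\qq} = \frsfn{-\infty}{\pm}{\airrr{\mu_{r,s}}^+}{\qq}.
\end{equation*}
Hence the even (respectively odd) string functions of $\arelr{\lambda}{q_{r,s}}$ coincide with the negative limiting even (respectively odd) string functions of the admissible \hwm{} $\airrr{\mu_{r,s}}^+$.

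Next I would substitute the explicit value of $\frsfn{-\infty}{\pm}{\airrr{\mu_{r,s}}^+}{\qq}$ obtained above from the Ramond analogue of the resolution \eqref{eq:BGG} together with \cref{prop:vermasfnosp}. Matching its two square-bracketed contributions against the definitions \eqref{eq:NSchN=1} of $\chi^{N=1}_{r,s}$ and \eqref{eq:NSschN=1} of $\overline{\chi}^{N=1}_{r,s}$ gives
\begin{equation*}
	\frsfn{-\infty}{\pm}{\airrr{\mu_{r,s}}^+}{\qq} = \frac{\chi^{N=1}_{r,s}(\qq)}{2 \eta(\qq)^2} \sqrt{\frac{\fjth{3}{1;\qq}}{\eta(\qq)}} \pm \frac{\overline{\chi}^{N=1}_{r,s}(\qq)}{2 \eta(\qq)^2} \sqrt{\frac{\fjth{4}{1;\qq}}{\eta(\qq)}}.
\end{equation*}
Multiplying the even ($+$) string function by $\sum_{\mu \in \lambda} \zz^{\mu}$ and the odd ($-$) one by $\sum_{\mu \in \lambda+\fwt} \zz^{\mu}$ and adding reproduces \eqref{eq:Rchar}; the supercharacter follows on inserting the parity sign, that is, on replacing the sum of the two sums by their difference.

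The one delicate point is the parity bookkeeping, and it is exactly the step already negotiated in the pre-statement computation: in the Ramond resolution the Verma submodules $\aver{\mu_{2nu \pm r,s}}^+$ appear with alternating parities, so a factor $(-1)^{nu}$ must be carried when deciding whether their even or odd string functions contribute to the even or odd string function of $\airrr{\mu_{r,s}}^+$. This sign is responsible for the asymmetric factor $(-1)^{r}$ appearing in \eqref{eq:NSschN=1}; once it is correctly accounted for, recognising the $N=1$ (super)characters and assembling \eqref{eq:Rchar} is routine.
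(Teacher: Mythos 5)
Your proposal is correct and follows essentially the same route as the paper: reduce via \R-stringiness and \eqref{eq:strfnsR} to the positive limiting string functions of $\airrr{\mu_{r,s}+\sroot}^-$, convert by conjugation and $\lambda$-independence to the negative limiting string functions of $\airrr{\mu_{r,s}}^+$, and then evaluate these through the Ramond analogue of \eqref{eq:BGG}, \cref{prop:vermasfnosp} and the parity bookkeeping that produces the $(-1)^{nu}$ and $(-1)^r$ signs in \eqref{eq:NSschN=1}. The final assembly into \eqref{eq:Rchar} matches the computation displayed immediately before the theorem in the paper.
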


The character formulae of \cref{thm:NSchosp,thm:Rchosp} reduce to the formulae conjectured in \cite[Props.~13 and 14]{RidAdm17} when $\kk = -\frac{5}{4}$, hence $u=2$ and $v=4$.  In this case, the $N=1$ minimal model is trivial, hence the $N=1$ characters and supercharacters appearing in these \lcnamecrefs{thm:NSchosp} are all $1$.

\begin{remark}
	The relaxed character formulae \eqref{eq:NSchar} and \eqref{eq:Rchar} may be somewhat simplified by expressing the elements of the cosets $\lambda$ and $\lambda + \fwt$ explicitly as $\lambda + 2n \fwt$ and $\lambda + (2n+1) \fwt$, respectively, where $n \in \ZZ$:
	\begin{subequations}
		\begin{align}
			\fch{\arelns{\lambda}{\sigma_{r,s}}}{\zz;\qq} &= \zz^{\lambda} \frac{\chi^{N=1}_{r,s}(\qq)}{\eta(\qq)^2} \sqrt{\frac{\fjth{2}{1;\qq}}{2 \eta(\qq)}} \sum_{n \in \ZZ} (\zz^{\fwt})^n, \\
			\fch{\arelr{\lambda}{q_{r,s}}}{\zz;\qq} &= \zz^{\lambda} \sqbrac*{\frac{\chi^{N=1}_{r,s}(\qq)}{2 \eta(\qq)^2} \sqrt{\frac{\fjth{3}{1;\qq}}{\eta(\qq)}} \sum_{n \in \ZZ} (\zz^{\fwt})^n + \frac{\overline{\chi}^{N=1}_{r,s}(\qq)}{2 \eta(\qq)^2} \sqrt{\frac{\fjth{4}{1;\qq}}{\eta(\qq)}} \sum_{n \in \ZZ} (-\zz^{\fwt})^n}.
		\end{align}
	\end{subequations}
	The corresponding supercharacters are now obtained by replacing each $\zz^{\fwt}$ by $-\zz^{\fwt}$ throughout.
\end{remark}

\appendix

\section{Limiting string functions for Verma modules} \label{app:vermasfn}

In this appendix, we detail the computation of the limiting string function in the case of Verma modules.  The results should be expanded in the region $\abs{\qq} < 1$ in order to recover (generalised) formal power series in $\qq$.
\begin{proposition} \label{prop:vermasfnsl2}
	The limiting string function of the Verma $\asltwo$-module $\aver{\mu}^+$ exists and is
	\begin{equation} \label{eq:verlimsfn}
		\fsfn{-\infty}{\aver{\mu}^+}{\qq} = \frac{\qq^{\Delta + 1/8}}{\eta(\qq)^3},
	\end{equation}
	where $\Delta$ is the conformal weight of the ground states of $\aver{\mu}^+$ and $\eta(\qq)$ is Dedekind's eta function.
\end{proposition}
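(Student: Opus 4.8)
The plan is to compute the full bigraded character of $\aver{\mu}^+$ explicitly and to read off the string functions, deferring the limit to the very end. Since $\aver{\mu}^+$ is induced from the $\sltwo$-Verma module $\fver{\mu}^+$, the \pbw{} theorem makes it free as a module over $\envalg{\asltwo^<}$ on its space of ground states, and $\fver{\mu}^+$ itself has basis $\set{f_0^k v_{\mu} : k \in \ZZ_{\ge 0}}$. Grading the negative modes $e_{-n}, h_{-n}, f_{-n}$ ($n \ge 1$) by their $\sltwo$-weights $\sroot, 0, -\sroot$ and conformal grades $n$, and recording the $f_0$-tower separately, I would first establish
\begin{equation*}
	\fch{\aver{\mu}^+}{\zz;\qq} = \qq^{\Delta} \zz^{\mu} \brac*{\sum_{k \ge 0} \zz^{-k\sroot}} \prod_{n \ge 1} \frac{1}{(1-\zz^{\sroot}\qq^n)(1-\qq^n)(1-\zz^{-\sroot}\qq^n)}.
\end{equation*}

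Writing $w = \zz^{\sroot}$ and letting $\Phi(w,\qq) = \prod_{n \ge 1} \sqbrac{(1-w\qq^n)(1-\qq^n)(1-w^{-1}\qq^n)}^{-1} = \sum_{j \in \ZZ} c_j(\qq) w^j$ denote the Laurent expansion of the product factor (each $c_j(\qq)$ a power series in $\qq$), the string function at weight $\nu = \mu + m\sroot$ is the coefficient of $\zz^{\nu}$, equivalently of $w^m$, in the character. Because the $f_0$-tower contributes $\sum_{k \ge 0} w^{-k}$, i.e.\ non-positive powers of $w$, this coefficient is
\begin{equation*}
	\fsfn{\mu+m\sroot}{\aver{\mu}^+}{\qq} = \qq^{\Delta} \sum_{k \ge 0} c_{m+k}(\qq) = \qq^{\Delta} \sum_{j \ge m} c_j(\qq).
\end{equation*}
Formally sending $m \to -\infty$ resums all the Laurent coefficients into $\qq^{\Delta} \Phi(1,\qq)$, which is the candidate for \eqref{eq:verlimsfn}. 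Conceptually this reflects that, as the weight tends to $-\infty$, the $f_0$-tower fills out an entire $\rlat$-coset and reproduces the dense (relaxed Verma) situation of \cref{sec:stringsl2}, whose string function is already $\qq^{\Delta+1/8}/\eta(\qq)^3$.

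The substance of the proof, and the step I expect to be the only genuine obstacle, is justifying that this formal manipulation is a bona fide limit of generalised formal power series in the sense of \eqref{eq:gfpslim}. For this I would observe that the $w$-degree and the conformal grade of a \pbw{} monomial in the negative modes are linked: within $\Phi$ every generator carrying nonzero $\sltwo$-weight $\pm\sroot$ (namely an $e_{-n}$ or $f_{-n}$, with $n \ge 1$) contributes conformal grade at least $1$. Hence, at any fixed grade $n$, the coefficient of $\qq^n$ in $c_j(\qq)$ vanishes once $\abs{j} > n$, so each $c_j$ has bounded $w$-support at each grade. This already makes $\sum_{j \ge m} c_j(\qq)$ a well-defined power series (a finite sum at each order in $\qq$), and it shows that for each fixed $n$ the partial sum has stabilised, at order $\qq^{\Delta+n}$, as soon as $m \le -n$; its coefficient there is then independent of $m$ and equals the corresponding coefficient of $\Phi(1,\qq)$. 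This is precisely the coefficientwise convergence required by \eqref{eq:gfpslim}, so the limit exists and equals $\qq^{\Delta} \Phi(1,\qq)$. It then remains only to evaluate $\Phi(1,\qq) = \prod_{n \ge 1}(1-\qq^n)^{-3}$ and to rewrite it via $\eta(\qq)^3 = \qq^{1/8} \prod_{n \ge 1}(1-\qq^n)^3$ to recover \eqref{eq:verlimsfn}.
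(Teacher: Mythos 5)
Your proposal is correct and follows essentially the same route as the paper's proof: both isolate the $f_0$-tower as the geometric series $\sum_{k \ge 0} \zz^{-k\sroot}$, use the fact that the remaining product (the universal $\asltwo$ character) has Laurent support bounded by the conformal grade, and conclude that the limiting string function is that product evaluated at $\zz^{\sroot}=1$. The only difference is presentational: the paper phrases the final stabilisation via residues and a formal delta function $\delta(\zz^{\sroot})$, whereas you phrase it as coefficientwise stabilisation of the partial sums $\sum_{j \ge m} c_j(\qq)$, which is the same argument.
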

\begin{proof}
	Recall that the character of a Verma $\asltwo$-module is given by
	\begin{equation} \label{eq:chvermasl2}
		\fch{\aver{\mu}^+}{\zz;\qq} = \frac{\zz^{\mu} \qq^{\Delta}}{\prod_{i=1}^{\infty} (1-\zz^{\sroot}\qq^i) (1-\qq^i) (1-\zz^{-\sroot}\qq^{i-1})}.
	\end{equation}
	As string functions are residues (with respect to $\zz^{\sroot}$) of characters, we may write
	\begin{equation}
		\fsfn{\nu}{\aver{\mu}^+}{\qq} = \res_{\zz^{\sroot}} \fch{\aver{\mu}^+}{\zz;\qq} \, \zz^{-\nu-\sroot}
		= \res_{\zz^{\sroot}} \frac{\zz^{\mu-\nu-\sroot} \qq^{\Delta}}{\prod_{i=1}^{\infty} (1-\zz^{\sroot}\qq^i) (1-\qq^i) (1-\zz^{-\sroot}\qq^{i-1})},
	\end{equation}
	where we may convert the \rhs{} into a generalised formal power series in $\zz$ by expanding in the region $1 < \abs{\zz^{\sroot}} < \abs{\qq}^{-1}$.  We extract the factor $(1-\zz^{-\sroot})$ from the denominator of the above expression and note that what remains has an expansion of the form
	\begin{equation} \label{eq:univcharsl2}
		\frac{1}{\prod_{i=1}^{\infty} (1-\zz^{\sroot}\qq^i) (1-\qq^i) (1-\zz^{-\sroot}\qq^i)} = \sum_{n=0}^{\infty} p_n(\zz^{\sroot}) \qq^n,
	\end{equation}
	where each $p_n$ is a Laurent polynomial whose maximal and minimal degrees are $n$ and $-n$, respectively.  (The reader will no doubt recognise \eqref{eq:univcharsl2} as the character of the level-$\kk$ universal \voa{} of $\asltwo$.)

	Since the expansion region requires that $1 < \abs{\zz^{\sroot}}$, we may replace $(1-\zz^{-\sroot})^{-1}$ by a geometric series, thereby arriving at
	\begin{equation}
		\fsfn{\nu}{\aver{\mu}^+}{\qq} = \qq^{\Delta} \sum_{n=0}^{\infty} \sqbrac*{\res_{\zz^{\sroot}} \sum_{m=0}^{\infty} p_n(\zz^{\sroot}) \zz^{\mu-\nu-(m+1)\sroot}} \qq^n.
	\end{equation}
	Here, we have expressed the string function as a (generalised) power series in $\qq$.  As the minimal power of $\zz^{\sroot}$ in $p_n$ is $-n$, the residue gives no contribution unless $m\sroot \ge \mu-\nu-n\sroot$.  It follows that for every fixed order $n$ in the power series, we may choose $\nu$ sufficiently negative so that all contributions to the residue come from $m \ge 0$.  The limit of the string function as $\nu \to -\infty$, $\nu - \mu \in \rlat$, will therefore not be affected if we allow the sum over $m$ to range over all integers.  Recognising $\sum_{m \in \ZZ} \zz^{-m\sroot} = \delta(\zz^{\sroot})$ as a formal delta function and noting that it allows us replace any instance of $\zz^{\beta}$, with $\beta \in \rlat$, by $1$, we obtain the required expression for the limiting string function:
	\begin{align}
		\fsfn{-\infty}{\aver{\mu}^+}{\qq} &= \lim_{\nu \to -\infty} \qq^{\Delta} \sum_{n=0}^{\infty} \sqbrac*{\res_{\zz^{\sroot}} \sum_{m \in \ZZ} p_n(\zz^{\sroot}) \zz^{\mu-\nu-(m+1)\sroot}} \qq^n \\
		&= \lim_{\nu \to -\infty} \res_{\zz^{\sroot}} \frac{\zz^{\mu-\nu-\sroot} \qq^{\Delta}}{\prod_{i=1}^{\infty} (1-\zz^{\sroot}\qq^i) (1-\qq^i) (1-\zz^{-\sroot}\qq^i)} \delta(\zz^{\sroot}) \notag \\
		&= \frac{\qq^{\Delta}}{\prod_{i=1}^{\infty} (1-\qq^i)^3}. \notag \qedhere
	\end{align}
\end{proof}

\begin{proposition} \label{prop:vermasfnosp}
	The limiting string function of the \ns{} Verma $\aosp$-module $\averns{\mu}^+$ exists and is
	\begin{equation} \label{eq:nsverlimsfn}
		\fsfn{-\infty}{\averns{\mu}^+}{\qq} = \qq^{\Delta + 1/24} \frac{\fjth{2}{1;\qq}}{2 \eta(\qq)^4},
	\end{equation}
	where $\Delta$ is the conformal weight of the ground states of $\averns{\mu}^+$ and $\jth{j}$ denotes the Jacobi theta functions.

	For the Ramond Verma $\aosp$-module $\averr{\mu}^+$, the limiting even and odd string functions exist and are
	\begin{equation} \label{eq:rverlimsfn}
		\frsfn{-\infty}{\pm}{\averr{\mu}^+}{\qq} = \frac{\qq^{\Delta + 1/6}}{2 \eta(\qq)^4} \brac[\big]{\fjth{3}{1;\qq} \pm \fjth{4}{1;\qq}},
	\end{equation}
	where $\Delta$ now denotes the conformal weight of the ground states of $\averr{\mu}^+$.
\end{proposition}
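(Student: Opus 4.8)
The plan is to mirror the strategy of \cref{prop:vermasfnsl2}: write each Verma character as an explicit product, express the string function as a residue, isolate the factors of the character that carry no power of $\qq$, and then use the limit $\nu \to -\infty$ to convert a geometric series into a formal delta function that collapses the surviving product to a $\zz$-independent expression. The required characters follow from the \pbw{} theorem for the relevant triangular decomposition. In the \ns{} sector the grade-zero part is the $\osp$-Verma $\fvero{\mu}^+$, with character $\zz^{\mu}(1+\zz^{-\fwt})/(1-\zz^{-\sroot})$ (here $y_0$ acts freely, with $f_0 = -y_0^2$), while each negative grade contributes one bosonic triple $e_{-i},h_{-i},f_{-i}$ and one fermionic pair $x_{-i},y_{-i}$:
\begin{equation}
	\fch{\averns{\mu}^+}{\zz;\qq} = \frac{\zz^{\mu} \qq^{\Delta} (1+\zz^{-\fwt}) \prod_{i \ge 1} (1+\zz^{\fwt}\qq^i)(1+\zz^{-\fwt}\qq^i)}{(1-\zz^{-\sroot}) \prod_{i \ge 1} (1-\zz^{\sroot}\qq^i)(1-\qq^i)(1-\zz^{-\sroot}\qq^i)}.
\end{equation}
In the Ramond sector the grade-zero part is instead the $\sltwo$-Verma $\fver{\mu}^+$ (there are no $x_0,y_0$), and the odd modes are half-integer:
\begin{equation}
	\fch{\averr{\mu}^+}{\zz;\qq} = \frac{\zz^{\mu} \qq^{\Delta} \prod_{i \ge 1} (1+\zz^{\fwt}\qq^{i-1/2})(1+\zz^{-\fwt}\qq^{i-1/2})}{(1-\zz^{-\sroot}) \prod_{i \ge 1} (1-\zz^{\sroot}\qq^i)(1-\qq^i)(1-\zz^{-\sroot}\qq^i)}.
\end{equation}

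For the \ns{} case the key observation is that, since $\sroot = 2\fwt$, the two factors carrying no power of $\qq$ combine into a single bosonic-type factor:
\begin{equation}
	\frac{1+\zz^{-\fwt}}{1-\zz^{-\sroot}} = \frac{1}{1-\zz^{-\fwt}}.
\end{equation}
Writing the string function as $\res_{\zz^{\fwt}}$ of the character against $\zz^{-\nu-\fwt}$ and expanding $(1-\zz^{-\fwt})^{-1} = \sum_{m \ge 0} \zz^{-m\fwt}$ in the region $\abs{\zz^{\fwt}} > 1$, the remaining product is $\sum_n p_n(\zz^{\fwt})\qq^n$ with each $p_n$ a Laurent polynomial of bounded degree. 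Exactly as in \cref{prop:vermasfnsl2}, at each fixed order in $\qq$ the contributions from $m < 0$ vanish once $\nu$ is sufficiently negative, so the limit $\nu \to -\infty$ permits extending the sum to $m \in \ZZ$; this produces $\delta(\zz^{\fwt})$ and thereby sets $\zz^{\fwt} = 1$ in the surviving product. The result is $\qq^{\Delta} \prod_{i \ge 1} (1+\qq^i)^2/(1-\qq^i)^3$, which is precisely \eqref{eq:nsverlimsfn} after rewriting the product in terms of $\fjth{2}{1;\qq}$ and $\eta(\qq)$.

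The Ramond case proceeds identically except that the lone dangerous factor expands as $(1-\zz^{-\sroot})^{-1} = \sum_{m \ge 0} \zz^{-2m\fwt}$, in \emph{even} powers of $\zz^{\fwt}$, so the limit now yields $\delta(\zz^{\sroot})$ in place of $\delta(\zz^{\fwt})$. Consequently the residue against $\zz^{-\nu-\fwt}$ extracts, in the limit, the sum of the even-power (respectively odd-power) coefficients of the surviving product $N(\zz^{\fwt},\qq)$, according to whether $\nu$ lies in the even coset $\mu + \ZZ\sroot$ or the odd coset $\mu + \fwt + \ZZ\sroot$; these two sums equal $\tfrac{1}{2}\brac{N(1,\qq) \pm N(-1,\qq)}$. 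A direct evaluation gives $N(1,\qq) = \prod_{i \ge 1}(1+\qq^{i-1/2})^2/(1-\qq^i)^3$ and $N(-1,\qq) = \prod_{i \ge 1}(1-\qq^{i-1/2})^2/(1-\qq^i)^3$, and recognising the numerators as $\fjth{3}{1;\qq}$ and $\fjth{4}{1;\qq}$ divided by $\prod_{i \ge 1}(1-\qq^i)$ yields \eqref{eq:rverlimsfn}. The main subtlety, and the only genuine departure from the \ns{} argument, is this parity bookkeeping: one must verify that the even/odd weight cosets of $\averr{\mu}^+$ coincide with its even/odd \emph{parity} gradings, which holds because each half-integer fermion simultaneously shifts the weight by $\fwt$ and flips parity, so that the $\zz^{\fwt} = \pm 1$ evaluations correctly recover the even and odd string functions. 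The interchange of the $\nu \to -\infty$ limit with the order-by-order extraction is justified exactly as before, using the bounded degree of each $p_n$.
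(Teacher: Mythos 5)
Your proposal is correct and follows essentially the same route as the paper: the same product formulae for the Verma characters (yours just peel off the $i=1$ factors of the $\qq^{i-1}$ products explicitly), the same combination $(1+\zz^{-\fwt})/(1-\zz^{-\sroot}) = (1-\zz^{-\fwt})^{-1}$ in the \ns{} sector, and the same limiting replacement of the geometric series by a formal delta function, with your even/odd-coefficient extraction $\tfrac{1}{2}\brac{N(1,\qq) \pm N(-1,\qq)}$ being exactly the paper's $\tfrac{1}{2}\brac{\delta(\zz^{\fwt}) \pm \delta(-\zz^{\fwt})}$ manipulation in the Ramond sector. The final product identities do reduce to \eqref{eq:nsverlimsfn} and \eqref{eq:rverlimsfn} under the stated theta-function conventions, so nothing is missing.
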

\begin{proof}
	The character of a \ns{} Verma $\aosp$-module is
	\begin{equation}
		\fch{\averns{\mu}^+}{\zz;\qq} = \zz^{\mu} \qq^{\Delta} \prod_{i=1}^{\infty} \frac{(1+\zz^{\fwt}\qq^i) (1+\zz^{-\fwt}\qq^{i-1})}{(1-\zz^{\sroot}\qq^i) (1-\qq^i) (1-\zz^{-\sroot}\qq^{i-1})}.
	\end{equation}
	The derivation of \eqref{eq:nsverlimsfn} now mirrors that of \eqref{eq:verlimsfn} except that we extract the factor
	\begin{equation}
		\frac{1+\zz^{-\fwt}}{1-\zz^{-\sroot}} = \frac{1}{1-\zz^{-\fwt}} = \sum_{m=0}^{\infty} \zz^{-m\fwt}.
	\end{equation}
	Again, we check that it is permissible to replace this geometric sum by the formal delta function $\delta(\zz^{\fwt})$ when considering the limiting string function.  The result now follows using standard identities for theta functions (for which we use the conventions of \cite[App.~B]{RidSL208}).

	The character of a Ramond Verma $\raosp$-module is instead
	\begin{equation}
		\fch{\averr{\mu}^+}{\zz;\qq} = \zz^{\mu} \qq^{\Delta} \prod_{i=1}^{\infty} \frac{(1+\zz^{\fwt}\qq^{i-1/2}) (1+\zz^{-\fwt}\qq^{i-1/2})}{(1-\zz^{\sroot}\qq^i) (1-\qq^i) (1-\zz^{-\sroot}\qq^{i-1})}.
	\end{equation}
	This time, we can only extract
	\begin{equation} \label{eq:Rextract}
		\frac{1}{1-\zz^{-\sroot}} = \sum_{m=0}^{\infty} \zz^{-2m\fwt} = \frac{1}{2} \sum_{m=0}^{\infty} \sqbrac[\big]{(\zz^{-\fwt})^m + (-\zz^{-\fwt})^m}
	\end{equation}
	which gets replaced by $\frac{1}{2} \brac*{\delta(\zz^{\fwt}) + \delta(-\zz^{\fwt})}$.  The limiting even string function now follows from the usual manipulations.  To get the limiting odd string functions, we multiply \eqref{eq:Rextract} by $\zz^{-\fwt}$ so that the replacement is instead by $\frac{1}{2} \brac*{\delta(\zz^{\fwt}) - \delta(-\zz^{\fwt})}$.
\end{proof}

\flushleft
\bibliography{relax}

\providecommand{\opp}[2]{\textsf{arXiv:\mbox{#2}/#1}}\providecommand{\pp}[2]{\textsf{arXiv:#1
  [\mbox{#2}]}}
\begin{thebibliography}{10}

\bibitem{FeiEqu98}
B~Feigin, A~Semikhatov, and I~Yu Tipunin.
\newblock Equivalence between chain categories of representations of affine $sl
  \left( 2 \right)$ and {$N = 2$} superconformal algebras.
\newblock {\em J. Math. Phys.}, 39:3865--3905, 1998.
\newblock \opp{9701043}{hep-th}.

\bibitem{FeiRes98}
B~Feigin, A~Semikhatov, V~Sirota, and I~Yu Tipunin.
\newblock Resolutions and characters of irreducible representations of the
  {$N=2$} superconformal algebra.
\newblock {\em Nucl. Phys.}, B536:617--656, 1998.
\newblock \opp{9805179}{hep-th}.

\bibitem{SatEqu16}
R~Sato.
\newblock Equivalences between logarithmic weight modules via {$\mathcal{N}=2$}
  coset constructions.
\newblock \pp{1605.02343}{math.RT}.

\bibitem{SatMod17}
R~Sato.
\newblock Modular invariant representations over the {$\mathcal{N}=2$}
  superconformal algebra.
\newblock Int. Math. Res. Not., to appear,
  \href{https://doi.org/10.1093/imrn/rny007}{\textsf{doi:10.1093/imrn/rny007}}.
  \pp{1706.04882}{math.QA}.

\bibitem{CreN=218}
T~Creutzig, T~Liu, D~Ridout, and S~Wood.
\newblock Non-unitary {$N=2$} minimal models.
\newblock In preparation.

\bibitem{RidRel15}
D~Ridout and S~Wood.
\newblock Relaxed singular vectors, {Jack} symmetric functions and fractional
  level $\widehat{\mathfrak{sl}} \left( 2 \right)$ models.
\newblock {\em Nucl. Phys.}, B894:621--664, 2015.
\newblock \pp{1501.07318}{hep-th}.

\bibitem{AdaVer95}
D~Adamovi\'{c} and A~Milas.
\newblock Vertex operator algebras associated to modular invariant
  representations of {$A_1^{\left(1\right)}$}.
\newblock {\em Math. Res. Lett.}, 2:563--575, 1995.
\newblock \opp{9509025}{q-alg}.

\bibitem{MalStr01}
J~Maldacena and H~Ooguri.
\newblock Strings in {$AdS_3$} and the {$\mathrm{SL} \left( 2 , R \right)$}
  {WZW} model. {I}: The spectrum.
\newblock {\em J. Math. Phys.}, 42:2929--2960, 2001.
\newblock \opp{0001053}{hep-th}.

\bibitem{GabFus01}
M~Gaberdiel.
\newblock Fusion rules and logarithmic representations of a {WZW} model at
  fractional level.
\newblock {\em Nucl. Phys.}, B618:407--436, 2001.
\newblock \opp{0105046}{hep-th}.

\bibitem{RidSL210}
D~Ridout.
\newblock $\widehat{\mathfrak{sl}} \left( 2 \right)_{-1/2}$ and the triplet
  model.
\newblock {\em Nucl. Phys.}, B835:314--342, 2010.
\newblock \pp{1001.3960}{hep-th}.

\bibitem{RidFus10}
D~Ridout.
\newblock Fusion in fractional level $\widehat{\mathfrak{sl}} \left( 2
  \right)$-theories with $k=-\tfrac{1}{2}$.
\newblock {\em Nucl. Phys.}, B848:216--250, 2011.
\newblock \pp{1012.2905}{hep-th}.

\bibitem{CreCos13}
T~Creutzig, D~Ridout, and S~Wood.
\newblock Coset constructions of logarithmic $\left( 1,p \right)$-models.
\newblock {\em Lett. Math. Phys.}, 104:553--583, 2014.
\newblock \pp{1305.2665}{math.QA}.

\bibitem{AdaRea16}
D~Adamovi\'{c}.
\newblock A realization of certain modules for the {$N=4$} superconformal
  algebra and the affine {Lie} algebra {$A_2^{(1)}$}.
\newblock {\em Transform. Groups}, 21:299--327, 2016.
\newblock \pp{1407.1527}{math.QA}.

\bibitem{AraWei16}
T~Arakawa, V~Futorny, and L-E Ramirez.
\newblock Weight representations of admissible affine vertex algebras.
\newblock {\em Comm. Math. Phys.}, 353:1151--1178, 2017.
\newblock \pp{1605.07580}{math.RT}.

\bibitem{ZhuMod96}
Y~Zhu.
\newblock Modular invariance of characters of vertex operator algebras.
\newblock {\em J. Amer. Math. Soc.}, 9:237--302, 1996.

\bibitem{CreLog13}
T~Creutzig and D~Ridout.
\newblock Logarithmic conformal field theory: beyond an introduction.
\newblock {\em J. Phys.}, A46:494006, 2013.
\newblock \pp{1303.0847}{hep-th}.

\bibitem{RidVer14}
D~Ridout and S~Wood.
\newblock The {Verlinde} formula in logarithmic {CFT}.
\newblock {\em J. Phys. Conf. Ser.}, 597:012065, 2015.
\newblock \pp{1409.0670}{hep-th}.

\bibitem{CreRel11}
T~Creutzig and D~Ridout.
\newblock Relating the archetypes of logarithmic conformal field theory.
\newblock {\em Nucl. Phys.}, B872:348--391, 2013.
\newblock \pp{1107.2135}{hep-th}.

\bibitem{CreWAl11}
T~Creutzig and D~Ridout.
\newblock W-algebras extending $\widehat{\mathfrak{gl}} \left( 1 \middle\vert 1
  \right)$.
\newblock {\em Springer Proceedings in Mathematics and Statistics},
  36:349--368, 2011.
\newblock \pp{1111.5049}{hep-th}.

\bibitem{VerFus88}
E~Verlinde.
\newblock Fusion rules and modular transformations in {2D} conformal field
  theory.
\newblock {\em Nucl. Phys.}, B300:360--376, 1988.

\bibitem{HuaVer08}
Y-Z Huang.
\newblock Vertex operator algebras and the {Verlinde} conjecture.
\newblock {\em Comm. Contemp. Math.}, 10:103--154, 2008.
\newblock \opp{0406291}{math.QA}.

\bibitem{CreMod12}
T~Creutzig and D~Ridout.
\newblock Modular data and {Verlinde} formulae for fractional level {WZW}
  models {I}.
\newblock {\em Nucl. Phys.}, B865:83--114, 2012.
\newblock \pp{1205.6513}{hep-th}.

\bibitem{CreMod13}
T~Creutzig and D~Ridout.
\newblock Modular data and {Verlinde} formulae for fractional level {WZW}
  models {II}.
\newblock {\em Nucl. Phys.}, B875:423--458, 2013.
\newblock \pp{1306.4388}{hep-th}.

\bibitem{RidAdm17}
D~Ridout, J~Snadden, and S~Wood.
\newblock An admissible level $\widehat{\mathfrak{osp}} \left( 1 \middle\vert 2
  \right)$-model: modular transformations and the {Verlinde} formula.
\newblock {\em Lett. Math. Phys.}, 108:2363--2423, 2018.
\newblock \pp{1705.04006}{hep-th}.

\bibitem{CreCos18}
T~Creutzig, S~Kanade, T~Liu, and D~Ridout.
\newblock Cosets, characters and fusion for the admissible-level
  $\mathfrak{osp} \left( 1 \middle\vert 2 \right)$ minimal models.
\newblock \pp{1806.09146}{hep-th}.

\bibitem{RidBos14}
D~Ridout and S~Wood.
\newblock Bosonic ghosts at $c=2$ as a logarithmic {CFT}.
\newblock {\em Lett. Math. Phys.}, 105:279--307, 2015.
\newblock \pp{1408.4185}{hep-th}.

\bibitem{SemEmb97}
A~Semikhatov and V~Sirota.
\newblock Embedding diagrams of {$N=2$} {Verma} modules and relaxed
  $\widehat{sl} \left( 2 \right)$ {Verma} modules.
\newblock \opp{9712102}{hep-th}.

\bibitem{AdaRea17}
D~Adamovi\'{c}.
\newblock Realizations of simple affine vertex algebras and their modules: the
  cases $\widehat{sl(2)}$ and $\widehat{osp(1,2)}$.
\newblock \pp{1711.11342}{math.QA}.

\bibitem{SemInv94}
A~Semikhatov.
\newblock Inverting the {Hamiltonian} reduction in string theory.
\newblock In {\em 28th International Symposium on Particle Theory,
  Wendisch-Rietz, Germany}, pages 156--167, 1994.
\newblock \opp{9410109}{hep-th}.

\bibitem{AdaLie07}
D~Adamovi\'{c}.
\newblock {Lie} superalgebras and irreducibility of {$A_1^{(1)}$}-modules at
  the critical level.
\newblock {\em Comm. Math. Phys.}, 270:141--161, 2007.
\newblock \opp{0602181}{math.QA}.

\bibitem{KacMod88}
V~Kac and M~Wakimoto.
\newblock Modular invariant representations of infinite-dimensional {Lie}
  algebras and superalgebras.
\newblock {\em Proc. Nat. Acad. Sci. USA}, 85:4956--4960, 1988.

\bibitem{KacMod88b}
V~Kac and M~Wakimoto.
\newblock Modular and conformal invariance constraints in representation theory
  of affine algebras.
\newblock {\em Adv. Math.}, 70:156--236, 1988.

\bibitem{MatCla00}
O~Mathieu.
\newblock Classification of irreducible weight modules.
\newblock {\em Ann. Inst. Fourier (Grenoble)}, 50:537--592, 2000.

\bibitem{MazLec10}
V~Mazorchuk.
\newblock {\em Lectures on {$\mathfrak{sl}_2 \left( \mathbb{C}
  \right)$}-Modules}.
\newblock Imperial College Press, London, 2010.

\bibitem{IohFus01a}
K~Iohara and Y~Koga.
\newblock Fusion algebras for {$N=1$} superconformal field theories through
  coinvariants {I}: $\widehat{osp}\left(1\middle\vert2\right)$-symmetry.
\newblock {\em J. reine angew. Math.}, 531:1--34, 2001.

\bibitem{ArnCas97}
D~Arnaudon, M~Bauer, and L~Frappat.
\newblock On {Casimir}'s ghost.
\newblock {\em Comm. Math. Phys.}, 187:429--439, 1997.
\newblock \opp{9605021}{q-alg}.

\bibitem{RidSL208}
D~Ridout.
\newblock $\widehat{\mathfrak{sl}} \left( 2 \right)_{-1/2}$: A case study.
\newblock {\em Nucl. Phys.}, B814:485--521, 2009.
\newblock \pp{0810.3532}{hep-th}.

\bibitem{WooAdm18}
S~Wood.
\newblock Admissible level $\mathfrak{osp} \left( 1 \middle\vert 2 \right)$
  minimal models and their relaxed highest weight modules.
\newblock \pp{1804.01200}{math.QA}.

\end{thebibliography}
\bibliographystyle{unsrt}

\end{document}